\documentclass[a4paper,12pt]{article}
\usepackage{amsmath, amsthm, amsfonts, amssymb, bbm}
\usepackage{graphicx, psfrag}
\usepackage[usenames,dvipsnames]{xcolor}
\usepackage{graphicx}
\usepackage{ulem}
\usepackage{cite}
\usepackage{hyperref}
\usepackage{stackrel}
\usepackage{framed,color}
\definecolor{shadecolor}{rgb}{0.93, 0.79, 0.69}
\usepackage[utopia]{mathdesign}
\usepackage{mathtools}
\usepackage{mhchem}
\usepackage{gensymb}

\definecolor{columbiablue}{rgb}{0.61, 0.87, 1.0}
\definecolor{mycolor}{rgb}{.1, .7, 1}
\usepackage[margin=0.9in]{geometry}

\usepackage{amsmath,lineno,centernot}
\usepackage{soul}
\usepackage{enumerate}
\usepackage{titlesec}
\usepackage{cancel}

\usepackage{ulem}

\usepackage{mathtools}

\newcommand{\eps}{\epsilon}
\newtheorem*{acknowledgements}{Acknowledgements}
\newcommand{\und}{\underline}

\usepackage{tikz}
\usetikzlibrary{arrows,decorations.pathmorphing,backgrounds,positioning,fit,petri}
\usetikzlibrary{shapes}
\usetikzlibrary{arrows.meta}

\DeclareMathAlphabet{\mathpzc}{OT1}{pzc}{m}{it}

\newtheorem{proposition}{Proposition}[section]
\newtheorem{theorem}[proposition]{Theorem}
\newtheorem{lemma}[proposition]{Lemma}
\newtheorem{definition}[proposition]{Definition}
\newtheorem{corollary}[proposition]{Corollary}

\newtheorem{cla}[proposition]{Claim}

\newtheorem{remark}[proposition]{Remark}

\numberwithin{equation}{section}
\definecolor{verdescuro}{RGB}{0,120,0}

\allowdisplaybreaks

\title{Non-equilibrium density fluctuations for \\stirring process with weak boundary dynamics}
\author{P. Birmpa \thanks{Department of Actuarial Mathematics \& Statistics, Heriot-Watt University and the Maxwell Institute for Mathematical Sciences, Mayfield Rd, Edinburgh EH9 3FD, UK, Email {\tt P.Birmpa@hw.ac.uk }} \and P. Gon\c calves\thanks{Instituto Superior T\'ecnico, Department of Mathematics, University of Lisbon, Av. Rovisco Pais 1, 1049-001, Lisbon. E-mail: {\tt pgoncalves@tecnico.ulisboa.pt}}
\and D. Tsagkarogiannis\thanks{Universit\`a dell'Aquila, DISIM, 67100, L'Aquila.  E-mail: {\tt dimitrios.tsagkarogiannis@univaq.it}}}

\date{}

\begin{document}
\maketitle
\abstract{We consider the one-dimensional stirring process on the segment 
$\{-N,\ldots,N\}$, coupled to boundary dynamics that inject particles from the
right reservoir and remove particles from the left reservoir, each acting on a window
of size $K$. We investigate the non-equilibrium fluctuations of the system, starting
from a product measure associated with a smooth initial profile. 
The limit Ornstein-Uhlenbeck process is given in terms of the Laplacian and gradient operators acting on test functions satisfying time-dependent Robin boundary conditions  that are obtained by linearizing the boundary conditions in the hydrodynamic equation.
A central ingredient
in our analysis is the derivation of sharp bounds on the space and space--time
$v$-functions of arbitrary degree for the centered occupation variables;  these  bounds were proved in the companion article \cite{birmpa2025nonequilibriumfluctuationsstirringprocess}.}

\section{Introduction}
The derivation of macroscopic equations of classical fluid mechanics from 
a microscopic system of interacting particles
is a long-standing problem in mathematical physics. In order to simplify this
challenge, a common approach is to consider random models from statistical mechanics, such as interacting
particle systems (IPS). These were introduced into the mathematics community by
Spitzer \cite{Spi}, although they had already been widely used in physics and
biophysics as microscopic stochastic models whose dynamics conserve certain quantities.
The basic assumption is that each particle behaves as a continuous-time random walk on a discretization of the macroscopic space using a scaling parameter $0<\epsilon:=N^{-1}$, which will eventually be sent to zero to connect microscopic and macroscopic descriptions.

{\textbf{The model under investigation.}} In this article we focus on the exclusion process, also called the stirring process: the particles move in the system, by exchanging their positions with their neighbors, after a mark of the Poisson process. We note that the number of particles is conserved under this dynamics.  We supplement this exchange dynamics with boundary dynamics that injects and removes particles in the system within a space window of size $K$. Note that the superposition of this dynamics with the exchange dynamics, destroys the conservation law of the system, though the number of particles is the quantity that we want to analyze. 
We also note  that this dynamics was first introduced in \cite{de2011current} but the boundary dynamics was slowed down with respect to  the exchange dynamics by a factor of $\eps$ and later the same dynamics slowed by a factor $\eps^\theta$, with $\theta>0$ 
were also explored in \cite{erignoux2020hydrodynamics,erignoux2019hydrodynamics}.
\bigskip
 
{\textbf{The hydrodynamic limit and the fluctuations around it.}} 
The hydrodynamic limit consists of describing the space-time evolution of the density of the particle in the system. To address this, one considers the empirical measure of the density defined in \eqref{MedEmp}, which consists of a random measure that associates the weight $\eps$ to every occupied site of the system and zero to the others. 
We denote by $\eta_t(x)$ the number of particles at site $x$ and time $t\eps^{-2}$ and if the system starts from a measure $\mu^\epsilon$ we denote 
the discrete profile by $\rho^\epsilon_t(x)=\mathbb{E}^{\epsilon}_{\mu^\epsilon}[\eta_{t}(x)]$. From Kolmogorov's forward equation, we get  $\partial_t \rho^\epsilon_t(x)=\mathbb E_{\mu^\epsilon}^\epsilon[L_\epsilon(\eta_{t}(x))]$, where $L_\eps$ is the generator of our Markov process.  
We observe that since the boundary dynamics defined above depends on the configuration $\eta$ at the boundary points within a window of size $K$, the previous discrete PDE for points in the boundary involves the expectation of terms given by products of $\eta$'s. Hence, in order to close it in terms of $\rho_t^\eps(x)$, we have to consider an auxiliary function that solves the same evolution equation as $\rho_t^\eps(x)$ but at the points in the boundary window we assume that the variables involved in the product are uncorrelated. This simple observation allows factorizing the expectation of those products into products of the corresponding $\rho$'s.   The advantage in considering the  function $\rho_\epsilon (x,t)$ win comparison to $\rho_t^\eps(x)$ is that the former solves a closed evolution equation while the latter depends on higher-order
moments.

  In a similar fashion, our goal in this article is to understand the behavior of fluctuations around the hydrodynamical profile $\rho_t$, solution of the hydrodynamic equation. To that end we consider the density fluctuation field which is the empirical measure properly centered and rescaled, see \eqref{density field} for the exact definition.
 In \cite{de2011current} the authors assumed that the function $\rho_{\epsilon}(\cdot,0)$ converges weakly as $\epsilon\to 0$ to a macroscopic profile  $u_0(\cdot )\in L^{\infty}([-1,1],[0,1])$ and deduced the hydrodynamic limit by showing that  $\rho_{\epsilon}(x,t)$ and $\rho_t$ are close in the sup norm, where the function $\rho_t$ is the solution of the heat equation with some boundary conditions, see Theorem 
 \ref{hydrodynamiclimit} for a precise statement. We note that  some years later, the same model but taking the boundary dynamics slowed down with respect to the exchange dynamics with a factor $\eps^\theta$, with $\theta>0$ was studied in \cite{erignoux2020hydrodynamics,erignoux2019hydrodynamics}. There, the authors derived the hydrodynamic limit when $\theta\geq 1$ using the entropy method developed by \cite{guo1988nonlinear} and when $\theta<1$ using the method of propagation of chaos, the same method used by \cite{de2011current} to establish the result in the case $\theta=1$. 
 In the same article, the authors were also able to show that $\rho_\eps(x,t)$ and $\rho_t^\eps(x)$ are close with respect to the sup-norm. This last result was obtained as a consequence of a much stronger result which was related to correlation estimates rather than the entropy method. 
Unfortunately, these estimates are not enough in order to prove the behaviour of the fluctuations around the hydrodynamic limit due to the different (weaker) scaling. A significant effort had to be invested in \cite{birmpa2025nonequilibriumfluctuationsstirringprocess} in order to improve these bounds and render them amenable to the case of the fluctuations.   We note that the relative entropy method of \cite{jara2018non} has been shown to be powerful for conservative particle systems, but its direct application to our model presents structural obstructions. The  scheme of \cite{jara2018non} requires local transition rates, since the entropy production is computed explicitly from the adjoint generator and relies on this locality. However, our  boundary rates have a range of order $K$ and  increase the degree of polynomials in $\eta$ at the boundary window, placing our model outside the class for which the  entropy estimates of \cite{jara2018non} are designed. It would be an interesting problem to check whether the methodology of \cite{jara2018non} could be extended to our case, and  we leave this as a future problem.

 In order to properly formalize our problem, we consider the density fluctuation field which is nothing but the integral of a test function $H$ with respect to the empirical measure properly centered and rescaled, see \eqref{density field} for the exact definition. 
We show that when the system starts from a product measure whose marginals are given in terms of a smooth profile, see {\textbf{Assumption 1}}, the sequence of density fields converges to the solution of an Ornstein-Uhlenbeck process given in \eqref{O.U.}.  This choice of initial condition is essential because it allows us to invoke the estimates from \cite{de2012truncated}, which are applicable only when the system starts from a product measure. If we had similar a priori estimates for the space of $v$-functions starting from general initial conditions, our results could also be less restrictive.
 The strategy of the proof is based on associating the Dynkin's martingales to the density fluctuation fields. Those will be close to the martingales appearing in the definition of the corresponding Ornstein-Uhlenbeck process, see \eqref{eq:martingale_problem}. Showing that these martingales are close is one of the biggest challenges of this work and it is based on refined bounds on space $v$-functions but also on space-time $v$-functions, as we have explained before, together with a proper choice of the space of test functions, that we explain now.
 
 \bigskip

{\textbf{The choice of the space of test functions.}}  Our density fluctuation field acts on functions that are $C^\infty$ in space. The regularity in time is needed so that we can apply Dynkin's formula to obtain a sequence of martingales associated to the density field and the regularity in space is needed since the hydrodynamic equation is the heat equation and we expect the limit field to satisfy the Ornstein-Uhlenbeck equation whose characteristic operators are the Laplacian and the gradient. Moreover, due to the action of the boundary dynamics on the conserved quantity, we obtain a contribution given in terms of products of occupation variables, see the last two lines of \eqref{intpartofmart_2}. The exchange dynamics also contributes with boundary terms at the end points $N$ and $-N$, and those correspond to the second line of \eqref{intpartofmart_2}. After centering these terms coming from the two contributions, we split them into degree one and higher-order degree. 

The degree-one boundary terms cancel
since the test functions satisfy  appropriate time-dependent Robin boundary
conditions; while the remaining
degree-one contributions vanish by our refined estimates on the space and space-time $v$-functions, as well as the  convergence of the discrete
hydrodynamic profile.

\bigskip

{\textbf{Future problems.}} We conclude by collecting a few interesting problems, closely related to the ones investigated
in this article, that remain open for further study.
A first natural question concerns the derivation of the non-equilibrium fluctuations for more general initial conditions as, for example, the non-equilibrium stationary state (NESS). It is  easy to check that the NESS is not of product form, and given the assumptions that we impose on our initial measure our result does not include the fluctuations from the NESS. A second direction is to analyze our results in the case the system is put in contact with slow/fast boundary dynamics and to explore what the dependence is on the strength of the dynamics in the $v$-functions estimates. A third line of investigation concerns the extension of our methods to other boundary
mechanisms. For instance, it would be interesting to treat boundary dynamics that drive the system to the heat equation with non-linear Robin boundary conditions, such as in the model recently  studied in \cite{landimmangisalvador}.

\begin{acknowledgements}
\small{The research of P.G. is partially funded by Fundação para a Ciência e Tecnologia (FCT), Portugal, through grant No. UID/4459/2025, as well as the ERC/FCT SAUL project. D.T. acknowledges financial support from the Italian Research Funding Agency
(MIUR) through the PRIN project “Emergence of condensation-like phenomena in interacting particle systems: kinetic and lattice models”, grant n. 202277WX43, as well as from Istituto Nazionale di Alta Matematica (INDAM).
The authors thank the warm hospitality of the University of Minho and Instituto Superior Técnico in Portugal, GSSI in Italy and the Maxwell institute for Mathematical Sciences, Edinburgh, UK, where parts of this work were developed. The authors are also indebted to Anna De Masi and Errico Presutti for many insightful and clarifying discussions.}
\end{acknowledgements}

%\section{Notations}

%\textcolor{red}{We should introduce the notation for $\lesssim$. We should put a note about constants, when  they depend on time and if we need to integrate in time or not... also fix notations for expectations: here we keep the same from De Masi's papers $\mathbb E_{\epsilon}[\cdot]$ and also for probabilities with respect to stirring}
\section{Statement of results}
\label{sec:1}

\subsection{The model}
\label{sec:model}

Let $N \in \mathbb{N}$ represent the scaling parameter, which we will consider in the limit  $N \to \infty$. Fix an arbitrary integer $1 \leq K \leq N$. We examine the stirring process on the discrete subspace $\Lambda_N := [-N, N] \cap \mathbb{Z}$. Particle configurations are represented by elements $\eta \in \Omega_N := \{0,1\}^{\Lambda_N}$, where $\eta(x) = 1$ indicates the presence of a particle at site $x$, and $\eta(x) = 0$ otherwise. The stirring process, incorporating births and deaths, is a Markov process $\{\eta_{t}\}_{t \geq 0}$ fully characterized by its infinitesimal generator given by 
      \begin{equation}
        \label{eq:gen_total}
L_\epsilon:=\epsilon^{-2}\big(L_0+\epsilon L_{b,-}+\epsilon L_{b,+}\big), \quad \epsilon\equiv \frac 1N,
     \end{equation}
where $L_0$ is the generator of the stirring process in $\Lambda_N$, namely
      \begin{equation}
        \label{eq:gen_stir}
 L_0 f(\eta):=\frac 12\sum_{x=-N}^{N-1} [f(\eta^{(x,x+1)})-f(\eta)]
     \end{equation}
and 
      \begin{equation}
        \label{eq:gen_bound}
L_{b,\pm} f(\eta):= \frac{j}{2}
\sum_{x\in I_\pm}D_{\pm}\eta(x) [f(\eta^{(x)})-f(\eta)\Big],
\end{equation}
with $j>0$, $I_+=[N-K+1,N]$, $I_-=[-N,-N+K-1]$,
\begin{equation}\label{eq:boundary_operators}
\begin{split}
& D_+u (x)= [1-u(x)]u(x+1)u(x+2)\cdots u(N), \; x\in I_+,
       \\
& D_-u (x)=  u(x)[1-u(x-1)][1-u(x-2)]\cdots[1-u(-N)], \;  x\in I_-\,.
\end{split}
   \end{equation}
   Note that  above  $D_-u(-N)=u(-N)$ and $D_+u(N)=1-u(N)$.
  
      Note that the generator includes a factor of $\epsilon^{-2}$, indicating that the process is considered on the diffusive time scale. In \eqref{eq:gen_bound}, $\eta^{x,x+1}$, with $-N \leq x \leq N-1$, represents the configuration obtained from $\eta$ by exchanging the occupation variables $\eta(x)$ and $\eta(x+1)$:
\[\eta^{(x,x+1)} (y) =
\begin{cases}
\eta(x+1),& y=x\;,\\
\eta(x), & y=x+1\;,\\
\eta(y), & y\neq x,x+1\;,\\
\end{cases}
\]
while $\eta^{(x)}$, for $x\in I_\pm $, is  the configuration obtained by flipping the
occupation variable $\eta(x)$ :
\[
\eta^{(x)}(y) =
\begin{cases}
\eta(y),& y\neq x\;, \\
1-\eta(y), & y=x\,.\\
\end{cases}
\]

The operator $N^2L_0$ is the generator of the stirring process, where neighboring particles exchange positions at rate $N^2/2$. The operator $NL_{b,\pm}$ represents the generator of a birth (resp. death) process: at rate $jN/2$, a particle is created (resp. destroyed) at the
rightmost empty site (resp. leftmost occupied site), see the figure below.

\begin{figure}[htb]
\begin{center}
\begin{tikzpicture}[thick, scale=0.85][h!]
\draw[latex-] (-6.5,0) -- (6.5,0) ;
\draw[-latex] (-6.5,0) -- (6.5,0) ;
\foreach \x in  {-6,-5,-4,-3,-2,-1,0,1,2,3,4,5,6}
\draw[shift={(\x,0)},color=black] (0pt,0pt) -- (0pt,-3pt) node[below] 
{};

%%%%%%%%%%%%%%%partÌculas azuis%%%%%%%%%%%%%%%%%%%%%
       \node[ball color=violet, shape=circle, minimum size=0.7cm] (U) at (-6.,1.2) {};
       \node[ball color=violet, shape=circle, minimum size=0.7cm]  at (-6.,0.4) {};
       \node[ball color=violet, shape=circle, minimum size=0.7cm] (R) at (6.,0.4) {};
         \node[ball color=violet, shape=circle, minimum size=0.7cm] (S) at (6.,1.2) {};
         \node[ball color=violet, shape=circle, minimum size=0.7cm] (T) at (6.,2.0) {};
        
%%%%%%%%%%%%%%%%partÌculas brancas%%%%%%%%%%%%%%%%%%%%%%%%         
        \node[shape=circle,minimum size=0.7cm] (Q) at (-6.,2) {};
        \node[shape=circle,minimum size=0.7cm] (P) at (-5.,1.2) {};
           \node[shape=circle,minimum size=0.7cm] (PP) at (-5.,0.4) {};
        \node[shape=circle,minimum size=0.7cm] (M) at (-6.,1.2) {};
        \node[shape=circle,minimum size=0.7cm] (N) at (-2.,1.2) {};
        \node[shape=circle,minimum size=0.7cm] (J) at (2.,0.2) {};
            \node[shape=circle,minimum size=0.7cm] (JJ) at (4,0.4) {};
    
%%%%%%%%%%%%%partÌculas pretas%%%%%%%%%%%%%%%%%%%%%%%%%%%%
\node[ball color=black!30!, shape=circle, minimum size=0.7cm] (C) at (-5,0.4) {};

\node[ball color=black!30!, shape=circle, minimum size=0.7cm] (C) at (-3,0.4) {};    
    
\node[ball color=black!30!, shape=circle, minimum size=0.7cm] (C) at (0,0.4) {};

\node[ball color=black!30!, shape=circle, minimum size=0.7cm] (D) at (1,0.4) {};

\node[ball color=black!30!, shape=circle, minimum size=0.7cm] (L) at (3.,0.4) {};

\node[ball color=black!30!, shape=circle, minimum size=0.7cm] (H) at (5.,0.4) {};

%%%%%%%%%%%%%%%%%%indices%%%%%%%%%%%%%%%%%%%%%%%%%%%%%%
\draw[] (-5,-0.2) node[below] {\footnotesize{$-N$}};
\draw[] (-4,-0.2) node[below] {\footnotesize{$\cdots$}};
\draw[] (-3,-0.2) node[below] {\footnotesize{$x-1$}};
\draw[] (-2,-0.2) node[below] {\footnotesize{$x$}};
%\draw[] (-5,-0.2) node[below] {\footnotesize{$1$}};

\draw[] (5,-0.2) node[below] {\footnotesize{$N$}};
\draw[] (4,-0.2) node[below] {\footnotesize{$N-1$}};
\draw[] (3,-0.2) node[below] {\footnotesize{$N-2$}};

\node[shape=circle,minimum size=0.7cm] (Q) at (-6.,2.0) {};
\node[shape=circle,minimum size=0.7cm] (W) at (3,2.0) {};

setinhas
\path [->] (PP) edge[bend right=60] node[above] {$\tfrac{jN}{2}$} (U);
\path [->] (T) edge[bend right=60] node[above] {$\tfrac{jN}{2}$} (JJ);
\path [->] (C) edge[bend left =60] node[] {$\times$} (D);

\path [->] (D) edge[bend left =60] node[above] {$\tfrac{N^2}{2}$} (J);
              
\node[shape=circle,minimum size=0.7cm] (K) at (0,0.4) {};
\node[shape=circle,minimum size=0.7cm] (G) at (2,0.4) {};
\node[shape=circle,minimum size=0.7cm] (E) at (-1.,0.5) {};
\node[shape=circle,minimum size=0.7cm] (EE) at (-3.,0.5) {};  
\end{tikzpicture}
\end{center}
\end{figure}
\subsection{Hydrodynamic limit}
\label{subsec:2}
In this section, we review the hydrodynamic limit of our model, as developed in \cite{de2011current,erignoux2020hydrodynamics}. The corresponding hydrodynamic equation is the linear heat equation on the ``macroscopic'' interval $(-1,1)$, subject to {Robin} boundary conditions at $\pm 1$. Before presenting the result, we introduce several key definitions. 

We denote by  $\| \cdot\|_{L^2  }$ the $\mathbb L^2$-norm  with respect to the Lebesgue measure in $[-1,1]$ and by $\langle \cdot,\cdot\rangle$ the corresponding inner-product.   For some fixed time horizon  $T>0$, we denote by $C^{m,n}([0, T] \times [-1,1])$ the set of functions defined on $[0, T] \times [-1,1] $ that are $m$ times differentiable on the first variable and $n$ times differentiable  on the second variable, {with} continuous derivatives. Now we define the Sobolev space $\mathcal H^1$ on $[-1,1]$. For that purpose, we define the semi inner-product $\langle \cdot, \cdot \rangle_{1}$ on the set $C^{\infty} ([-1,1])$ by $\langle G, H \rangle_{1} =\langle \partial_u  G \,, \partial_u  H \rangle$ 
and  the corresponding semi-norm is denoted by $\| \cdot \|_{1}$. 

\begin{definition}
\label{Def. Sobolev space}
The Sobolev space $\mathcal{H}^{1}$ on $[-1,1]$ is the Hilbert space defined as the completion of $C^\infty ([-1,1])$ for the norm 
$\| \cdot\|_{{\mathcal H}^1}^2 :=  \| \cdot \|_{L^2}^2  +  \| \cdot \|^2_{1}.$
Its elements coincide a.e. with continuous functions. The space $L^{2}(0,T;\mathcal{H}^{1})$ is the set of measurable functions $f:[0,T]\rightarrow  \mathcal{H}^{1}$ such that 
$\int^{T}_{0} \Vert f_{s} \Vert^{2}_{\mathcal{H}^{1}}ds< \infty. $
\end{definition}
\begin{definition}
	\label{def:weak_sol_ Robin}
	{Let $\rho_0:[-1,1]\rightarrow [0,1]$ be a measurable function.} We say that  $\rho:[0,T]\times[-1,1] \to [0,1]$ is a weak solution of the heat equation with Robin boundary conditions 
	\begin{equation}\label{eq:Robin_equation}
	\begin{cases}
	&\partial_{t}\rho(u,t)= \frac{1}{2} \partial^2_u\, {\rho}(u,t), \quad (t,u) \in [0,T]\times(-1,1),\\
	&\partial_u\rho(-1,t)=\widetilde D_{-}(\rho(-1,t))
,\quad t \in (0,T]\\ &\partial_u \rho(1,t)=\widetilde D_+(\rho(1,t))
	,\quad t \in (0,T] \\
	&\rho(\cdot,0)= \rho_0(\cdot),
	\end{cases}
	\end{equation}
where 
	\begin{equation}\label{tildeD}
	\widetilde  D_-(\rho)= j (1-(1-\rho)^K)\quad \textrm{and}\quad  \widetilde  D_+(\rho)= j(1-\rho^K)\end{equation}
	if the following two conditions hold: 
	\begin{enumerate}
		\item $\rho \in L^{2}(0,T;\mathcal{H}^{1})$, 
		
		\item $\rho$ satisfies the weak formulation:
		\begin{equation*}\label{eq:Robin_integral}
		\begin{split}
		&\langle \rho_{t},  f_{t}\rangle  -\langle \rho_0 , f_{0} \rangle - \int_0^t\langle \rho_{s},\Big(\frac 12 \partial^2_u + \partial_s\Big) f_{s} \, \rangle {ds}  
		+ \frac 12\int^{t}_{0}  \rho_{s}(1) \partial_u f_{s}(1)-\rho_{s}(-1)  \partial_u f_{s}(-1)  \, ds\\&
		 - \frac 12\int^{t}_{0}f_{s}(1)\widetilde D_+(\rho(1,s))
		ds
		+ \frac 12\int_0^t f_{s}(-1) \widetilde D_-(\rho(-1,s))ds=0,
		\end{split}   
		\end{equation*}
	for all $t\in [0,T]$, any function $f \in C^{1,2} ([0,T]\times[-1,1])$. 
	\end{enumerate}
	\end{definition}

	%\pat{For $K=2$ the equation \eqref{eq:Robin_equation} becomes
	%\begin{equation}\label{eq:Robin_equation_2}
	%\begin{cases}
	%&\partial_{t}\rho_{t}(u)= \frac{1}{2} \partial^2_u\, {\rho} _{t}(u), \quad (t,u) \in [0,T]\times(0,1),\\
	%&\partial_u\rho(t,-1)=j(1-(1-\rho(t,-1))^2)
%,\quad t \in [0,T]\\ &\partial_u \rho(t,1)=j(1-%\rho(t,1)^2),\quad t \in [0,T] \\
%	&\rho(0,\cdot)= \rho_0(\cdot),
%	\end{cases}
%	\end{equation}	
%	}
	\begin{remark}
	We warn the reader not to confuse the functions  $\widetilde D_\pm$ defined just above with the operators defined in \eqref{eq:boundary_operators}. 
	\end{remark}

 \begin{remark}
     We observe that the weak solution of the PDE given above is unique. We refer the reader to the proof in \cite{FGN_TAMS} for the case $K=1$ and \cite{erignoux2020hydrodynamics} for the other cases.
    \end{remark}

Observe that  the above equation coincides with the case $K=1$ of \cite{franco2019non} by taking there  $\alpha=0$ and $\beta=1$ and $j=1$ and in that case the Robin boundary conditions are linear.

Now we state the hydrodynamic limit of the process $\{\eta_{{t}}\}_{t\geq{0}}$. For that purpose,  let ${\mathcal  M}^+$ be the space of positive measures on $[-1,1]$ with total mass bounded by $2+\epsilon$ equipped with the weak topology. For any configuration  $\eta \in \Omega_{N}$ we define the empirical measure ${\pi^{\epsilon}(\eta,\cdot)\in{\mathcal  M}^+}$ on $[-1,1]$ as
\begin{equation}\label{MedEmp}
\pi^{\epsilon}(\eta, du)=\epsilon\sum _{x\in \Lambda_{N}}\eta(x)\delta_{\epsilon x}\left( du\right),
 \end{equation}
where $\delta_{a}$ is a Dirac mass on $a \in [-1,1]${. Given the trajectory $\{\eta_{{t}}\}_{t\geq{0}}$ of the \textit{accelerated} process, we further introduce 
$\pi^{\epsilon}_{t}(\eta,du):=\pi^{\epsilon}(\eta_{t}, du)$ the empirical measure at the macroscopic time $t$.
We denote by $\mathbb P _{\mu^\epsilon}$ the probability measure in the Skorohod space $\mathcal D([0,T], \Omega_N)$ induced by the  Markov process $\{\eta_{{{t}}}\}_{t\geq{0}}$ and the initial probability measure $\mu^\epsilon$ and  $\mathbb E^\epsilon _{\mu^\epsilon}$ denotes the expectation w.r.t. $\mathbb P_{\mu^\epsilon}$.  

\begin{theorem}
\label{th:hyd_ssep}
Let $\rho_0:[-1,1]\rightarrow[0,1]$ be a measurable function and let $\lbrace\mu^\epsilon\rbrace_{\epsilon}$ be a sequence of probability measures in $\Omega_{N}$ such that  for any continuous function $f:[-1,1]\rightarrow \mathbb{R}$  and every $\delta > 0$ 
\begin{equation}\label{eq:mea_associated}
  \lim _{\epsilon\to 0 } \mu^\epsilon \Big( \eta \in \Omega_{N} : \Big|\int_{-1}^1 f(u){\pi^\epsilon}(\eta,du) - \int_{-1}^1 f(u)\rho_{0}(u)du\Big|    > \delta \Big)= 0.\end{equation} Then, for any $t\in[0,T]$,
\begin{equation*}\label{limHidreform}
 \lim _{\epsilon\to 0} \mathbb P_{\mu^{\epsilon}}\Big( \Big|\int_{-1}^1 f(u) \pi_t^\epsilon(\eta, du) - \int_{-1}^1 f(u)\rho(u,t)du\Big|   > \delta \Big)= 0,
\end{equation*}
where  $\rho(\cdot,t)$ is the unique weak solution  of \eqref{eq:Robin_equation}.
\end{theorem}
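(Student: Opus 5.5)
We prove Theorem~\ref{th:hyd_ssep} with the entropy/martingale method of \cite{guo1988nonlinear}, as in \cite{erignoux2020hydrodynamics} for $\theta=1$. The structure is: tightness, identification of limit points as weak solutions of \eqref{eq:Robin_equation}, and uniqueness (the remark above). Let $\mathbb Q^\epsilon$ be the law of $\{\pi^\epsilon_t\}_{t\in[0,T]}$ on $\mathcal D([0,T],\mathcal M^+)$ under $\mathbb P_{\mu^\epsilon}$.

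\textbf{Step 1: Dynkin martingales and tightness.} For $f\in C^{1,2}([0,T]\times[-1,1])$, Dynkin's formula gives the martingale
\[
M^\epsilon_t(f)=\langle \pi^\epsilon_t,f_t\rangle-\langle\pi^\epsilon_0,f_0\rangle-\int_0^t(\partial_s+L_\epsilon)\langle\pi^\epsilon_s,f_s\rangle\,ds .
\]
Apply $\epsilon^{-2}L_0$ to $\epsilon\sum_x f(\epsilon x)\eta(x)$ and sum by parts. This gives $\langle\pi^\epsilon_s,\frac12\Delta_\epsilon f\rangle$ together with the boundary terms $\frac12[\eta(N)\nabla^+_\epsilon f(N)-\eta(-N)\nabla^-_\epsilon f(-N)]$ up to $O(\epsilon)$. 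The boundary generators contribute
\[
\frac j2\sum_{x\in I_+}f(\epsilon x)D_+\eta(x)-\frac j2\sum_{x\in I_-}f(\epsilon x)D_-\eta(x).
\]
Each of these terms has $O(1)$ rate, because $\epsilon^{-2}\cdot\epsilon\cdot\epsilon=1$. The quadratic variation is $O(\epsilon)$: the bulk exchange part is $\epsilon^{-2}\cdot\epsilon^2\cdot\epsilon^2\cdot N$, and the boundary part is $\epsilon^{-1}\cdot\epsilon^2\cdot K$. Hence $M^\epsilon(f)\to0$ in $L^2$ by Doob's inequality. All integrand terms are uniformly bounded, so Aldous' criterion gives tightness of $\mathbb Q^\epsilon$. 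Limit points are concentrated on trajectories absolutely continuous with respect to Lebesgue measure with density in $[0,1]$, by exclusion.

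\textbf{Step 2: Replacement at the boundary (main obstacle).} To identify the limit we must replace $\eta(\pm N)$ and the products $D_\pm\eta(x)$ by functions of the macroscopic densities $\rho_s(\pm1)$. The targets are $\rho^K$-type expressions: summing $D_+$ over $I_+$ telescopes to $1-\prod_{x\in I_+}\eta(x)$, which should become $1-\rho(1)^K$, and similarly on the left. I would use the Dirichlet form estimate $\int_0^t\langle -L_\epsilon f_s,f_s\rangle$ relative to a slowly varying product reference measure. Since the boundary generator is only $\epsilon^{-1}$ times bulk-order, the entropy production is $O(N)$, which suffices for one- and two-block estimates near the boundary. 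The replacement lemma then has three parts:
\begin{itemize}
\item replace $\eta(x)$, for $x$ in the fixed window $I_\pm$, by the average $\bar\eta^{\epsilon L}(\pm N)$ over a box of size $\epsilon L N$;
\item replace the product $\prod_{x\in I_+}\eta(x)$ by $(\bar\eta^{\epsilon L}(N))^K$;
\item show the errors vanish as $\epsilon\to0$ and then $L\to0$.
\end{itemize}
For the first part one moves particles site-by-site with the exchange dynamics; the cost is controlled by $\epsilon^{-2}$ times the Dirichlet form, as in \cite{FGN_TAMS,franco2019non}. The product replacement needs a decoupling argument: one shifts the $K$ sites successively to distant disjoint boxes and uses local equilibrium. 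This is the delicate step, since the reference measure is not invariant because of the boundary flips. Alternatively, for $\theta=1$ one may invoke propagation of chaos as in \cite{de2011current}. That route shows $\sup_x|\rho^\epsilon_t(x)-\rho_\epsilon(x,t)|\to0$ and that $v$-functions of degree $\ge2$ vanish, which factorizes the boundary products directly.

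\textbf{Step 3: Identification and conclusion.} The boundary averages converge to $\rho_s(\pm1)$ by $\mathcal H^1$-regularity. This regularity comes from an energy estimate showing $\rho\in L^2(0,T;\mathcal H^1)$, obtained via the usual variational bound $\sup_G\{\int\langle\pi,\partial_uG\rangle-c\|G\|^2\}<\infty$. With the replacements of Step 2, every limit point satisfies the weak formulation in Definition~\ref{def:weak_sol_ Robin}. The initial condition holds by \eqref{eq:mea_associated}. Uniqueness of the weak solution then gives convergence of the whole sequence $\mathbb Q^\epsilon$ to $\delta_\rho$. Since the limit is deterministic and continuous in time, convergence in probability at each fixed $t$ follows.
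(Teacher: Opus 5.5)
Your outline is the standard entropy-method proof: Dynkin martingales with vanishing quadratic variation, boundary replacement via the $O(N)$ entropy-production/Dirichlet-form bound and disjoint macroscopic boxes, an energy estimate, and uniqueness. This is exactly the route of Theorem~2.8 in \cite{erignoux2020hydrodynamics}, which the paper cites rather than reproving. Two small corrections. First, the boundary averages must be taken over boxes of macroscopic size $LN$; with $\epsilon=1/N$, your ``$\epsilon LN$'' is a microscopic box of size $L$. Second, the propagation-of-chaos alternative based on \cite{de2011current} relies on $v$-function estimates available only for product initial measures, so it does not cover the general $\mu^\epsilon$ allowed in the statement.
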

Last theorem corresponds to Theorem 2.8 of \cite{erignoux2020hydrodynamics}. 
We observe that in last theorem the initial measure can be quite generic (not necessarily product) as long as it satisfies \eqref{eq:mea_associated} i.e. to be  associated to some profile. 
The next theorem, derived in  \cite{de2011current}, also provides the hydrodynamic limit for the stirring process with births and deaths. It is analogous to the previous one, but written in a different fashion. We decided to include both since some of these results will be needed in our proofs. For $x\in\Lambda_N$  and $t\geq 0$, let us denote 
\begin{equation}\label{eq:exp_eta}
\rho^\epsilon_t(x)=\mathbb{E}^{\epsilon}_{\mu^\epsilon}[\eta_{t}(x)].\end{equation}
From Kolmogorov's forward equation, the function $\rho^\epsilon_t(x)$ satisfies the evolution equation  $\partial_t \rho^\epsilon_t(x)=\mathbb E_{\mu^\epsilon}^\epsilon[L_\epsilon(\eta_{t}(x))]$, see \eqref{eq:kolm_disc_prof}.  Note that the boundary generators increase the degree of polynomial functions of $\eta$ therefore, we do not get a closed equation for $\partial_t \rho^\epsilon_t(x)$. To that end, we introduce the function $\rho_\epsilon (x,t)$ 
solution to
\begin{equation}\label{eq:linearized}
	\begin{cases}
	&\partial_t\rho_\epsilon(x,t)= \frac 12 \Delta_\epsilon\rho_\epsilon(x,t)+
\epsilon^{-1} \frac j2\Big(\mathbf 1_{x\in I_+} D_+\rho_\epsilon(x,t)
-\mathbf 1_{x\in I_-}  D_-\rho_\epsilon(x,t)\Big)\\
	&{\rho_\epsilon(x,0)=u_0(\epsilon x)},
	\end{cases}
	\end{equation}
where $\Delta_\epsilon=\epsilon^{-2}\Delta$, $\Delta$
the discrete Laplacian in $\Lambda_
N$ with reflecting boundary conditions:
    \begin{eqnarray}\label{eq:disc_laplacian}
&&\Delta u(x)=   u(x+1)+ u(x-1)-2u(x),\qquad |x|<\epsilon^{-1}
\\&&\Delta u(\pm N)= u(\pm(N-1))-u(\pm N).
         \label{def_laplacian}
        \end{eqnarray}

\begin{theorem}\cite [Theorem 1] {de2011current}\label{hydrodynamiclimit}
Suppose that the initial datum $\rho_{\epsilon}(\cdot,0)$ defined on $\Lambda_{N}$, and taking values in $[0,1]$ converges weakly as $\epsilon\to 0$ to $u_0(\cdot)\in L^{\infty}([-1,1],[0,1])$ in the sense
\begin{equation}\label{initial}
\lim_{\epsilon\to 0}\epsilon\sum_{x\in\Lambda_{N}}\rho_{\epsilon}(x,0)\phi(\epsilon x)=\int_{[-1,1]}u_0(r)\phi(r)dr,\quad\text{for every}\;\phi\in L^{\infty}([-1,1],\mathbb{R}).
\end{equation}
Then, there is $\rho(x,t)$, $x\in[-1,1]$, $t>0$ so that for any $t_1>t_0>0$
\begin{equation}\label{eq:hydro_conv}
\lim_{\epsilon\to 0}\sup_{x\in\Lambda_N}\sup_{t_0\leq t\leq t_1}|\rho_{\epsilon}(x,t)-\rho(\epsilon x,t)|=0,
\end{equation}
where the function $\rho(u,t)$  is the unique solution of the integral equation
\begin{eqnarray}\label{integralformofequation}
\rho(u,t)&=&\int_{[-1,1]}P_{t}(u,r')u_0(r')dr'+\frac{j}{2}\int_0^t P_{s}(u,1)(1-\rho(1,t-s)^{K}) \nonumber\\
&&-P_{s}(u,-1)(1-(1-\rho(-1,t-s))^{K})ds
\end{eqnarray}
and $P_{t}(u,r')$ is the density kernel of the semigroup (also denoted as $P_t$) with generator
$\Delta/2$, $\Delta$ the Laplacian in $[-1, 1]$ with reflecting, Neumann, boundary conditions.
\end{theorem}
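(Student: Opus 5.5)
The plan is to follow the propagation-of-chaos strategy: compare the discrete linearized profile $\rho_\epsilon(x,t)$ with a discrete Duhamel representation, and then pass to the limit in that representation to obtain the integral equation \eqref{integralformofequation}.

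\textbf{Step 1: discrete Duhamel formula.} Let $P^\epsilon_t$ denote the semigroup generated by $\frac12\Delta_\epsilon$, with the reflecting boundary conditions \eqref{eq:disc_laplacian}; its kernel $P^\epsilon_t(x,y)$ is the transition probability of a random walk on $\Lambda_N$ jumping at rate $\epsilon^{-2}/2$ with reflection. Equation \eqref{eq:linearized} can then be written as
\begin{equation*}
\rho_\epsilon(x,t)=\sum_y P^\epsilon_t(x,y)u_0(\epsilon y)+\frac{j}{2}\epsilon^{-1}\int_0^t\sum_{y\in I_+}P^\epsilon_s(x,y)D_+\rho_\epsilon(y,t-s)\,ds-\frac{j}{2}\epsilon^{-1}\int_0^t\sum_{y\in I_-}P^\epsilon_s(x,y)D_-\rho_\epsilon(y,t-s)\,ds.
\end{equation*}
A maximum-principle argument shows that $\rho_\epsilon$ stays in $[0,1]$. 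The reason is that $D_+$ is nonnegative and vanishes when $\rho_\epsilon(x)=1$, while $D_-$ vanishes when $\rho_\epsilon(x)=0$.

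\textbf{Step 2: compactness.} The terms $\epsilon^{-1}\sum_{y\in I_\pm}P^\epsilon_s(x,y)$ are bounded by $CK s^{-1/2}$ by the local CLT, which is integrable in $s$. Together with standard heat-kernel gradient estimates (Hölder regularity in $(\epsilon x,t)$ on $t\ge t_0$), this gives equicontinuity of $\rho_\epsilon(\epsilon^{-1}\cdot,\cdot)$ on $[-1,1]\times[t_0,t_1]$. By Arzelà–Ascoli, every subsequence has a further subsequence converging uniformly there.

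\textbf{Step 3: identifying the limit.} Since $K$ is fixed, the sites of $I_+$ sit within $K\epsilon$ of $1$. The plan is therefore to show that, inside the window, $\rho_\epsilon(y,t)$ is uniformly close to $\rho_\epsilon(N,t)$. Granting this, $D_+\rho_\epsilon(y)$ is close to $(1-\rho)\rho^{N-y}$, and summing over $y\in I_+$ gives the telescoping sum
\begin{equation*}
\sum_{k=0}^{K-1}(1-\rho)\rho^k=1-\rho^K.
\end{equation*}
Similarly, the $D_-$ terms sum to $1-(1-\rho)^K$. Also $\epsilon^{-1}P^\epsilon_s(x,y)\to P_s(u,1)$ for $\epsilon x\to u$ and $y$ near $N$. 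Passing to the limit in the Duhamel formula therefore yields \eqref{integralformofequation}. The weak convergence \eqref{initial} handles the first term for $t>0$, because $P_t$ is smoothing. Finally, uniqueness of solutions to the integral equation follows from a Gronwall argument using the Lipschitz nonlinearity and the integrable singularity $s^{-1/2}$. Uniqueness upgrades subsequential convergence to full convergence, which is \eqref{eq:hydro_conv}.

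\textbf{Main obstacle.} The hardest step is the near-constancy of $\rho_\epsilon$ across the $K$-window. The difficulty is that the boundary forcing is of size $\epsilon^{-1}$, so gradients of order $\epsilon^{-1}$ in macroscopic units, i.e. $O(1)$ per lattice site, are a priori possible. I would try to bound the microscopic differences $|\rho_\epsilon(y+1,t)-\rho_\epsilon(y,t)|$ by $C\epsilon$, uniformly for $t\ge t_0$. To do so, I would differentiate the Duhamel formula, using $|\nabla P^\epsilon_s|\lesssim \epsilon s^{-1}$ in lattice units and the bound $\epsilon^{-1}\cdot\epsilon\int s^{-1}\ldots$. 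If the resulting singularity is not integrable, I would instead work with the equation for the discrete gradient near the boundary. That equation is a linear system with bounded coefficients driven by $O(1)$ sources, from which one can derive an $O(\epsilon)$ bound.
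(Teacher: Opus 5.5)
The paper does not prove this theorem; it quotes it from \cite{de2011current}, so there is no in-paper argument to compare against. Your architecture is the standard route and is sound in outline: Duhamel formula with the reflecting-walk kernel, Arzel\`a--Ascoli, identification of the limit, and uniqueness by a Gronwall argument with kernel $s^{-1/2}$. The Gronwall step is cleanest when applied to the closed Volterra system satisfied by the traces $\rho(\pm1,\cdot)$. Two small corrections. First, the Duhamel formula must start from $\rho_\epsilon(y,0)$, not $u_0(\epsilon y)$, since only \eqref{initial} is assumed. Second, you do not need near-constancy of $\rho_\epsilon$ to produce $1-\rho^K$, because $\sum_{y\in I_+}D_+u(y)=1-\prod_{y\in I_+}u(y)$ holds exactly for every $u$. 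You still need it, however, to get $\prod_{y\in I_+}\rho_\epsilon(y,\cdot)\to\rho(1,\cdot)^K$.

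The genuine gap is in the step you single out as the main obstacle, which as written does not close. Your bound $|\nabla P^\epsilon_s|\lesssim\epsilon s^{-1}$ is off by a factor $\epsilon$. With microscopic time $\tau=\epsilon^{-2}s$, the correct bound is $|P^\epsilon_s(x+1,y)-P^\epsilon_s(x,y)|\lesssim(1+\tau)^{-1}$, that is, $\epsilon^2/s$ for $s\ge\epsilon^2$ and at most $1$ below. With your bound the $s$-integral is $\int_0 s^{-1}ds$ and diverges. With the correct one,
\begin{equation*}
\epsilon^{-1}\sum_{y\in I_+}\int_0^t\big|P^\epsilon_s(x+1,y)-P^\epsilon_s(x,y)\big|\,ds\lesssim K\epsilon^{-1}\int_0^t\frac{ds}{1+\epsilon^{-2}s}\lesssim K\epsilon\log(1/\epsilon),
\end{equation*}
and the initial-datum term contributes $\lesssim\epsilon t_0^{-1/2}$. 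Hence $\sup_{t\ge t_0}\sup_x|\rho_\epsilon(x+1,t)-\rho_\epsilon(x,t)|\lesssim\epsilon\log(1/\epsilon)$. This is the estimate the paper quotes as Corollary~\ref{corollary: difference of rhos}, with the logarithm absorbed into $\epsilon^{-2\zeta}$. You do not get $O(\epsilon)$ this way, and you do not need it.

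Your fallback is not a substitute. In the equation for $\rho_\epsilon(x+1,t)-\rho_\epsilon(x,t)$, the source is $\epsilon^{-1}\frac j2\big(D_+\rho_\epsilon(x+1,t)-D_+\rho_\epsilon(x,t)\big)$. It is of order $\epsilon^{-1}$ and is not small in the gradient: for a constant profile $\rho$ the bracket equals $\rho^{N-x-1}(1-\rho)^2$. So there is no ``bounded coefficients, $O(1)$ sources'' structure to exploit.

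Finally, the same kernel estimate at macroscopic separation $h=m\epsilon$, splitting the integral at $s=h^2$, gives $|\rho_\epsilon(x,t)-\rho_\epsilon(x',t)|\lesssim h(1+\log(1/h))$ uniformly up to $x=\pm N$. This modulus is exactly what Step~2 needs. So Step~2 and your ``obstacle'' are one and the same estimate, and Step~2 as written assumes what you later call the hardest part.
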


\begin{remark}
    We  clarify now the relationship between the weak formulation given in Definition \ref{def:weak_sol_ Robin} and the integral formulation given in Theorem \ref{hydrodynamiclimit}. The latter is the mild formulation of the former. Indeed, the mild formulation of the hydrodynamic equation is given in \eqref{integralformofequation}.
The equivalence between the weak and the mild formulations follows standard arguments  based on testing the weak formulation against a regularized backward Neumann heat kernel, see e.g. Section 5 of \cite{erignoux2020hydrodynamics}. Starting from the weak solution, the bulk terms cancel because the heat kernel solves the backward heat equation, while the Neumann boundary condition on the kernel leaves only the nonlinear boundary fluxes. On the other hand, starting from the mild formulation and using the same heat-kernel identities we obtain the weak formulation. Hence, throughout the paper, the hydrodynamic profile appears either as the weak solution or the mild solution depending on the context.
\end{remark}
\begin{remark}
The assumptions in Theorems \ref{th:hyd_ssep} and \ref{hydrodynamiclimit} concern different initial measures. In the former, the initial condition is imposed on the random empirical measure associated with the microscopic configuration sampled from $\mu^\varepsilon$. In the latter the assumption is imposed on the deterministic initial datum of the auxiliary closed equation \eqref{eq:linearized}, namely $\rho^\varepsilon(\cdot,0)$. Under \textbf{Assumption 1} these two statements are  compatible, if the initial law is a product measure with marginals $\rho^\varepsilon _0(x)=u_0(\varepsilon x)$, then the empirical measure is associated with $u_0$; while  $\rho^\varepsilon(\cdot,0)$ converges to $u_0$, when $\epsilon\to 0$. 
\end{remark}
\begin{remark}\label{rem:smoothness}
We note that the solution $\rho(x,t)$ of \eqref{integralformofequation} is smooth. This has been observed in Remark 2.4 of \cite{de2011current}.
\end{remark}
Our goal in this article is to understand how is the behavior of the fluctuations around the hydrodynamical profile.

\subsection{Fluctuations}

In this subsection, we present our results regarding the non-equilibrium fluctuation of the stirring process with births and deaths, and  the convergence of the density fluctuation field given in Definition \ref{def:Density fluctuation field} to a mean zero  generalized Ornstein-Uhlenbeck process. These results are detailed in Theorem \ref{thm:non_eq_flu}. To begin, we recall from \cite{birmpa2017non} the definition of the space of test functions on which the functional associated with the system’s density fluctuations acts, as well as the semigroup associated with the hydrodynamic limit.
\subsubsection{The space of test functions}
{Let $\mathbb S=C^\infty([-1,1])$, endowed with its usual Fr\'echet
topology generated by the seminorms
\begin{equation}\label{semi-norm}
\|H\|_k=\sup_{u\in[-1,1]}|\partial_u^kH(u)|,
\qquad k\in\mathbb N\cup\{0\},
\end{equation}
and let $\mathbb S'$ denote its topological dual.} 
\begin{definition}\label{def1} 
Fix a solution $\rho(u,t)$  of the hydrodynamic equation \eqref{integralformofequation}   (which by Remark \ref{rem:smoothness} is a smooth function) and for $t\in(0,T]$, let $\mathbb D_t
$ denote the  subspace of functions in $C^{\infty}([-1,1])$ that satisfy  \begin{equation}\label{eq:bc_test_function}
\begin{split}
 &\partial_u H(-1)=\widetilde D'_-(\rho(-1,t))H(-1), \\&\partial_u H(1)=\widetilde D_+'(\rho(1,t))H(1),\end{split}
 \end{equation}
 where $\widetilde D_\pm$ were defined in \eqref{tildeD}.
\end{definition}
\begin{remark}
    The choice of test functions is dictated by the cancellation of the degree-one boundary terms in the Dynkin's martingale. The exchange dynamics produces boundary contributions involving discrete gradients  of the test function at boundary points, while the birth-death mechanism produces linear terms obtained by linearizing the nonlinear hydrodynamic profile. These terms cancel precisely when the test function satisfies the Robin boundary conditions given above. This will be clear ahead in Section \ref{sec: Auxiliary martingale}.
\end{remark}
   
{We now fix a hydrodynamic profile $\rho(u,s)$ solution of
\eqref{eq:Robin_equation}. In what follows, we will need to consider
the operator $T_{t,s}$, for $0\leq s\leq t\leq T$, associated with the
linear parabolic problem obtained by linearizing the boundary conditions
of the hydrodynamic equation. For $t\in[0,T]$ fixed and $H\in\mathbb S$,
define $T_{t,s}H$ as the solution at time $s$ of}
\begin{equation}\label{eq:Robin_equation_line}
	\begin{cases}
	&\partial_{s}T_{t,s}H (u)+ \frac{1}{2} \partial^2_u\, T_{t,s}H(u)=0, \quad (s,u) \in [0,t]\times(-1,1),\\
	&\partial_u T_{t,s}H(-1)=j K(1-\rho(-1,s))^{K-1}T_{t,s}H(-1)
,\quad s \in [0,t)\\ &\partial_u T_{t,s}H(1)=-jK\rho(1,s)^{K-1}T_{t,s}H(1)
	,\quad s\in [0,t) \\
	&T_{t,t}H(\cdot)= H(\cdot),
	\end{cases}
	\end{equation}
   Observe that above the boundary conditions can be rewritten as
	\begin{equation*}
\partial_u T_{t,s}H(-1)=\widetilde D'_-(\rho(-1,s))T_{t,s}H(-1)
,\quad \partial_u T_{t,s}H(1)=\widetilde D'_+(\rho(1,s))T_{t,s}H(1).
	\end{equation*}

   \begin{remark}
       Since the hydrodynamic profile $\rho$ is smooth  and bounded, the functions $s\mapsto \tilde D_-'(\rho(-1,s))$ and $s\mapsto \tilde  D_+'(\rho(1,s))$ are bounded and continuous. Therefore \eqref{eq:Robin_equation_line} is a linear non-autonomous parabolic problem with time-dependent Robin boundary conditions. Standard parabolic theory yields a unique solution which is smooth for $0\leq s<t$.
   \end{remark}

\subsubsection{The density fluctuation field}

We first introduce some notation. Recall  that we denote by $\mathbb P _{\mu^\epsilon}$ the probability measure in the Skorohod space denoted by $\mathcal D([0,T], \Omega_N)$ induced by the  Markov process $\{\eta_{t}\}_{t\geq{0}}$ and the initial probability measure $\mu^\epsilon$ and  $\mathbb E^\epsilon _{\mu^\epsilon}$ denotes the expectation with respect to $\mathbb P_{\mu^\epsilon}$ and that $
\rho^\epsilon_t(x)=\mathbb{E}^{\epsilon}_{\mu^\epsilon}[\eta_{t}(x)]$.

\begin{definition}[Density fluctuation field]\label{def:Density fluctuation field}
We define the density fluctuation field $  Y_\cdot^\epsilon$ as the time-trajectory of linear functionals acting on functions $H\in\mathbb S$ as
\begin{equation}\label{density field}
Y^\epsilon _t(H)\;=\;\sqrt \epsilon\sum_{x\in\Lambda_N}H(\epsilon x)
\Big(\eta_t(x)-\rho^\epsilon_t(x)\Big).
\end{equation}
\end{definition}
Note that the fluctuation field defined above is a random distribution
acting on smooth spatial test functions. However, in the martingale
formulation we shall also evaluate the field along time-dependent test
functions $H_s\in\mathbb D_s$.

\bigskip

\noindent
For each $\epsilon>0$, let $\mathbb{Q}^{\epsilon}_{\mu^{\epsilon}}$ be the probability measure on $\mathcal{D}([0,T],\mathbb{S}')$ induced by the
density fluctuation field $Y_{\cdot}^{\epsilon}$
and the measure $\mu^{\epsilon}$. 
For every $n$ positive integer,
\begin{equation}\label{eq:correlation}
C_t^{n,\epsilon}(x_{1},\dots,x_{n}):=\mathbb{E}^{\epsilon}_{\mu^\epsilon}\Big[\Big(\eta_t(x_1)-\rho^\epsilon_t(x_1)\Big)\dots\Big(\eta_t(x_n)-\rho^\epsilon_t(x_n)\Big)\Big]
\end{equation}

Moreover, we assume that   \begin{center}
{\bf Assumption 1.} The initial measure $\mu^{\epsilon}$ is a product measure that satisfies 
\begin{equation}\label{Ass1}
\rho^\epsilon_0(y)=\mathbb E^\eps_{\mu^{\epsilon}}[\eta(y)]=u_0(\epsilon y), \textrm{\;for all\;} y\in \Lambda_N.
\end{equation}
\end{center} 
Above  $u_0:[-1,1]\to[0,1]$ is the initial condition of  \eqref{eq:linearized} and we assume that it is  smooth. 
In contrast to the weaker assumption on the initial state given in \eqref{initial}, last assumption is essential for proving our results, i.e., Theorems \ref{thm:vestimate2} and \ref{thm:vestimate3}.
\begin{remark}\label{rem:v_1_null}
    From \textbf{Assumption 1}, we conclude that $\rho_0^\eps(y)=\rho_\eps(y,0)$ for all $y\in \Lambda_N.$
\end{remark}
One of the consequences of this assumption is that we are able to control  the discrete gradient of the solution of \eqref{eq:linearized} as stated in Corollary \ref{corollary: difference of rhos} and whose proof can be seen in \cite{birmpa2025nonequilibriumfluctuationsstirringprocess}.
To establish the  estimates for various space–time dependent quantities in this work, we must address two main difficulties with respect to previous works.  The first one is to show that those quantities are of order $\eps$ or $\eps^{1+\zeta}$, $\zeta>0$.  This is needed in order to have a closure of the equation and identify the limit.  The second is to retain a time exponent so that the resulting expressions remain integrable after integrating with respect to time. By imposing {\textbf{Assumption 1}}, the second difficulty can be resolved, providing a time exponent strictly greater than $-1$ and thus integrable close to 0. The same assumption is imposed in \cite{de1989weakly} and in Theorem 2 of \cite{de2011current} and in Theorem 2.1 in \cite{de2012non}.

    %\begin{lemma}\label{lem: difference of rhos}
%Let {$x\in \{-N,\dots, N-1\}$} and $0\leq t\leq T$. Under {\bf Assumption 1}, {for any $\eps<1$, } there exists a constant $C$ such that 
%\begin{eqnarray}
%|\rho_\epsilon(x,t)-\rho_\epsilon(x+1,t)|&\leq&{c_T\epsilon}+\epsilon^{-1}\frac{j}{2}\int_{0}^t d\lambda \frac{1}{\sqrt{\epsilon^{-2}\lambda}+1}\sum_{z\in I_\pm}G_{\epsilon^{-2}\lambda}(x,z).
%\end{eqnarray}
%\end{lemma}
%Above $G_{\epsilon^{-2}\lambda}(x,z)$ denotes the heat kernel given by   
%$G_t (x,y)=   \frac{e^{- (x-y)^2/2t}}{\sqrt{2\pi t}}.$
   For $H,G\in\mathbb S$  and $t\in[0,T]$,   we denote the norm  $\|\cdot\|_{L^2(\rho_t)}$ associated to the inner product defined by 
\begin{equation}\label{eq:norm}
\begin{split}
 \langle H,G\rangle_{L^2(\rho_t)}=&\int_{-1}^1\chi(\rho(u,t)) \partial_u H(u) \partial_u G(u)du \\
  +&\frac 12\widetilde D_+(\rho(1,t))H(1)G(1)+ \frac 12\widetilde D_-(\rho(-1,t))H(-1)G(-1),
\end{split}
\end{equation}
where $\rho_t(u)$ is the solution of the hydrodynamic equation and $\chi(u)=u(1-u)$, moreover, we define  the operator \[
\mathcal{P}_t(H)
:=
\|H\|_{L^2(\rho_t)}^2.
\]

Note that above the $\mathbb L^2$-norm is written with respect to a measure $\nu_t$ given by 
\[
\nu_t(du)
:=
\chi(\rho(u,t))\,du
+\frac{1}{2}\tilde D_{+}(\rho(1,t))\,\delta_{1}(du)
+\frac{1}{2}\tilde D_{-}(\rho(-1,t))\,\delta_{-1}(du).
\]

\begin{definition}\label{def:OU}
Let $\rho_t$ be the solution of the hydrodynamic equation \eqref{eq:Robin_equation}. We say that a stochastic process ${Y}$ in the space of continuous distributions $\mathcal{C}([0, T], \mathbb{S}')$   is the  solution of  the equation:
\begin{equation} \label{O.U.}
dY_t
=
\frac{1}{2}\partial_u^2 Y_t\,dt
+
\sqrt{\mathcal{P}_t}\,dW_t,
\end{equation}
where $ W_t$ is a space-time white noise of unit variance, if:
\begin{enumerate}[i)] 
    \item  for every
$H\in C^{1,\infty}([0,T]\times[-1,1])$ such that
$H_s\in\mathbb D_s$ for every $s\in(0,T)$, the stochastic processes
 $\{{M}_t(H), t\in[0, T]\}$ and $\{{N}_t(H), t\in[0, T]\}$ given by
    \begin{equation}\label{eq:martingale_problem}
        \begin{split}
        &{M}_t(H):={Y}_t(H_t)-{Y}_0(H_0)-\int_0^t{Y}_s\left(\partial_s H_s+\frac 12\partial_u^2 H_s\right)ds,
        \\&{N}_t(H):={M}_t(H)^2-\int_0^t\left\| H_s\right\|_{L^{2} (\rho_s)}^2 ds
        \end{split}
    \end{equation}
    are $\mathcal{F}_t$-martingales, where $\mathcal{F}_t:=\sigma\{{Y}_s(H): s\le t, H\in\mathbb{S}\}$,
    \item ${Y}_0$ is a mean-zero Gaussian field with covariance denoted  by
    \begin{equation}\label{eq:Y_covariance}
        \mathbb E\left[{Y}_0(H){Y}_0(G)\right]=:\sigma(H,G)
    \end{equation} for $H,G\in\mathbb S$.
\end{enumerate}

\end{definition}

Now we comment about the uniqueness in law of the solution of the martingale problem given above. From the proof of Theorem \ref{thm:non_eq_flu}, which involves a compactness argument, we will prove the existence of a process that solves that martingale problem. In order to conclude the full convergence result we need to establish the uniqueness in law of such process. We do not present the complete proof of the uniqueness but we outline the strategy to be pursued, which follows similar steps to those of the proof of Proposition 5.1 of \cite{franco2019non}.  The idea is to frame the PDE \eqref{eq:Robin_equation_line} as a regular problem in the classical Sturm-Liouville theory. This is possible given the fact that the solution of the hydrodynamic equation is a continuous function of time, as discussed in Section  2.4, formula (12), in \cite{de2011current}. We can then obtain a similar result to that of Lemma  5.2 of \cite{franco2019non} which is a consequence of Proposition 3.1 and Corollaries 3.2 and 3.3 of \cite{franco2019non}. We leave the details to the reader.

\begin{theorem}[Non-equilibrium fluctuations: the Ornstein-Uhlenbeck limit]\label{thm:non_eq_flu}
\quad

The sequence $\{\mathbb{Q}^{\epsilon}_{\mu^{\epsilon}} \}_{\epsilon}$ converges, as $\epsilon\to 0$, in $\mathcal D([0,T],\mathbb S')$, to a generalized Ornstein-Uhlenbeck process  solution of \eqref{O.U.}. As a consequence, the covariance of the limit field ${Y}_t$ is given on $H,G\in{\mathbb  S}$ by
\begin{equation}\label{covariance non eq limit field}
 E\,[{Y}_t(H){Y}_s(G)]\;=\;\int_{-1}^1\chi(u_0(u)) T_{t,0}H(u)T_{s,0}G(u) du+\int_0^s\langle  T_{t,r} H,  T_{s,r}G\rangle_{L^2(\rho_r)}dr,
\end{equation} for $ s\leq t$,
where   $T_{t,r}H$ (resp. $T_{s,r}G$) solves \eqref{eq:Robin_equation_line} with terminal condition $H$ (resp. $G$),  $\chi(u)=u(1-u)$ and 
 for $r>0$  and  $H,G\in\mathbb S$ the inner product $\langle H_r,G_r\rangle_{ L^2(\rho_r)}$ was defined above and  $\rho(u,r)$ is the solution of the hydrodynamic equation.
\end{theorem}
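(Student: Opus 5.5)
The proof follows the classical martingale scheme: tightness, identification of limit points through the martingale problem of Definition \ref{def:OU}, and uniqueness (outlined above via Sturm--Liouville theory following \cite{franco2019non}).

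\textbf{Step 1: Dynkin martingales.} Fix $H\in C^{1,\infty}([0,T]\times[-1,1])$ with $H_s\in\mathbb D_s$. Dynkin's formula gives
\[
M^\epsilon_t(H)=Y^\epsilon_t(H_t)-Y^\epsilon_0(H_0)-\int_0^t(\partial_s+L_\epsilon)Y^\epsilon_s(H_s)\,ds ,
\]
where the time derivative also hits $\rho^\epsilon_s$; this is the reason for centering with $\rho^\epsilon_t(x)=\mathbb E^\epsilon_{\mu^\epsilon}[\eta_t(x)]$. Summing by parts splits the integral term into three pieces:
\begin{itemize}
\item a bulk piece $Y^\epsilon_s(\tfrac12\Delta_\epsilon H_s)$, which is close to $Y^\epsilon_s(\tfrac12\partial_u^2H_s)$;
\item boundary terms from the exchange dynamics, of the form $\sqrt\epsilon\,\epsilon^{-1}\nabla_\epsilon H_s(\pm1)\,\bar\eta_s(\pm N)$;
\item boundary terms $\sqrt\epsilon\,\epsilon^{-1}\tfrac j2\sum_{x\in I_\pm}H_s(\epsilon x)\big(D_\pm\eta_s(x)-\mathbb E[D_\pm\eta_s(x)]\big)$ from births and deaths.
\end{itemize}
Write $\bar\eta=\eta-\rho^\epsilon$ and expand the product $D_\pm\eta(x)$ around $\rho^\epsilon$ into centered monomials $\prod_{y\in A}\bar\eta(y)$. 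The degree-one part is $\sum_{y}\partial_{\eta(y)}D_\pm(\rho^\epsilon)\,\bar\eta(y)$. Since $|x-y|\le K$ and $H$ is smooth, all of these terms can be replaced by $\bar\eta(\pm N)$ at a cost $O(\epsilon^{1/2})$ in $L^2$. We also replace $\rho^\epsilon$ by $\rho_\epsilon$, and then by $\rho(\pm1,s)$, using Theorem \ref{hydrodynamiclimit} and the sup-norm closeness from \cite{birmpa2025nonequilibriumfluctuationsstirringprocess}. The total degree-one coefficient becomes $\epsilon^{-1/2}\big(\partial_uH_s(\pm1)-\widetilde D'_\pm(\rho(\pm1,s))H_s(\pm1)\big)\bar\eta_s(\pm N)$ plus errors, and the bracket vanishes by \eqref{eq:bc_test_function}. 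This is where the choice of $\mathbb D_s$ enters.

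\textbf{Step 2: error and higher-degree terms (main obstacle).} The remaining terms are $\epsilon^{-1/2}$ times centered monomials of degree $\ge2$ near the boundary, together with the error from the previous replacement, which is $\epsilon^{-1/2}|\rho^\epsilon-\rho|\,\bar\eta(\pm N)$-like. We must show their time integrals vanish in $L^2(\mathbb P_{\mu^\epsilon})$. The second moment is
\[
\epsilon^{-1}\int_0^t\!\!\int_0^t \mathbb E\Big[\prod_{A}\bar\eta_s\prod_{B}\bar\eta_r\Big]\,dr\,ds ,
\]
which is exactly a space--time $v$-function of degree $|A|+|B|\ge 4$ (or $\ge2$ for the linear errors). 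Here I would invoke Theorems \ref{thm:vestimate2} and \ref{thm:vestimate3}, valid under \textbf{Assumption 1}. They give bounds $\epsilon^{1+\zeta}$ times a time weight integrable at $0$, so the $\epsilon^{-1}$ is beaten. This is the delicate step: the bounds are needed precisely at order $\epsilon^{1+\zeta}$, and equal-time products need space $v$-function bounds while the cross terms $s\neq r$ need space--time ones. For the degree-one error we combine these bounds with Corollary \ref{corollary: difference of rhos}, using $\epsilon^{-1/2}\,\sup|\rho^\epsilon-\rho_\epsilon|\cdot O(1)\to0$.

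\textbf{Step 3: quadratic variation, tightness, limit.}
\begin{itemize}
\item \emph{Quadratic variation.} The quadratic variation of $M^\epsilon_t(H)$ equals $\int_0^t\Gamma_\epsilon(H_s)\,ds$. The carré du champ contains $\tfrac12\epsilon\sum_x(\nabla_\epsilon H_s)^2(\eta(x+1)-\eta(x))^2$ and $\tfrac j2\sum_{I_\pm}H_s^2(\epsilon x)D_\pm\eta(x)$. By replacing $(\eta(x+1)-\eta(x))^2$ with its mean and using the $v$-function bounds (degree two, now with only an $\epsilon$ prefactor), it converges in $L^2$ to $\int_0^t\|H_s\|^2_{L^2(\rho_s)}ds$. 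Here $\mathbb E[D_\pm\eta]\to\widetilde D_\pm(\rho(\pm1,s))$, by the propagation of chaos contained in the same bounds.
\item \emph{Tightness.} Tightness in $\mathcal D([0,T],\mathbb S')$ follows by Mitoma's criterion from Aldous' criterion for each $Y^\epsilon(H)$. The integral term is controlled by Step 2 and uniform $L^2$ bounds $\mathbb E[Y^\epsilon_s(G)^2]\lesssim\|G\|^2$ (degree-two space $v$-functions). The martingale term is controlled by the quadratic variation bound. Jumps are $O(\sqrt\epsilon)$, so limit points are continuous.
\item \emph{Initial law and martingale property.} $Y^\epsilon_0$ converges to a Gaussian field with covariance $\int\chi(u_0)HG$ by the CLT for product measures. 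Passing to the limit identifies $M_t(H)$ and $N_t(H)$ as martingales, using uniform integrability from fourth moment bounds.
\item \emph{Conclusion.} Uniqueness, as outlined before the theorem, yields convergence. Formula \eqref{covariance non eq limit field} follows by choosing $H_r=T_{t,r}H\in\mathbb D_r$, for which $M_t(H)=Y_t(H)-Y_0(T_{t,0}H)$, and then computing covariances via $N_t$.
\end{itemize}
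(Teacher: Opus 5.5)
Your architecture (Dynkin martingales, cancellation of the degree-one boundary terms through $H_s\in\mathbb D_s$, space--time $v$-estimates for the remainders, Mitoma plus a martingale CLT) is the paper's, but two steps are not justified as written.

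\textbf{The replacement in Step 1.} You move the degree-one terms $\bar\eta_s(y)$, $y\in I_\pm$, to $\bar\eta_s(\pm N)$ ``at cost $O(\epsilon^{1/2})$ since $|x-y|\le K$ and $H$ is smooth''. Smoothness of $H$ only lets you replace $H_s(\epsilon x)$ by $H_s(\pm1)$. It says nothing about replacing one occupation variable by another. The error is $\epsilon^{-1/2}\int_0^t c_s\,(\bar\eta_s(y)-\bar\eta_s(\pm N))\,ds$ with $c_s=O(1)$; for $K=2$ this is exactly \eqref{eq:diff_eta}. Its integrand is of size $\epsilon^{-1/2}$. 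Even the bound $|v^{\epsilon,1,1}_{s,r}|\lesssim\epsilon(r-s)^{-1/2}$ of Theorem \ref{thm:vestimate3} only makes its second moment $O(1)$. A genuinely dynamical input is needed here. The paper uses the gradient estimate of Corollary \ref{cor:vestimate3_difference}, of order $\epsilon^{2-4\zeta}(r-s)^{-1+\zeta}$, to get $O(\epsilon^{1-4\zeta})$ in Lemma \ref{lem:diff_eta}. Alternatively, you can write $\eta(y)-\eta(\pm N)=L_0 f$ with $f$ bounded and supported in $I_\pm$, and apply Dynkin's formula to $f$; the prefactor $\epsilon^{-2}$ of $L_0$ in $L_\epsilon$ then supplies the smallness.

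Relatedly, in Step 2 the degree-two bounds do not beat $\epsilon^{-1}$. For $\epsilon^{-1/2}(\rho_\epsilon(\pm N,s)-\rho(\pm1,s))\bar\eta_s(\pm N)$ they only give $O(1)$. Since Theorem \ref{hydrodynamiclimit} has no rate and holds only away from $t=0$, you need the dominated-convergence argument of Lemma \ref{lem:use_of_hydrodynamic_limit}, not a sup-norm bound.

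\textbf{Tightness.} Mitoma's criterion requires tightness of $Y^\epsilon_\cdot(H)$ for \emph{every} $H\in\mathbb S$, and a generic $H$ violates \eqref{eq:bc_test_function}. The terms $\epsilon^{-1/2}\big(\partial_uH(\pm1)-\widetilde D'_\pm(\rho(\pm1,s))H(\pm1)\big)\bar\eta_s(\pm N)$ then do not cancel; equivalently, $\Delta_\epsilon H$ has spikes of size $\epsilon^{-1}$ at $\pm N$. Step 2 does not cover these terms.

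You also cannot fall back on the static bound $\mathbb E[Y^\epsilon_s(G)^2]\lesssim\|G\|^2$, because it does not follow from Theorem \ref{thm:vestimate2}. With $|v^\epsilon_2|\lesssim\epsilon^{1-2\zeta}$, the off-diagonal part of $\mathbb E[Y^\epsilon_s(G)^2]$ is of order $\epsilon\cdot\epsilon^{-2}\cdot\epsilon^{1-2\zeta}=\epsilon^{-2\zeta}$. This is exactly why the paper stresses that the $n=1,2$ space estimates are insufficient.

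The fix, as in Section \ref{sec:tightness}, has two parts:
\begin{enumerate}
\item Bound increments of each integral term directly by time-integrated space--time correlations, e.g. $\epsilon^{-1}\int_s^t\int_s^r\sup|v^{\epsilon,1,1}_{\tau,r}|\,d\tau\,dr\lesssim(t-s)^{3/2}+(t-s)^2$, using \eqref{eq:discrete_laplacian_l1} for the bulk.
\item Apply Kolmogorov--Centsov. These estimates hold at deterministic times under $\mathbb P_{\mu^\epsilon}$, so they do not directly feed Aldous' criterion with stopping times.
\end{enumerate}

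The remaining steps are fine: the initial CLT, and the covariance via $H_r=T_{t,r}H$. Your $L^2$ treatment of the quadratic variation through space--time $v$-functions is a valid substitute for the replacement lemma the paper borrows from \cite{erignoux2020hydrodynamics}.
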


\subsection{Estimates on space-time $v$-functions}

The proof of Theorem \ref{thm:non_eq_flu} relies on the analysis of the auxiliary martingales that converge as $N\to+\infty$, to those defined in \eqref{eq:martingale_problem}. In this context, the space-time  $v$-function defined below naturally arises as it controls  the difference between  the expectation of products of $\eta_t(x)$ and
$\rho_\eps(x,t)$ at different times and spatial points. Obtaining appropriate estimates for this function is therefore essential for our applications.  First we introduce the space $v$-function.
\begin{definition}\label{definitionvestimate}We fix an initial product measure $\mu^{\epsilon}$ on  $\Lambda_N$.
%shorthand by $\E_\eps$ the expectation for the
%process with generator $L_\eps=\eps^{-2}L_0 +\eps^{-1} L_b$ which
%starts from  $\eta$ and write $\rho_\eps(x,t)$ for the solution of
%\eqref{eq:linearized} with initial condition $\eta$. 
%Recall  that the
%$v$-functions are defined in \eqref{pro.2.3.1} and, for brevity, we
%shall write $v(\und x,t)\equiv v^\eps(\underline x,t|\eta)$.
Suppose that the process $\{\eta_{t}\}_{t\geq 0}$ starts with $\mu_{\epsilon}$ and $\rho_{\epsilon}(t,x) $ is the solution of \eqref{eq:linearized}. We define the space  $v$-function as:
\begin{equation}\label{def: v function original}
v_{n}^{\epsilon}(\underline{x},t|\mu^{\epsilon}):=\mathbb{E}^{\epsilon}_{\mu^\epsilon}\left[\prod_{i=1}^{n}\left(\eta_{t}(x_i)-\rho_{\epsilon}(x_{i},t)\right)\right],\quad \underline{x}\in\Lambda_{N}^{n,\neq},n\geq1
\end{equation}
where $\Lambda_{N}^{n,\neq}$ is the set of all sequences $(x_1,\dots,x_n)\in \Lambda_{N}^{n}$ with $x_i\neq x_j$. For brevity, we
shall write $v_{n}^{\epsilon}(\underline{x},t)\equiv v_{n}^{\epsilon}(\underline{x},t|\mu^{\epsilon})$. When $\underline{x}=x$ then 
\begin{equation} \label{eq:important_1}
v_{1}^{\epsilon}(x,t):=\rho_t^{\epsilon}(x)-\rho_{\epsilon}(x,t).
\end{equation}
and when $\underline{x}=\und\emptyset$ is the empty configuration (i.e., no particles in the system), then 
\begin{equation}\label{v:emptyset}
v_0^{\eps}(\und \emptyset,t)=1.
\end{equation}
\end{definition}
We observe that as a consequence of  Remark \ref{rem:v_1_null} we have  $v^\eps_1(x,0)\equiv 0$.

In Theorem 2.1 of \cite{de2012truncated} the authors obtained estimates for 
$v_{n}^{\epsilon}(x,t)$   which depend on the number of particles $n$. Those estimates proved insufficient for our purposes-specifically, for achieving closure of the auxiliary martingales presented in Section \ref{sec: Auxiliary martingale}. Therefore, refined estimates were provided in the companion article \cite{birmpa2025nonequilibriumfluctuationsstirringprocess}, and we recall them here for completeness.
Before we proceed, we note that when $K=1$ the result below is not necessary for the proof of non-equilibrium fluctuations as it was given in \cite{franco2019non,gonccalves2020non}. Nevertheless, when $2\leq K\leq N$ we need a much more refined analysis not only for the closure of the martingale but also to show tightness of the sequence of the fluctuation fields. 

We note however that the estimates for the space $v$-function given in the next theorem continue to be insufficient  when $n=1,2$ as we do not get a bound of order $\epsilon$ contrary to the case $K=1$, where in  Proposition 8.1 of  \cite{franco2019non} it was derived a bound of order $\epsilon$ for the two point correlation function. That bound was sufficient to derive the non-equilibrium fluctuations of the model in the case $K=1$. Consequently, the next result and all results that follow, are stated for $K\geq 2$. For a proof we refer the reader to \cite{birmpa2025nonequilibriumfluctuationsstirringprocess}.
\begin{theorem} [Space correlation $v$-estimate, \cite{birmpa2025nonequilibriumfluctuationsstirringprocess}]
	\label{thm:vestimate2}
Let $K\geq 2$ be a fixed  integer number, and $\zeta>0$. For any initial product measure $\mu^{\epsilon}$ satisfying \eqref{Ass1},  and any time  $0<t\leq T$
{\begin{equation}\label{newvestimate}
\sup_{\underline{x}\in\Lambda_{N}^{n,\neq}}|v_{n}^{\epsilon}(\underline{x},t|\mu^{\epsilon})|\lesssim   \epsilon^{1-2\zeta}\mathbf{1}_{n=1,2}+\epsilon\mathbf{1}_{3\leq n\leq K}+\epsilon^{1+\zeta}\mathbf{1}_{n\geq K+1}.\end{equation}
}
\end{theorem}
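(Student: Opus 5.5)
The bound \eqref{newvestimate} should come from a Duhamel (integral) representation of the $v$-functions, in the spirit of \cite{de2012truncated}, with the estimate improved inductively in $n$. First compute $\frac{d}{dt}v_n^\epsilon(\underline x,t)$ using the generator \eqref{eq:gen_total} and the closed equation \eqref{eq:linearized} satisfied by $\rho_\epsilon$. The stirring part acts on $v_n$ through the generator $\epsilon^{-2}L_0^{(n)}$ of $n$ stirring (exclusion) particles with reflection at $\pm N$. The boundary part, written in the centered variables $\eta(y)-\rho_\epsilon(y,t)$, produces three kinds of terms:
\begin{enumerate}[(a)]
\item terms of degree $n-1$ and $n$, with coefficients involving products of $\rho_\epsilon$ on the window $I_\pm$;
\item terms of degree up to $n+K-1$, coming from the expansion of the products $D_\pm$;
\item a linear-in-$\epsilon^{-1}$ mismatch between $\mathbb E[D_\pm\eta]$ and $D_\pm\rho_\epsilon$, which is itself a combination of $v_m$ with $2\le m\le K$ supported in $I_\pm$.
\end{enumerate}
All of these terms carry the prefactor $\epsilon^{-1}$ and involve only particles located in $I_\pm$. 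By Assumption 1 the initial datum is $v_n(\underline x,0)=0$ for $n\ge1$, since the initial measure is product with marginals $\rho_\epsilon(\cdot,0)$. Duhamel then gives
\[
v_n(\underline x,t)=\int_0^t \sum_{\underline y} P^{(n)}_{\epsilon^{-2}(t-s)}(\underline x,\underline y)\,\epsilon^{-1}\Big[\text{boundary terms in } v_{m}(\cdot,s),\ m\in\{n-1,\dots,n+K-1\}\Big]\,ds ,
\]
plus the stirring correlation term. That term arises because $n$ stirring particles are not independent, and it is controlled by the standard coupling of stirring with independent walks; it is exactly the source that makes $v_2\neq0$ even for $K=1$.

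The key analytic input is that the boundary terms involve sums over $y_i\in I_\pm$ with $|I_\pm|=K$ fixed. For each particle forced to sit in the window, the heat kernel gives
\[
\sum_{y\in I_\pm}P_{\epsilon^{-2}(t-s)}(x,y)\lesssim \frac{1}{1+\epsilon^{-1}\sqrt{t-s}} .
\]
Integrated against $\epsilon^{-1}ds$ this yields roughly $\int_0^t \epsilon^{-1}\min(1,\epsilon/\sqrt{t-s})\,ds\sim O(1)$ if one particle hits the window, and an extra $\epsilon$ factor per additional particle hitting it, up to logarithms. I would then run an iteration. Start from the crude a priori bound of \cite{de2012truncated}, $|v_n|\lesssim \epsilon^{n/2-\delta}$-type, which is valid uniformly for product initial data. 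Plug it into the Duhamel formula and bootstrap: a term of degree $m\geq n$ gains whatever smallness $v_m$ has, while the degree $n-1$ terms feed back the estimate for $v_{n-1}$. This produces the staircase seen in \eqref{newvestimate}. For $n\ge K+1$ the creation terms of degree $\le n+K-1$ all contain at least $n-K+1\ge2$ factors pinned to the window, giving $\epsilon^{1+\zeta}$. For $3\le n\le K$ exactly one spatial constraint survives, giving order $\epsilon$. For $n=1,2$ the logarithmically divergent time integrals near $s=t$ and near $s=0$ cost the $\epsilon^{-2\zeta}$ loss.

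The hardest step will be closing the hierarchy, since $v_n$ depends on $v_{n+K-1}$ and the system is not triangular. To handle it, I would use the truncation idea of \cite{de2012truncated}: iterate the Duhamel formula a finite number $q$ of times, expressing $v_n$ as a sum over branching histories of length $q$. In each branch a degree increase forces extra particles to be created in $I_\pm$, and each such creation gains a factor $\epsilon^{1/2}$ or better from the bound above. After $q=q(\zeta,n)$ steps the remainder, bounded only by $|v_m|\le 2^m$, is $O(\epsilon^{1+\zeta})$. The combinatorial growth of the branches is controlled because $K$ and $n$ are fixed. The delicate point is the time singularity $\int_0^t (t-s)^{-1/2}s^{-\alpha}ds$ in nested integrals. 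Here Assumption 1 (the vanishing of $v$ at time zero together with the gradient control of $\rho_\epsilon$ from Corollary \ref{corollary: difference of rhos}) is what keeps every exponent above $-1$, so that the nested integrals stay finite.
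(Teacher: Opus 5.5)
The paper does not prove this statement; it quotes it from \cite{birmpa2025nonequilibriumfluctuationsstirringprocess}. So I can only judge your plan on its own terms. Your Duhamel/$v$-hierarchy framework is the natural one, but the step that is supposed to close the hierarchy does not work.

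A creation event needs only \emph{one} particle in $I_\pm$, and the same holds for a degree-preserving boundary event. By your own kernel bound such an event costs $\int_0^t\epsilon^{-1}(1+\epsilon^{-1}\sqrt{t-s})^{-1}\,ds\sim\sqrt t$, which is of order one, not $\epsilon^{1/2}$. The particles it creates are placed inside the window, so they never have to reach it. Hence, after a fixed number $q=q(\zeta,n)$ of iterations, the remainder (controlled only through the trivial bound on $|v_m|$) is bounded by an $\epsilon$-independent quantity, roughly $(C\sqrt t)^q$ up to combinatorial factors, and not by $\epsilon^{1+\zeta}$.

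For the same reason your claim that a term of degree $m\ge n$ ``gains the smallness of $v_m$'' is circular at $m=n$. The degree-$n$ terms with prefactor $\epsilon^{-1}$ are the microscopic Robin terms. They must be absorbed by a short-time contraction plus time stepping, or built into the propagator.

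The smallness of a branch comes only from the annihilations it must undergo before reaching time $0$, where $v_m(\cdot,0)=0$ for $m\ge 1$. A truncation with a trivial remainder discards exactly this information. To close the argument you need one of two things.
\begin{enumerate}
\item A genuinely small input for the highest-degree $v$'s, followed by a downward induction in the degree. This is where an $n$-dependent a priori estimate such as that of \cite{de2012truncated} must enter.
\item A number of iterations growing like $\log(1/\epsilon)$, together with an $\epsilon$-uniform bound, geometric in $q$, on the weight of histories with $q$ boundary events. This requires controlling how the number of particles near the window grows, not just the one-particle kernel estimate.
\end{enumerate}

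The a priori input you quote is also wrong. For $K\ge 3$, no bound $|v_n|\lesssim\epsilon^{n/2-\delta}$ uniform in $\underline x$ can hold in the range $3\le n\le K$. Let $x_1\in I_+$ be the leftmost particle, so that $x_2,\dots,x_n$ lie to its right. The identity $\eta(x_k)\bar\eta(x_k)=(1-\rho_\epsilon(x_k,t))\eta(x_k)$ then gives
\[
\mathbb E^\epsilon_{\mu^\epsilon}\Big[D_+\eta_t(x_1)\prod_{k\ge 2}\bar\eta_t(x_k)\Big]=\prod_{k\ge 2}\big(1-\rho_\epsilon(x_k,t)\big)\,\mathbb E^\epsilon_{\mu^\epsilon}\big[D_+\eta_t(x_1)\big].
\]
The right-hand side is of order one, not a small $v$-function. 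So the equation for $v_n$ has a source of size $\epsilon^{-1}$ on configurations lying entirely in $I_+$, and such configurations exist precisely when $n\le K$. At $t=0$ all other source terms vanish: they are linear in $v_m$ with $m\ge 1$, or vanish by independence under the product measure. Hence when $0<u_0(1)<1$ one already gets $|v_n|$ of order $\epsilon$ at times of order $\epsilon^2$. In other words, the term $\epsilon\mathbf 1_{3\le n\le K}$ in \eqref{newvestimate} is sharp.

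This absorption mechanism, and not the creation terms with ``$n-K+1$ pinned factors'', is what produces the threshold at $K+1$. For $n\ge K+1$ at least one particle survives the absorption, so the corresponding source is multiplied by a $v$-function of degree at least one and is smaller.
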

\begin{remark}\label{rem:ind_time_space_v}
Above and in the following, $\lesssim$ denotes an inequality that is correct up to a
multiplicative universal constant, that is, independent of $N, \epsilon, t$. 
\end{remark}

As a consequence:
\begin{corollary}[\cite{birmpa2025nonequilibriumfluctuationsstirringprocess}]\label{cor: thm for space v_initial}
Let  $K\geq 2$ be a fixed  integer number  and $\zeta>0$. For any initial product measure $\mu^{\epsilon}$ satisfying \eqref{Ass1},  and time  $0<t\leq T$
\begin{equation}\label{d_initial}
\sup_{\underline{x}\in\Lambda_{N}^{n,\neq}}|v^{\epsilon}_{n}( \underline{x},t|\mu^{\epsilon})|-v^{\epsilon}_{n}( \underline{x}+\und e_1,t|\mu^{\epsilon})|\lesssim
\begin{cases}
\epsilon,&\;1\leq n\leq K\\
 \epsilon^{1+\zeta}, &n\geq K+1,
\end{cases}
\end{equation}
where $\und e_1$ being the unit vector in the positive 1st-direction.
 \end{corollary}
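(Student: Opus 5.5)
The plan is to derive Corollary \ref{cor: thm for space v_initial} from the same Duhamel/integral representation of the $v$-functions that underlies Theorem \ref{thm:vestimate2}. Differentiating $v_n^\epsilon(\underline{x},t)$ in time and using the generator \eqref{eq:gen_total} together with the closed equation \eqref{eq:linearized} gives
\[
\partial_t v_n^\epsilon = \epsilon^{-2}\mathcal{L}_0^{(n)} v_n^\epsilon + \epsilon^{-1}\mathcal{B}_n(v),
\]
where $\mathcal{L}_0^{(n)}$ is the generator of $n$ stirring (labelled, exclusion-interacting) particles in $\Lambda_N$. The term $\mathcal{B}_n(v)$ collects the boundary contributions. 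These are a linear combination of $v_m^\epsilon$ with $m$ ranging up to $n+K-1$, with coefficients built from products of $\rho_\epsilon$, and they are supported on configurations with at least one coordinate in $I_\pm$. Since $v_n^\epsilon(\cdot,0)=0$ for $n\ge1$ under \textbf{Assumption 1} (product measure with $\rho_0^\epsilon=\rho_\epsilon(\cdot,0)$), Duhamel's formula gives
\[
v_n^\epsilon(\underline{x},t)=\epsilon^{-1}\int_0^t \sum_{\underline{y}} P^{(n)}_{\epsilon^{-2}(t-s)}(\underline{x},\underline{y})\,\mathcal{B}_n(v)(\underline{y},s)\,ds ,
\]
where $P^{(n)}$ is the $n$-particle stirring transition kernel.

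The discrete gradient in the first coordinate then falls only on the kernel:
\[
v_n^\epsilon(\underline{x},t)-v_n^\epsilon(\underline{x}+\underline e_1,t)=\epsilon^{-1}\int_0^t\sum_{\underline{y}}\big[P^{(n)}_{\epsilon^{-2}(t-s)}(\underline{x},\underline{y})-P^{(n)}_{\epsilon^{-2}(t-s)}(\underline{x}+\underline e_1,\underline{y})\big]\mathcal{B}_n(v)(\underline{y},s)\,ds .
\]
The strategy has three steps.

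\emph{Step 1.} Bound $\sup|\mathcal{B}_n(v)(\cdot,s)|$ using Theorem \ref{thm:vestimate2}. The lowest-degree terms ($m=n-1$ or $m=n$, which carry factors of order one, or $v_0=1$ when $n=1$) dominate, giving a bound of order $\epsilon^{1-2\zeta}$ or better for $n\le K$ and $\epsilon^{1+\zeta}$ for $n\geq K+1$. The constants are uniform in $s$ by Remark \ref{rem:ind_time_space_v}.

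\emph{Step 2.} Use the gradient estimate for the stirring kernel. Through the Liggett inequality, or the coupling of $n$ stirring particles with independent walks, one has
\[
\sum_{\underline y\,:\,\text{some } y_i\in I_\pm}\big|P^{(n)}_\tau(\underline{x},\underline{y})-P^{(n)}_\tau(\underline{x}+\underline e_1,\underline{y})\big|\lesssim \frac{K}{1+\tau},
\]
that is, a gradient gain of $(1+\sqrt\tau)^{-1}$ times a hitting probability of order $K/\sqrt{1+\tau}$ for the boundary window. This is the same mechanism as in Corollary \ref{corollary: difference of rhos} for $\rho_\epsilon$.

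\emph{Step 3.} Integrate in time with $\tau=\epsilon^{-2}(t-s)$. This gives
\[
\epsilon^{-1}\int_0^t\frac{ds}{1+\epsilon^{-2}(t-s)}\lesssim \epsilon\log(\epsilon^{-1}).
\]

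The main obstacle is the case $n\le2$. There Step 1 only gives $\epsilon^{1-2\zeta}$ for the $v$-function itself, while the corollary demands exactly $\epsilon$ with no loss. Crude use of Theorem \ref{thm:vestimate2} inside $\mathcal{B}_n$ would lose $\epsilon^{-2\zeta}$ and a logarithm. To handle this I would not bound $\mathcal{B}_n(v)$ by its supremum. Instead I would split $\mathcal{B}_n(v)$ into two parts. The first is the "source" part, involving $v_0=1$ or $v_1$ multiplied by discrete derivatives of $\rho_\epsilon$. Here the products of $\rho_\epsilon$ appearing in $D_\pm$ differ between neighbouring sites by $O(\epsilon)$ (Corollary \ref{corollary: difference of rhos}), which buys an additional factor allowing a summation by parts in $\underline y$, so that the gradient is transferred and the singularity becomes integrable. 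The second is a remainder of higher-degree $v_m$, $m\geq 3$, which is at most of order $\epsilon$ and for which the log loss is absorbed by choosing $\zeta$ smaller in Theorem \ref{thm:vestimate2}. For $n\ge K+1$ everything carries an extra $\epsilon^{\zeta}$, and the argument directly yields $\epsilon^{1+\zeta}$ after adjusting $\zeta$. This adjustment is admissible since $\zeta>0$ is arbitrary.
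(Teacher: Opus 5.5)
The paper gives no argument here. It introduces the corollary as a consequence of Theorem~\ref{thm:vestimate2} and defers to the companion article. For $n\ge3$ that is literally true: $|v_n(\underline x)-v_n(\underline x+\underline e_1)|\le 2\sup|v_n|$, and the theorem already gives $\epsilon$, respectively $\epsilon^{1+\zeta}$. So the real content is $n=1,2$. Your Duhamel approach handles $n=1$ and can be made to work, but as written it has a genuine gap at $n=2$, and your description of the source term hides it. Three corrections:
\begin{itemize}
\item For $n=1$ there is no $v_0$ source. $\mathbb E[D_\pm\eta_s(x)]-D_\pm\rho_\epsilon(x,s)$ expands only into $v_m$ with $m\ge1$, so $\sup|\mathcal B_1|\lesssim\epsilon^{1-2\zeta}$. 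Your own Steps 2--3 then give $\epsilon^{2-2\zeta}\log\epsilon^{-1}\ll\epsilon$, so $n=1$ is not an obstacle.
\item The equation for $v_n$ is not ``stirring generator plus boundary terms''. On configurations with $|x_i-x_j|=1$ the exchange dynamics acting on $\prod\bar\eta$ leaves bulk terms of the form $\epsilon^{-2}(\rho_\epsilon(x_i,s)-\rho_\epsilon(x_j,s))^2v_{n-2}$. It also leaves $\epsilon^{-2}(\rho_\epsilon(x_i,s)-\rho_\epsilon(x_j,s))$ times a nearest-neighbour difference of $v_{n-1}$.
\item These bulk terms are not supported near $I_\pm$. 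They turn out to be harmless, since there is no $\epsilon^{-1}$ in front and the nearest-neighbour set carries kernel mass $\tau^{-1/2}$. They still have to be included.
\end{itemize}

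The gap is that Step~1 is false for $n=2$. Suppose both coordinates lie in the same window, say $x_2<x_1$ in $I_+$. The flip at $x_2$ has rate $D_+\eta(x_2)$, which contains the factor $\eta(x_1)$. Since $\eta(x_1)\bar\eta(x_1)=(1-\rho_\epsilon(x_1,s))\bar\eta(x_1)+\chi(\rho_\epsilon(x_1,s))$, $\mathcal B_2$ contains a $v_0$-term with coefficient of order one, supported on $I_\pm^{2,\neq}$. With your Step~2 bound $K/(1+\tau)$, this source yields $\epsilon\log\epsilon^{-1}$, which is not $\lesssim\epsilon$ for any $\zeta$. In particular, absorbing a logarithm by shrinking $\zeta$ is not available for an exact $\epsilon$ bound.

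Your proposed summation by parts does not touch this term. Its coefficient $\chi(\rho_\epsilon)$ is not a difference of $\rho_\epsilon$'s, and it lives on a set of bounded size. Moving the gradient onto it only produces order-one jumps at the edges of the window. The missing idea is to use that this source requires \emph{both} particles in the window. A two-particle bound of the type $\sum_{\underline y\in I_\pm^{2,\neq}}|P^{(2)}_\tau(\underline x,\underline y)-P^{(2)}_\tau(\underline x+\underline e_1,\underline y)|\lesssim K^2(1+\tau)^{-3/2}$ is what is needed. It combines a gradient gain $\tau^{-1/2}$ with probability of order $\tau^{-1}$ that both stirring particles sit in the window. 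It makes the time integral convergent, $\epsilon^{-1}\int_0^t(1+\epsilon^{-2}(t-s))^{-3/2}ds\lesssim\epsilon$, with no logarithm. Terms with only one coordinate in the window have coefficients $O(\epsilon^{1-2\zeta})$ by Theorem~\ref{thm:vestimate2}, and your cruder bound suffices for them.

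You would also have to justify such gradient estimates for the stirring kernel with reflecting boundaries yourself. Liggett's inequality compares expectations of positive definite functions, not differences of kernels, so it does not give them; a coupling argument is needed.
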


 In our case, since the boundary dynamics increases the degree of polynomial functions of $\eta$ the analysis is much more complicated and we cannot get the bound $\epsilon$ as desired. To overcome that issue, we instead gain a better rate of convergence to $0$ by analyzing space-time $v$-functions  that we define as follows. \begin{definition}\label{def:space time correlations}
Let $\underline{x}=(x_1,\dots,x_n)\in \Lambda_{N}^{n,\neq}$ and $\underline{y}=(y_1,\dots,y_m)\in \Lambda_{N}^{m,\neq}$ with $n,m\geq 1$. For any initial product measure $\mu^{\epsilon}$ and times $0<s<r$, the space-time correlation $v$-function is given by 
\begin{equation}\label{def:space time corr fun}
v^{\epsilon,m,n}_{s,r}(\underline{y}, \underline{x}|\mu^{\epsilon}):=\mathbb{E}^{\epsilon}_{\mu^\epsilon}\left[\prod_{j=1}^{m}\bar\eta_s(y_j)\prod_{i=1}^{n}\bar\eta_r(x_i)\right]
\end{equation}
where  $\bar\eta_t(x):=\eta_t(x)-\rho_\eps(x,t)$, $t\geq 0$.
\end{definition}
 The subscript indicates the times $s,r$ involved in the space-time correlation $v$-function. The inputs $(\underline{y},\underline{x})$ are the positions of the $m$ particles at time $s$, denoted by  $\underline{y}=(y_1,\dots,y_m)$ and the positions of the $n$ particles at time $r$, denoted by  $\underline{x}=(x_1,\dots,x_n)$.
The superscript reflects the dependence on $\epsilon$, which may sometimes be omitted, and the number of particles at time $s$ and at time $r$, i.e., $m$ and $n$ respectively. 
In Theorem \ref{thm:vestimate3}, we establish a space-time correlation estimate. This theorem demonstrates that the $\mathbb{L}^2$-norms of  \eqref{eq:diff_eta}–\eqref{degree_four} are bounded by quantities that vanish as $\epsilon\to0$. In Section \ref{sec: Auxiliary martingale}, we revisit the estimation of these terms, providing detailed calculations to complete the analysis.

\begin{theorem}[Space-time correlation $v$-estimate]
	\label{thm:vestimate3}
Let  $K\geq 2$ be a fixed  integer number, $n,m\geq 1$ and $\zeta>0$. For any initial product measure $\mu^{\epsilon}$ satisfying \eqref{Ass1}, and times  $0<s<r\leq T$, there exists $\tilde\Delta>0$ (independent of $\eps$), such that when $r-s\leq \tilde\Delta$
\begin{equation}\label{bound:space_time_v_estimate1}
\sup_{\underline{y}\in\Lambda_{N}^{m,\neq}}\sup_{\underline{x}\in\Lambda_{N}^{n,\neq}}|v^{\epsilon,m,n}_{s,r}(\underline{y}, \underline{x}|\mu^{\epsilon})|\lesssim 
\begin{cases}
 \epsilon(r-s)^{-1/2},&n+m=2,3\\[2mm]
 \epsilon^{1+\zeta}(r-s)^{-1+\zeta},&n+m\geq 4,\;n=m\\[2mm]
\epsilon^{1+\zeta}(r-s)^{-/2},&n+m\geq 4,\;n\neq m.
\end{cases}
\end{equation}
When $r-s> \tilde\Delta$,   
\begin{equation}\label{bound:space_time_v_estimate2}
\sup_{\underline{y}\in\Lambda_{N}^{m,\neq}}\sup_{\underline{x}\in\Lambda_{N}^{n,\neq}}|v^{\epsilon,m,n}_{s,r}(\underline{y}, \underline{x}|\mu^{\epsilon})|\lesssim
\begin{cases}
 \epsilon,&n+m=2,3\\[2mm]
 \epsilon^{1+\zeta},&n+m\geq 4.
\end{cases}
\end{equation}
\end{theorem}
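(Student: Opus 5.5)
The plan is to reduce space-time correlations to space $v$-functions via the Markov property at time $s$, and then to propagate the time-$r$ factors by a Duhamel (variation of constants) expansion. Conditioning on $\mathcal F_s$, we write
\[
v^{\epsilon,m,n}_{s,r}(\underline y,\underline x)=\mathbb E^\epsilon_{\mu^\epsilon}\Big[\prod_{j=1}^m\bar\eta_s(y_j)\,\Phi_{r-s}(\eta_s;\underline x)\Big],\qquad \Phi_{\tau}(\eta;\underline x):=\mathbb E^\epsilon_\eta\Big[\prod_{i=1}^n\big(\eta_\tau(x_i)-\rho_\epsilon(x_i,s+\tau)\big)\Big].
\]
The function $\Phi_\tau(\eta;\cdot)$ solves a closed backward hierarchy: the generator $L_0$ acts on $n$-point polynomials as $n$ stirring particles (duality), while $\epsilon L_{b,\pm}$ maps them into polynomials of degree at most $n+K-1$ supported near the boundary windows $I_\pm$. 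Subtracting the linearised evolution \eqref{eq:linearized}, we get a Duhamel formula
\[
\Phi_\tau(\eta;\underline x)=\sum_{\underline z}P^{(n)}_{\epsilon^{-2}\tau}(\underline x,\underline z)\prod_i\big(\eta(z_i)-\rho_\epsilon(z_i,s)\big)+\int_0^\tau(\text{boundary source terms})\,d\lambda ,
\]
where $P^{(n)}$ is the $n$-particle stirring semigroup (reflected at $\pm N$). The source terms involve $\epsilon^{-1}$ times products of centred variables at sites in $I_\pm$ at intermediate times, weighted by the probability that a stirring particle started at $x_i$ sits in $I_\pm$. That probability carries the heat-kernel factor $G_{\epsilon^{-2}\lambda}(x,z)\lesssim \epsilon(\lambda)^{-1/2}$.

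Inserting this expansion into the expectation at time $s$ turns each term into a space $v$-function at time $s$ of degree $m+n'$, with $n'\le n$ produced by the free part, or into a mixed space-time term of higher degree produced by the source. The steps are as follows.

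First, for the free part, use Theorem \ref{thm:vestimate2} together with Corollary \ref{cor: thm for space v_initial}. The factors coming from coinciding labels $y_j=z_i$ are handled by the exclusion identity $\bar\eta^2=(1-2\rho_\epsilon)\bar\eta+\chi(\rho_\epsilon)$, which lowers the degree. The probability that a stirring particle from $x_i$ hits a prescribed site $y_j$ is at most $\epsilon(r-s)^{-1/2}$, and this is the source of the factor $(r-s)^{-1/2}$.

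Second, for the boundary source, bound $\epsilon^{-1}\int_0^{r-s}\epsilon\lambda^{-1/2}\,|\text{correlation of degree}\ge n+m|\,d\lambda$ inductively in the total degree. This gives the gain $\epsilon^{1+\zeta}$ once the total degree is at least $K+1$, or at least $4$ after iteration. When $n=m$ the pairing terms produce a convolution of two singular kernels, which yields the exponent $-1+\zeta$.

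Third, for $r-s>\tilde\Delta$, the kernel singularities are bounded by constants depending on $\tilde\Delta$, and we recover the estimate \eqref{bound:space_time_v_estimate2} directly.

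The main obstacle is closing the induction. The source terms raise the degree by up to $K-1$ at each step, so the hierarchy is infinite. My first attempt would follow the cluster or truncated-correlation strategy of \cite{de2012truncated}, as refined in \cite{birmpa2025nonequilibriumfluctuationsstirringprocess}: represent the Duhamel expansion as a branching process of stirring particles with births at the boundary at rate $\epsilon^{-1}$. Each branching costs a factor $\epsilon^{-1}\cdot\epsilon\lambda^{-1/2}$ that is integrable in time, and restricting to $r-s\le\tilde\Delta$ with $\tilde\Delta$ small makes the series of branchings converge geometrically, uniformly in $\epsilon$. The delicate part is ensuring that each pairing between time-$s$ and time-$r$ particles yields exactly one factor $\epsilon(r-s)^{-1/2}$, without losing the extra $\epsilon^{\zeta}$ needed in the cases with $n+m\ge4$.
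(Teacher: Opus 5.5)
This paper does not prove Theorem~\ref{thm:vestimate3}; it imports it from the companion article \cite{birmpa2025nonequilibriumfluctuationsstirringprocess}. So your plan has to be judged on its own. The framework you choose (Markov property at time $s$, stirring duality, Duhamel expansion with branching at $I_\pm$) is the natural one, but two steps fail as written.

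\emph{The free term at time $s$ cannot produce the stated rates from Theorem~\ref{thm:vestimate2}.}
\begin{itemize}
\item \emph{The case $n=m=1$.} Besides the coincidence $z=y$, which does give $\epsilon(r-s)^{-1/2}$, your free term contains $\sum_{z\neq y}P_{\epsilon^{-2}(r-s)}(x,z)\,v_2^\epsilon(y,z,s)$. With only sup information, the best bound is $\sup|v^\epsilon_2(\cdot,s)|\lesssim\epsilon^{1-2\zeta}$. This is neither $\lesssim\epsilon(r-s)^{-1/2}$ uniformly for $r-s\le\tilde\Delta$, nor $\lesssim\epsilon$ for $r-s>\tilde\Delta$. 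The paper itself stresses that the $n=1,2$ space bounds miss order $\epsilon$. The difference bound \eqref{d_initial} does not help: it controls oscillation, not size.
\item \emph{The case $K\geq4$, $4\le n+m\le K$.} The non-coincident part only gives $\sup|v^\epsilon_{n+m}(\cdot,s)|\lesssim\epsilon$. The target is $\epsilon^{1+\zeta}$ for $r-s>\tilde\Delta$, and it is below $\epsilon$ as soon as $r-s$ exceeds a small power of $\epsilon$. Nothing in your scheme produces the announced ``gain $\epsilon^{1+\zeta}$ after iteration'' for these degrees.
\item \emph{What is missing.} You need spatially resolved control. The equal-time correlations of size $\epsilon^{1-2\zeta}$ or $\epsilon$ are generated by localized sources (the windows $I_\pm$, and neighbouring pairs, see below). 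You must show that smearing them with the spread-out kernel $P^{(n)}_{\epsilon^{-2}(r-s)}$ gains a further factor. One natural route is to push the expansion down to time $0$ rather than stopping at $s$. There all $v$-functions vanish by \eqref{Ass1}, so every contribution becomes a time integral of a localized source, seen through heat kernels from both groups of points.
\item \emph{Closure.} Your closure is not yet an argument. The boundary sources raise the degree, so an induction on total degree is not well founded without a truncation or an a priori bound. Also, the small-time geometric convergence you invoke is unavailable for $r-s>\tilde\Delta$. So \eqref{bound:space_time_v_estimate2} does not follow ``directly''; it needs, for example, an iteration of the Markov property over steps of length $\tilde\Delta$, or a Volterra-type argument.
\end{itemize}

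\emph{Your Duhamel formula omits the bulk sources.} For $n\geq 2$, apply the stirring generator to $\prod_i(\eta(x_i)-\rho_\epsilon(x_i,\cdot))$. You get the $n$-label stirring generator, plus one-body terms that cancel against the time derivative of the centering via \eqref{eq:linearized}. But this holds only up to corrections supported on configurations where two labels sit at neighbouring sites $z,z+1$:
\begin{itemize}
\item a term $\epsilon^{-2}(\rho_\epsilon(z+1,\cdot)-\rho_\epsilon(z,\cdot))$ times the degree-$(n-1)$ polynomial $(\bar\eta(z+1)-\bar\eta(z))\prod_{\mathrm{others}}\bar\eta$;
\item a term $\epsilon^{-2}(\rho_\epsilon(z+1,\cdot)-\rho_\epsilon(z,\cdot))^2$ times a degree-$(n-2)$ polynomial.
\end{itemize}
The reason is that stirring particles are not independent walkers, so products of one-particle solutions do not solve the $n$-label equation. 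This is the standard mechanism behind $O(\epsilon)$ bulk correlations in nonequilibrium stirring, and it cannot be dropped. The prefactor $\epsilon^{-2}|\nabla\rho_\epsilon|$ is of order $\epsilon^{-1}$, and only $\epsilon^{-1-2\zeta}$ with the available gradient bound. Against the adjacency probability $\epsilon\lambda^{-1/2}$, it is therefore of the same order as the target. A naive insertion for $(m,n)=(1,2)$, using the target bound for $v^{\epsilon,1,1}$, already gives $\epsilon^{1-2\zeta}$ instead of $\epsilon$. These terms also lower the degree, so a scheme that gains only from higher degree does not cover them. You must treat them by moving the discrete gradient onto the kernel, or by difference bounds like Corollary~\ref{cor:vestimate3_difference}. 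The paper derives that corollary from Theorem~\ref{thm:vestimate3}, so the two estimates would have to be proved jointly.
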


From the previous result it is easy to obtain the next result.

\begin{corollary}[\cite{birmpa2025nonequilibriumfluctuationsstirringprocess}, Corollary~6.5]
\label{cor:vestimate3_difference}
Let $K\geq2$ be a fixed integer and let
$\zeta>0$ be sufficiently small.
For any initial product measure $\mu^\epsilon$
satisfying \eqref{Ass1}, there exist $N_0\geq K+3$ and
$\tilde\Delta\in(0,1)$, independent of $\epsilon$,
such that the following estimates hold uniformly
for $0<s<r\leq T$. 
\smallskip

\noindent  For $1\leq n,m\leq N_0-1$, if
$0<r-s\leq\tilde\Delta$, then
\begin{equation}
\label{bound:space_time_difference_v_estimate1}
\begin{aligned}
&\sup_{\underline y\in\Lambda_N^{m,\neq}}
\sup_{\substack{
\underline x\in\Lambda_N^{n,\neq}\\
\underline x+\underline e_1\in\Lambda_N^{n,\neq}
}}
\Big|
v_{s,r}^{\epsilon,m,n}
(\underline y,\underline x\mid\mu^\epsilon)
-
v_{s,r}^{\epsilon,m,n}
(\underline y,\underline x+\underline e_1
\mid\mu^\epsilon)
\Big|
\lesssim
\frac{\epsilon^{2-4\zeta}}
{(r-s)^{1-\zeta}}.
\end{aligned}
\end{equation}

\noindent If $r-s>\tilde\Delta$, then
\begin{equation}
\label{bound:space_time_difference_v_estimate2}
\begin{aligned}
&\sup_{\underline y\in\Lambda_N^{m,\neq}}
\sup_{\substack{
\underline x\in\Lambda_N^{n,\neq}\\
\underline x+\underline e_1\in\Lambda_N^{n,\neq}
}}
\Big|
v_{s,r}^{\epsilon,m,n}
(\underline y,\underline x\mid\mu^\epsilon)
-
v_{s,r}^{\epsilon,m,n}
(\underline y,\underline x+\underline e_1
\mid\mu^\epsilon)
\Big|
\lesssim\epsilon^{2-4\zeta}.
\end{aligned}
\end{equation}

\noindent Moreover, for $n\geq N_0$ and $m\geq1$,
we have
\begin{equation}
\label{bound:space_time_difference_v_high_degree}
\begin{aligned}
&\sup_{\underline y\in\Lambda_N^{m,\neq}}
\sup_{\substack{
\underline x\in\Lambda_N^{n,\neq}\\
\underline x+\underline e_1\in\Lambda_N^{n,\neq}
}}
\Big|
v_{s,r}^{\epsilon,m,n}
(\underline y,\underline x\mid\mu^\epsilon)
-
v_{s,r}^{\epsilon,m,n}
(\underline y,\underline x+\underline e_1
\mid\mu^\epsilon)
\Big|\\
&\qquad\lesssim
\frac{\epsilon^{2-4\zeta}}{(r-s)^{1/2}}
\mathbf{1}_{\{r-s\leq\epsilon^{\beta^*}\}}
+
\epsilon^{1+\zeta}
\mathbf{1}_{\{r-s>\epsilon^{\beta^*}\}}.
\end{aligned}
\end{equation}
Here $\underline e_1$ denotes the unit vector
in the positive first-coordinate direction.
\end{corollary}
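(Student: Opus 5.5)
The natural route is to derive Corollary \ref{cor:vestimate3_difference} from the evolution equation satisfied by the space-time $v$-function in its later time variable $r$. I would not try to difference the bounds of Theorem \ref{thm:vestimate3} directly.

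\textbf{Step 1: the evolution equation in $r$.} Fix $s$ and $\underline y$ and let $r>s$ vary. By the Markov property at time $s$, $v^{\epsilon,m,n}_{s,r}(\underline y,\cdot)$ satisfies, as a function of $\underline x$, the same closed system of equations as the space $v$-functions. Concretely, it is obtained from Kolmogorov's equation for $\mathbb{E}[\prod_j\bar\eta_s(y_j)\prod_i\bar\eta_r(x_i)]$ together with \eqref{eq:linearized}. The generator of this system is the $n$-particle stirring generator $\epsilon^{-2}L_0^{(n)}$, which involves the discrete Laplacian in each coordinate together with the exclusion correction on the diagonal. To it are added the boundary source terms of order $\epsilon^{-1}$. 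These source terms couple degree $n$ to degrees $n-1,\dots,n+K-1$ through the expansion of $D_\pm$ around $\rho_\epsilon$, and they are active only when some $x_i\in I_\pm$.

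\textbf{Step 2: Duhamel representation.} Writing this in Duhamel form with the stirring semigroup $S^{(n)}$ of $n$ particles, run for time $\epsilon^{-2}(r-s)$, gives
\[
v_{s,r}(\underline y,\underline x)=\sum_{\underline z}S^{(n)}_{\epsilon^{-2}(r-s)}(\underline x,\underline z)\,v_{s,s}(\underline y,\underline z)+\int_s^r \epsilon^{-1}\sum_{\underline z}S^{(n)}_{\epsilon^{-2}(r-\tau)}(\underline x,\underline z)\,B_\tau(\underline y,\underline z)\,d\tau .
\]
Here $v_{s,s}$ is a space $v$-function, or a product of such (after merging coinciding sites, $\bar\eta^2$ reduces to degree $\le 1$ terms). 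The term $B_\tau$ collects boundary contributions, which are $v_{s,\tau}$ functions of neighbouring degrees evaluated at configurations with one coordinate in $I_\pm$.

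\textbf{Step 3: differencing the kernel.} Taking the difference in $x_1$ moves the gradient onto the kernel. For stirring, the gradient of the $n$-particle kernel in the first coordinate obeys a standard bound: it is controlled by $(1+\sqrt{t})^{-1}$ times a Gaussian in $x_1$. This follows by comparison with independent walks via the Liggett-type duality and correlation inequalities already used in \cite{de2012truncated}.

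\textbf{Step 4: the initial term.} For the first term I use Theorem \ref{thm:vestimate2} and Corollary \ref{cor: thm for space v_initial} on $v_{s,s}$, giving $\epsilon^{1-2\zeta}$ times $\epsilon(r-s)^{-1/2}$, hence $\epsilon^{2-2\zeta}(r-s)^{-1/2}$.

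\textbf{Step 5: the boundary term.} This is the main obstacle. I insert the bounds of Theorem \ref{thm:vestimate3} for $B_\tau$, separately in the regimes $\tau-s\le\tilde\Delta$ and $\tau-s>\tilde\Delta$. Summing over $z$ in the $K$-window gives a factor $K$ times the Gaussian density at the boundary, of size $\lesssim\epsilon(r-\tau)^{-1/2}$ in macroscopic units. The kernel gradient contributes a further $\epsilon(r-\tau)^{-1/2}$, which combines with the $\epsilon^{-1}$ prefactor. The resulting time integral is
\[
\int_s^r \epsilon\,(r-\tau)^{-1/2}\cdot\epsilon(r-\tau)^{-1/2}\cdot\epsilon^{-1}\,\|B_\tau\|\,d\tau .
\]
The factor $(r-\tau)^{-1}$ is borderline, so I would split $[s,r]$ at $r-\epsilon^{2-\delta}$.
\begin{itemize}
\item On $[s,r-\epsilon^{2-\delta}]$, one gets a logarithm, which is absorbed into $\epsilon^{-\zeta}$.
\item On $[r-\epsilon^{2-\delta},r]$, the trivial bound $|\nabla S|\le 2$ gives a contribution of order $\epsilon^{1-\delta}\|B\|$.
\end{itemize}
With $\|B\|\lesssim\epsilon(\tau-s)^{-1/2}$ for low degrees, the Beta-type integral $\int_s^r(r-\tau)^{-1+\zeta}(\tau-s)^{-1/2}d\tau\sim(r-s)^{-1/2+\zeta}$ then yields $\epsilon^{2-4\zeta}(r-s)^{-1+\zeta}$ after adjusting the exponents. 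For $r-s>\tilde\Delta$ all time singularities are bounded, which gives \eqref{bound:space_time_difference_v_estimate2}.

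\textbf{Step 6: the high-degree regime.} For $n\ge N_0$ the coupling to higher degrees is truncated, and I use \eqref{newvestimate} directly: a crude bound $\epsilon^{1+\zeta}$ holds for each of the two terms of the difference once $r-s>\epsilon^{\beta^*}$. Below that threshold I use the gradient argument above with only the initial term retained. The boundary contribution there is small because the time window is short, which yields \eqref{bound:space_time_difference_v_high_degree}. The choice of $N_0\ge K+3$ ensures the hierarchy closes, since the degree increases by at most $K-1$ per boundary event.
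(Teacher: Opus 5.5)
The paper gives no proof of this statement. It imports it from the companion paper and only remarks that it follows from Theorem \ref{thm:vestimate3}. Your Duhamel-in-$r$ scheme, with Theorem \ref{thm:vestimate3} feeding the boundary sources, is a reasonable reconstruction, and Step 5 is essentially sound. There is, however, a genuine gap in Step 4, and Step 6 fails.

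\textbf{Step 4: the diagonal term.} The initial datum $v^{\epsilon,m,n}_{s,s}(\underline y,\underline z)$ is \emph{not} uniformly $O(\epsilon^{1-2\zeta})$. When $m=n$ and $\underline z$ is a permutation of $\underline y$, the reduction $\bar\eta^2=(1-2\rho_\epsilon)\bar\eta+\chi(\rho_\epsilon)$ leaves the degree-zero term $\prod_j\chi(\rho_\epsilon(y_j,s))=O(1)$. Theorem \ref{thm:vestimate2} does not cover this term, so ``$\sup|v_{s,s}|$ times $\sum_{\underline z}|\nabla S|$'' is not a valid bound. The term needs a \emph{pointwise} kernel-gradient estimate.
\begin{itemize}
\item For $n=m=1$ it equals $\chi(\rho_\epsilon(y,s))\,[S_t(x,y)-S_t(x+1,y)]$ with $t=\epsilon^{-2}(r-s)$. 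Its supremum over $x$ is of exact order $\min\{1,\epsilon^2/(r-s)\}$; for equilibrium SSEP without reservoirs the analogous correlation is exactly $\chi\,p_t(x,y)$.
\item This exceeds your claimed $\epsilon^{2-2\zeta}(r-s)^{-1/2}$ throughout $\epsilon^2\le r-s\ll\epsilon^{4\zeta}$.
\item Interpolating, $\min\{1,\epsilon^2/(r-s)\}\le\epsilon^{2-2\zeta}(r-s)^{-1+\zeta}$. So it is this diagonal term that forces the exponent $-1+\zeta$ in \eqref{bound:space_time_difference_v_estimate1}, not the boundary integral, which by your own computation gives only $(r-s)^{-1/2+\zeta}$.
\end{itemize}
Once this term is added, \eqref{bound:space_time_difference_v_estimate1}--\eqref{bound:space_time_difference_v_estimate2} follow, modulo two smaller omissions. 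First, for $n\ge2$ the stirring acting on centred products generates bulk sources on adjacent pairs, of the form $\epsilon^{-2}\nabla\rho_\epsilon\,(\bar\eta(z+1)-\bar\eta(z))$ and $\epsilon^{-2}(\nabla\rho_\epsilon)^2$ times the remaining factors. These belong in your Duhamel formula and are controlled via Corollary \ref{corollary: difference of rhos}. Second, the gradient bound of Step 3 does not follow from Liggett's inequality. That inequality compares expectations of positive-definite functionals, whereas you need $\sum_{\underline z}|\nabla S|$ and pointwise gradient bounds, so a separate coupling argument is required.

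\textbf{Step 6: the high-degree regime.} The cited estimates do not give what you use. \eqref{newvestimate} is an equal-time bound, and for $r-s\le\tilde\Delta$ Theorem \ref{thm:vestimate3} carries singular factors in $r-s$. Neither yields a crude $\epsilon^{1+\zeta}$ for $r-s>\epsilon^{\beta^*}$. The hierarchy also never closes: each boundary event can raise the degree by $K-1$, so closure cannot be the role of $N_0$.

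More seriously, the diagonal term shows that \eqref{bound:space_time_difference_v_high_degree} cannot hold when $m=n$. Take bulk sites $\underline x=\underline y$ with $y_1+1\notin\{y_2,\dots,y_n\}$, and $r-s=\epsilon^3$.
\begin{itemize}
\item With probability $1-O(n\epsilon)$, no clock touching $\{y_1,\dots,y_n,y_1+1\}$ rings in $[s,r]$.
\item At these bulk sites, $|\rho_\epsilon(\cdot,r)-\rho_\epsilon(\cdot,s)|=O(\epsilon)$.
\item Hence, by Theorem \ref{thm:vestimate2}, the difference equals $\prod_j\chi(\rho_\epsilon(y_j,s))+O(\epsilon^{1-2\zeta})$. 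This does not vanish when $\rho(\cdot,s)$ stays away from $0$ and $1$.
\item The right-hand side of \eqref{bound:space_time_difference_v_high_degree} is at most $\epsilon^{1/2-4\zeta}$, whatever $\beta^*$ is.
\end{itemize}
So in the high-degree regime you must either restrict to $m\ne n$ or carry the diagonal term explicitly. For $m\ne n$ every merged term has degree at least $1$, and your initial-term bound is then correct. Only the case $n=m=1$ is used later in the paper (Lemma \ref{lem:diff_eta} and term $\mathbf{C}$).
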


\section{Proof of Theorem \ref{thm:non_eq_flu}}

\subsection{Auxiliary martingales}\label{sec: Auxiliary martingale}
{Fix $t\in[0,T]$ and let  $H$ be a function in $C^{1,2}([0,T]\times[-1,1])$ such that
$H_s\in\mathbb D_s$ for every $s\in(0,t]$.} For now we do not assume anything else on this function, but below we will see that we will need to impose other conditions on $H$ in order to obtain  that the limit field satisfies 
 \begin{equation}\label{characterization}
{Y}_t(H)\;=\; {Y}_0(T_{t,0} H)+ W_t(H),
\end{equation}
for any $H\in\mathbb S$, where 
 $T_{t,s}$ is the solution of \eqref{eq:Robin_equation_line} and  $W_t(H)$ is a mean zero Gaussian variable with  variance  
\begin{equation}\label{eq212}
 \int_0^t\| T_{t,s}  H\|^2_{\mathbb L^2(\rho_s)}ds.
\end{equation}
From  \cite[p. 330]{kipnis2013scaling} we know  that
\begin{align}
M_t^\epsilon(H)& \;:=\;\ Y_t^\epsilon (H_t)- Y^\epsilon _0(H_0)-\int_{0}^t \Lambda_s^\epsilon(H_s)\,ds\,,\label{martingaleM} \\
N_t^\epsilon(H)& \;:=\;(M_t^\epsilon (H))^2-\int_{0}^t \Gamma^\epsilon_s(H_s)\,ds\label{quadratic}
\end{align}
are martingales with respect to  $\mathcal F_t$, where
\begin{equation*}
\begin{split}
& \Lambda_s^\epsilon(H_s)\;:=\; (\partial_s+L_\epsilon ) Y_s^\epsilon (H_s)\,,\\
& \Gamma^\epsilon(H_s)\;:=\; L_\epsilon  Y_s^\epsilon(H_s)^2-2 Y_s^\epsilon (H_s)L_\epsilon  Y_s^\epsilon(H_s)\,.
\end{split}
\end{equation*}
From the computations in Appendix \ref{ap:action_gen} we see that 
\begin{equation}\label{intpartofmart_1}
\begin{split}
\Lambda_s^\epsilon(H_s)\;&=\;  Y_s^\epsilon\Big(\frac 12\Delta_\epsilon H_s+\partial_sH_s\Big)\\
&+\frac{j}{2\sqrt \epsilon} \sum_{x\in I_+ } H_s(\epsilon x) \Big(D_+  \eta_{s} (x)-\mathbb E^\epsilon_{\mu^\eps}[D_+\eta_{s}(x)]\Big)\\
&-\frac{j}{2\sqrt \epsilon} \sum_{x\in I_- } H_s(\epsilon x) \Big(D_-  \eta_{s} (x)-\mathbb E^\epsilon_{\mu^\eps}[D_-\eta_{s}(x)]\Big).
\end{split}
\end{equation}
Note that the discrete laplacian has been defined in \eqref{eq:disc_laplacian}.
Now we isolate the boundary contribution from $Y_s^\epsilon\Big(\frac 12\Delta_\epsilon H_s+\partial_sH_s\Big)$ to  obtain:
\begin{equation}\label{intpartofmart_2}
\begin{split}
\Lambda_s^\epsilon(H_s)\;&=\; \sqrt \epsilon \sum_{x=-(N-1)}^{N-1}\frac 12\Delta_\epsilon H_s (\epsilon x)\Big(\eta_{s}(x)-\rho^\epsilon_{s}(x)\Big)+Y_s^\epsilon(\partial_sH_s)\\
&+\frac{1}{2\sqrt \epsilon}\nabla_\epsilon^+ H_s(-1) \bar{ \eta}_{s} (-N)\textcolor{red}{+}\frac{1}{2\sqrt \epsilon}\nabla_\epsilon^- H_s(1) \bar{ \eta}_{s} (N)\\
&+\frac{j}{2\sqrt \epsilon} \sum_{x\in I_+ } H_s(\epsilon x) \Big(D_+  \eta_{s} (x)-\mathbb E^\epsilon_{\mu^\eps}[D_+\eta_{s}(x)]\Big)\\
&-\frac{j}{2\sqrt \epsilon}\sum_{x\in I_- } H_s(\epsilon x) \Big(D_-  \eta_{s} (x)-\mathbb E^\epsilon_{\mu^\eps}[D_-\eta_{s}(x)]\Big).
\end{split}
\end{equation}
Above the notation $\nabla_\epsilon ^\pm $ denotes the discrete gradient: \[
\begin{aligned}
\varepsilon \nabla_{\varepsilon}^{-} H(1,t)
&:= H(\varepsilon(N-1),t) - H(1,t), \\
\varepsilon \nabla_{\varepsilon}^{+} H(-1,t)
&:= H(\varepsilon(-N+1),t) - H(-1,t).
\end{aligned}
\]
Note that the first term on the right-hand side of \eqref{intpartofmart_2} will give rise macroscopically, when sending $\epsilon \to 0$,  to the term $Y_s (\partial_u^2 H_s)$, since the discrete Laplacian will converge to the continuous one. {To prove that it is enough to apply Taylor expansion  to the function $H_s$ and use a Cauchy-Schwarz inequality and the fact that the variables are bounded. }
Now we analyze the third line of the last display. The analysis of the fourth line is completely analogous.
Observe that by a Taylor expansion of $H_s$ around $1$, the last line of last display is equal to 
\begin{equation}\label{eq:boundary_term_final}
\frac{j}{2\sqrt \epsilon} H_s(1)\sum_{x\in I_+ }  \Big(D_+  \eta_{s} (x)-\mathbb E^\epsilon_{\mu^\eps}[D_+\eta_{s}(x)]\Big)+  O_{H,K}(\sqrt \epsilon).
\end{equation}

At this point, we specify $K=2$ to make the presentation simpler. The computations for general $K$ are given in Appendix  \ref{ap:boundary_terms}.
In this setting, note that  $I_+=\{N-1,N\}$ and  $D_+\eta(N)=(1-\eta(N))$ and $D_+\eta(N-1)=(1-\eta(N-1))\eta(N)$. Therefore the leading term of \eqref{eq:boundary_term_final} writes as 
\begin{equation}\label{eq:dynkin_k=2}
\frac{j}{2\sqrt \epsilon} H_s(1)\Big(\mathbb E^\epsilon_{\mu^\eps}[\eta_s(N)\eta_s(N-1)]- \eta_s(N)\eta_s(N-1)\Big).
\end{equation}
At this point we observe that in order to continue we will rewrite the martingales in terms of the variables centered with respect to $\rho_\epsilon$ as they appear in the  $v$-functions estimates.  
To that end, note that
\begin{equation*}
\eta_s(N)\eta_s(N-1)=\bar \eta_s(N)\bar \eta_s(N-1)+\rho_\epsilon(N,s)\eta_s(N-1)+\rho_\epsilon (N-1,s)\eta_s(N)-\rho_\epsilon(N,s)\rho_\epsilon(N-1,s),
\end{equation*}
where $\bar\eta_s(x)=\eta_s(x)-\rho_\epsilon ( x,s)$. 
Recall that 
\begin{equation*}
\begin{split}
v_2^\epsilon(N-1,N,s|\mu^\epsilon)=\mathbb E^\epsilon_{\mu^\eps}[(\eta_s(N-1)-\rho_\epsilon(N-1,s))(\eta_s(N)-\rho_\epsilon(N,s))].
\end{split}
\end{equation*}
Therefore,  
\begin{equation*}
\begin{split}
\mathbb E^\epsilon_{\mu^\eps}[\eta_s(N)\eta_s(N-1)]- \eta_s(N)\eta_s(N-1)&=v_2^\epsilon(N-1,N,s|\mu^\epsilon)-  \bar\eta_s(N)\bar\eta_s(N-1)\\&+\rho_\epsilon(N,s)\Big(\rho^\epsilon_s(N-1)-\eta_s(N-1)\Big)\\&+\rho_\epsilon(N-1,s)\Big(\rho^\epsilon_s(N)-\eta_s(N)\Big).
\end{split}
\end{equation*}
Simple computations show that the  last display is equal to 
\begin{equation*}
\begin{split}
&v_2^\epsilon(N-1,N,s|\mu^\epsilon)-  \bar\eta_s(N)\bar\eta_s(N-1)+\rho_\epsilon(N,s)v_1^\epsilon(N-1,s|\mu^\epsilon)+\rho_\epsilon(N-1,s)v_1^\epsilon(N,s|\mu^\epsilon)\\&- \rho_\epsilon(N,s)[\bar\eta_s(N-1)-\bar\eta_s(N)]
-(\rho_\epsilon(N-1,s)-\rho_\epsilon(N,s))\bar\eta_s(N)\\&+2\Big(\rho(1,s)-\rho_\epsilon(N,s)\Big)\bar\eta_s(N)-2\rho(1,s)\bar\eta_s(N).
\end{split}
\end{equation*}

From this we can write  the time  integral in \eqref{martingaleM} as 
\begin{align}
&\int_{0}^t  Y_s^\epsilon(\frac 12\partial_u^2 H_s+\partial_sH_s){+}\frac{1}{2\sqrt \epsilon}\nabla_\epsilon^- H_s(1) \bar{ \eta}_{s} (N)-2\frac{j}{2\sqrt \epsilon} H_s(1) \rho(1,s)\bar\eta_s(N)ds\label{eq:bc_dy}\\
+&\int_{0}^t \frac{j}{2\sqrt \epsilon} H_s(1) v_2^\epsilon(N-1,N,s|\mu^\epsilon)ds\label{eq:v_2_term}\\ 
+&\int_{0}^t \frac{j}{2\sqrt \epsilon} H_s(1) \Big\{\rho_\epsilon(N,s)v_1^\epsilon(N-1,s|\mu^\epsilon)+\rho_\epsilon(N-1,s)v_1^\epsilon(N,s|\mu^\epsilon)\Big\}ds \label{eq:v_1_term}\\
-&\int_{0}^t \frac{j}{2\sqrt \epsilon} H_s(1) \rho_\epsilon(N,s)[\bar\eta_s(N-1)-\bar\eta_s(N)] ds\label{eq:diff_eta}\\
-&\int_{0}^t \frac{j}{2\sqrt \epsilon} H_s(1)
(\rho_\epsilon(N-1,s)-\rho_\epsilon(N,s))\bar\eta_s(N)ds\label{diff_of_rho_epsilon}\\
+&\int_{0}^t \frac{j}{2\sqrt \epsilon} H_s(1)\Big\{2\Big(\rho(1,s)-\rho_\epsilon(N,s)\Big)\bar\eta_s(N)\Big\} ds \label{degree_one_plus_hydro_limit}\\
-&\int_{0}^t \frac{j}{2\sqrt \epsilon} H_s(1) \bar\eta_s(N)\bar\eta_s(N-1)ds\label{degree_four}
\end{align}
plus similar terms regarding the left boundary as well as terms that vanish as $\eps\to 0$. Our goal is now to show that, as $\epsilon \to0$, all terms--except for the leftmost one in the first line of the last display--vanish in the $\mathbb L^2$-norm.  We note that our approach consists in taking the terms above that involve $v$-functions and estimating them directly with the Cauchy-Schwarz inequality. For the  remaining terms we use $\mathbb L^2$  bounds and estimates on space-time $v$-functions. Before that, note that  from a Taylor expansion of the function $H$ at $x=1$, we can write the two rightmost terms in \eqref{eq:bc_dy} as  
\begin{align}\begin{split}\label{eq:space_kill}
&\int_{0}^t  \frac{1}{2\sqrt \epsilon}\nabla_\epsilon^- H_s(1) \bar{ \eta}_{s} (N)-2\frac{j}{2\sqrt \epsilon} H_s(1) \rho(1,s)\bar\eta_s(N)ds\\=&\int_{0}^t  \frac{1}{2\sqrt \epsilon}\Big\{{-}\partial_u H_s(1)+\frac{\epsilon}{2} \partial_u ^2H_s(\xi)\Big\} \bar{ \eta}_{s} (N)-2\frac{j}{2\sqrt \epsilon} H_s(1) \rho(1,s)\bar\eta_s(N)ds
\end{split}\end{align}
where $\xi\in(1-\epsilon, 1)$. Note that  $H_s\in\mathbb D_s$, i.e., in particular
 $\partial_u H_s(1)=-2j\rho(1,s)H_s(1)$ for any $s\in[0,t]$. The last identity is simply \eqref{eq:bc_test_function} for the particular case $K=2$. Then the last display is equal to 
\begin{align}\begin{split}
{-}\int_{0}^t  \frac{\sqrt \epsilon }{2} \partial_u^2H_s(\xi) \bar{ \eta}_{s} (N)ds.
\end{split}\end{align}
Since the occupation variables are bounded, 
the $\mathbb L^2$-norm of last display can be bounded by a term of order $O(\sqrt \epsilon)$, and it vanishes as $\epsilon \to0$.

\subsubsection{ Estimation of \eqref{eq:v_2_term} and \eqref{eq:v_1_term}}

We estimate the $\mathbb{L}^2$-norm of the terms \eqref{eq:v_2_term} and \eqref{eq:v_1_term}  using Theorem \ref{thm:vestimate2}  along with the fact that $|\rho_\epsilon(\cdot, s)|\leq 1$.
Note that, from Theorem \ref{thm:vestimate2}, precisely from \eqref{newvestimate} with $n=2,$ we bound the term   \eqref{eq:v_2_term}  from above by
\begin{equation*}
\begin{split}
&\int_{0}^t \frac{j}{2\sqrt \epsilon} H_s(1) v_2^\epsilon(N-1,N,s|\mu^\epsilon)ds\lesssim \frac{\epsilon^{1-2\zeta}tC(H)}{\sqrt \epsilon} .
\end{split}
\end{equation*}
Above and below, $C(H)$ is a constant that depends on $H$.
Similarly, we prove that \eqref{eq:v_1_term} is bounded as 
\begin{equation*}
\int_{0}^t \frac{j}{2\sqrt \epsilon} H_s(1) \Big\{\rho_\epsilon(N,s)v_1^\epsilon(N-1,s|\mu^\epsilon)+\rho_\epsilon(N-1,s)v_1^\epsilon(N,s|\mu^\epsilon)\Big\}ds\lesssim \epsilon^{1/2-2\zeta}tC(H).
\end{equation*}
For $\zeta<1/4$, the $\mathbb L^2$- norm of the above terms vanish as $\eps\to0$.

\subsubsection{ Estimation of \eqref{eq:diff_eta}-\eqref{degree_four}} 
Unlike the terms in \eqref{eq:v_2_term} and \eqref{eq:v_1_term},  which are deterministic, the remaining terms do not involve space correlation functions $v_{n}^{\epsilon}$ but instead, they depend on products of  $\bar\eta$'s.  
Since these terms are more intricate than the previous ones,  we now provide a comprehensive analysis of estimating \eqref{eq:diff_eta}–\eqref{degree_four}.   Lemmas \ref{lem:diff_eta}--\ref{lem:estimation of v2} provide the precise estimates  which are proved by primarily  using Theorem \ref{thm:vestimate3} and  results from \cite{de2011current} highlighted in the subsequent analysis. Each estimate is presented in detail below, demonstrating that the terms  \eqref{eq:diff_eta}–\eqref{degree_four} vanish in the $\mathbb{L}^2$-norm as $\epsilon\to0$. This is provided under a sufficiently small  parameter $\zeta>0$ appearing  in Lemmas \ref{lem:diff_eta}--\ref{lem:estimation of v2}.

The estimate of the  $\mathbb{L}^2$-norm  of \eqref{eq:diff_eta} is given in the next lemma.
\begin{lemma}\label{lem:diff_eta}
Let $t\in(0,T]$. For any $x,y\in I_{+}$ (similarly for $x,y\in I_{-}$) with $|x-y|=1$ and for any $\zeta>0$, it holds 
\begin{eqnarray*}
&&\hspace{-.7cm}\mathbb{E}^{\epsilon}_{\mu^\epsilon} \left[\left(\int_{0}^t \frac{j}{2\sqrt \epsilon} H_s(1) \rho_\epsilon(x,s)[\bar\eta_s(y)-\bar\eta_s(x)]ds\right)^2\right]\lesssim  \eps^{1-4\zeta}\left(t^{1+\zeta }\mathbf 1_{t<1}+t^{2}\mathbf 1_{t\geq 1}\right).
\end{eqnarray*} 
\end{lemma}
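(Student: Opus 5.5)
We expand the square as a double time integral and reduce everything to space-time $v$-functions. Writing $g(s):=\frac{j}{2}H_s(1)\rho_\epsilon(x,s)$, which is bounded uniformly in $\epsilon$ since $H$ is smooth and $|\rho_\epsilon|\le 1$, the left-hand side equals
\begin{equation*}
\frac{2}{\epsilon}\int_0^t\int_0^r g(s)g(r)\,\mathbb{E}^{\epsilon}_{\mu^\epsilon}\Big[\big(\bar\eta_s(y)-\bar\eta_s(x)\big)\big(\bar\eta_r(y)-\bar\eta_r(x)\big)\Big]\,ds\,dr .
\end{equation*}
The expectation is a combination of four space-time $v$-functions $v^{\epsilon,1,1}_{s,r}(\cdot,\cdot\mid\mu^\epsilon)$. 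Grouping them as
\begin{equation*}
\big[v^{\epsilon,1,1}_{s,r}(y,y)-v^{\epsilon,1,1}_{s,r}(y,x)\big]-\big[v^{\epsilon,1,1}_{s,r}(x,y)-v^{\epsilon,1,1}_{s,r}(x,x)\big]
\end{equation*}
exhibits each bracket as a discrete gradient in the time-$r$ variable, because $|x-y|=1$. Note that $(y,y)$ and $(x,x)$ are allowed here, since the two sites live at different times. This is exactly the situation of Corollary \ref{cor:vestimate3_difference} with $n=m=1$.

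We split the $s$-integral according to whether $r-s\le\tilde\Delta$ or $r-s>\tilde\Delta$. In the first region \eqref{bound:space_time_difference_v_estimate1} bounds the integrand by $\epsilon^{2-4\zeta}(r-s)^{-1+\zeta}$. In the second region \eqref{bound:space_time_difference_v_estimate2} gives $\epsilon^{2-4\zeta}$. Multiplying by the prefactor $\epsilon^{-1}$ yields $\epsilon^{1-4\zeta}$ in both cases. The time integrals are
\begin{equation*}
\int_0^t\int_0^r (r-s)^{-1+\zeta}\,ds\,dr=\frac{t^{1+\zeta}}{\zeta(1+\zeta)}
\end{equation*}
and $\int_0^t\int_0^r ds\,dr=t^2/2$. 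For $t<1$ we use $(r-s)^{-1+\zeta}\ge 1$ when $r-s\le 1$, so the whole integral is bounded by $Ct^{1+\zeta}$. For $t\ge1$ both pieces are bounded by $C t^2$. This gives the claimed $\epsilon^{1-4\zeta}(t^{1+\zeta}\mathbf 1_{t<1}+t^2\mathbf 1_{t\ge1})$, with the constant depending on $\zeta$, $H$ and $j$. The same argument applies to $x,y\in I_-$.

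The main obstacle is justifying the use of Corollary \ref{cor:vestimate3_difference}. That result is stated for ordered times $s<r$ and for tuples of distinct sites at each time, so we must check that it covers the configurations $(y,y)$ and $(x,x)$. Here each time carries a single site, so the distinctness constraint is vacuous and the corollary applies.

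A second subtlety is the choice of which time to differentiate in. The ordering $s<r$ after symmetrization forces the gradient onto the later time $r$, which is the variable the corollary controls. If one wanted the gradient on the earlier variable instead, the regrouping $[v(y,y)-v(x,y)]-[v(y,x)-v(x,x)]$ would need the analogous difference bound in $\underline y$. We therefore keep the gradient in the time-$r$ variable.

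Finally, the diagonal $s=r$ has measure zero, and the singularity $(r-s)^{-1+\zeta}$ is integrable. No separate treatment of equal times is needed.
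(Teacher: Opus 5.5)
Your proposal is correct and follows the paper's proof: you expand the square into a double time integral and write the covariance as four $v^{\epsilon,1,1}_{s,r}$ terms, grouped into discrete gradients in the later-time variable. You then apply Corollary \ref{cor:vestimate3_difference} separately on $\{r-s\le\tilde\Delta\}$ and $\{r-s>\tilde\Delta\}$, exactly as the paper does; your explicit evaluation of the time integrals and your check that coinciding sites at different times are admissible only fill in details the paper leaves implicit.
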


{\begin{remark}
We note that the previous lemma also holds when the points $x,y\notin I_{\pm}$, because it is a consequence of the bounds of Corollary \ref{cor:vestimate3_difference} which hold more generally. 
\end{remark}}
\begin{proof} Observe that by squaring the time integral and using Fubini's theorem, we can bound the expectation appearing in the statement as 
\begin{eqnarray}\label{first calculation for difference of etas}
&&\hspace{-2cm}\mathbb{E}^{\epsilon}_{\mu^\epsilon} \left[\left(\int_{0}^t \frac{j}{2\sqrt \epsilon} H_s(1) \rho_\epsilon(x,s)[\bar\eta_s(y)-\bar\eta_s(x)]ds\right)^2\right]\nonumber\\
&&\lesssim\frac{1}{\epsilon}\int_0^t dr\int_0^rds\Big|\mathbb{E}^{\epsilon}_{\mu^\epsilon} \left[\left(\bar\eta_s(x)-\bar\eta_s(y)\right) \left(\bar\eta_r(x)-\bar\eta_r(y)\right)\right]\Big|\nonumber\\
&&=\frac{1}{\epsilon}\int_0^t dr\int_0^rds \left|v^{\epsilon,1,1}_{s,r}(x,x)-v^{\epsilon,1,1}_{s,r}(x,y)-v^{\epsilon,1,1}_{s,r}(y,x)+v^{\epsilon,1,1}_{s,r}(y,y)\right|\nonumber\\
&&\lesssim \frac{1}{\epsilon}\int_0^t dr\int_0^rds\sup_{\substack{x,y\in\Lambda_N\\x+1\in\Lambda_N}}|v^{\epsilon,1,1}_{s,r}(y, x|\mu^{\epsilon})-v^{\epsilon,1,1}_{s,r}(y, x+1|\mu^{\epsilon})|\nonumber\\
&&\leq \frac{1}{ \epsilon}\int_{0}^t dr \int_0^r  ds{\sup_{\substack{x,y\in\Lambda_N\\x+1\in\Lambda_N}}}|v^{\epsilon,1,1}_{s,r}(y, x|\mu^{\epsilon})-v^{\epsilon,1,1}_{s,r}(y, x+1|\mu^{\epsilon})|{\bf 1}_{r-s\leq \tilde\Delta}\nonumber\\
&&+\frac{1}{ \epsilon}\int_{0}^t dr \int_0^r  ds\sup_{\substack{x,y\in\Lambda_N\\x+1\in\Lambda_N}}|v^{\epsilon,1,1}_{s,r}(y, x|\mu^{\epsilon})-v^{\epsilon,1,1}_{s,r}(y, x+1|\mu^{\epsilon})|{\bf 1}_{r-s> \tilde\Delta}\nonumber\\ &&\lesssim\eps^{1-4\zeta} t^{1+\zeta}+\eps^{1-4\zeta} t^{2}\lesssim \eps^{1-4\zeta} \left(t^{1+\zeta }\mathbf 1_{t<1}+t^{2}\mathbf 1_{t\geq 1}\right),
\end{eqnarray}
where $\tilde\Delta$ is given in Theorem \ref{thm:vestimate3}, and the last line follows from Corollary \ref{cor:vestimate3_difference}. 
\end{proof}
Now we head up to estimating the  $\mathbb{L}^2$-norm  of \eqref{diff_of_rho_epsilon} with the use of the next lemma. 
\begin{lemma}\label{lem:diff_of_rho_epsilon}
Let $t\in(0,T]$. For any $\zeta>0$, it holds
\begin{eqnarray}
&&\mathbb{E}^{\epsilon}_{\mu^\epsilon} \left[\left(\int_{0}^t \frac{j}{2\sqrt \epsilon} H_s(1)(\rho_\epsilon(N-1,s)-\rho_\epsilon(N,s))\bar\eta_s(N)ds\right)^2\right]\lesssim \epsilon^{1-4\zeta}t^{2}.
\end{eqnarray} 
\end{lemma}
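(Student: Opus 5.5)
We square the time integral and use Fubini, exactly as in the proof of Lemma \ref{lem:diff_eta}. The difference is that the prefactor $\rho_\epsilon(N-1,s)-\rho_\epsilon(N,s)$ is now deterministic, so we pull it out of the expectation:
\begin{equation*}
\mathbb{E}^{\epsilon}_{\mu^\epsilon}\Big[\Big(\int_0^t \tfrac{j}{2\sqrt\epsilon}H_s(1)(\rho_\epsilon(N-1,s)-\rho_\epsilon(N,s))\bar\eta_s(N)ds\Big)^2\Big]
\lesssim \frac{\|H\|_0^2}{\epsilon}\int_0^t dr\int_0^r ds\, |\delta_s|\,|\delta_r|\,\big|v^{\epsilon,1,1}_{s,r}(N,N|\mu^\epsilon)\big|,
\end{equation*}
with $\delta_s:=\rho_\epsilon(N-1,s)-\rho_\epsilon(N,s)$. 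The plan is to control the two gradients $\delta_s,\delta_r$ by the discrete gradient estimate of Corollary \ref{corollary: difference of rhos}. We control the space-time correlation either trivially by $1$ or by Theorem \ref{thm:vestimate3} with $n=m=1$, which gives $|v^{\epsilon,1,1}_{s,r}|\lesssim \epsilon(r-s)^{-1/2}$ for $r-s\le\tilde\Delta$ and $\lesssim\epsilon$ otherwise.

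\textbf{Step 1: the gradient bound.} From Corollary \ref{corollary: difference of rhos} (the discrete gradient of $\rho_\epsilon$, available under \textbf{Assumption 1}) I expect a bound of the form
\[
|\delta_s|\lesssim \epsilon + \epsilon^{-1}\int_0^s\frac{d\lambda}{\sqrt{\epsilon^{-2}\lambda}+1}\sum_{z\in I_+}G_{\epsilon^{-2}\lambda}(N,z).
\]
Since $G_{\epsilon^{-2}\lambda}\lesssim (\epsilon^{-2}\lambda)^{-1/2}\wedge 1$ and $|I_+|=K$ is fixed, the integral is of order $\epsilon\log(1+s/\epsilon^2)$. This yields $|\delta_s|\lesssim \epsilon^{1-\zeta}$ uniformly in $s\in[0,T]$, with the logarithm absorbed into $\epsilon^{-\zeta}$. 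This matches the expected scaling: macroscopically $\rho$ is smooth, so its microscopic gradient is $O(\epsilon)$ up to the boundary-layer correction.

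\textbf{Step 2: combine.} With $|\delta_s||\delta_r|\lesssim\epsilon^{2-2\zeta}$, the trivial bound $|v^{\epsilon,1,1}_{s,r}|\le 1$ already gives
\[
\frac{1}{\epsilon}\,\epsilon^{2-2\zeta}\,\frac{t^2}{2}\lesssim \epsilon^{1-2\zeta}t^2\le\epsilon^{1-4\zeta}t^2 .
\]
This is the claimed bound, with room to spare. Using Theorem \ref{thm:vestimate3} instead would give an even better power of $\epsilon$, since $\int_0^r (r-s)^{-1/2}ds$ is integrable. The resulting time factor is then $t^{3/2}$ or $t^2$; we only need $t^2$ uniformly for $t\le T$, so the crude route suffices. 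The extra slack between $\epsilon^{1-2\zeta}$ and $\epsilon^{1-4\zeta}$ lets us absorb a weaker form of the gradient estimate, for instance $|\delta_s|\lesssim\epsilon^{1-2\zeta}$.

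\textbf{Main obstacle.} The only delicate point is Step 1. We must make sure that the gradient estimate of Corollary \ref{corollary: difference of rhos} holds uniformly up to time $s=0$ and at the boundary site $N$, with no singular factor like $s^{-1/2}$. Under \textbf{Assumption 1} with smooth $u_0$, the initial gradient is $O(\epsilon)$, and the boundary source term contributes only the logarithmic factor computed above. If the corollary instead gives a time-singular bound, say $\epsilon\, s^{-1/2+\zeta}$, the plan still works. In that case we would use $|v^{\epsilon,1,1}_{s,r}|\le1$ and check that $\int_0^t\int_0^r s^{-1/2+\zeta}r^{-1/2+\zeta}\,ds\,dr\lesssim t^{1+2\zeta}\lesssim t^2$ for $t\le T$ (adjusting constants by $T$). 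Either way the conclusion is $\lesssim\epsilon^{1-4\zeta}t^2$.
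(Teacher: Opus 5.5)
Your proposal is correct and is essentially the paper's proof: you pull the deterministic factors $\delta_s,\delta_r$ out of the expectation, bound each by $C_T\epsilon^{1-2\zeta}$ via Corollary~\ref{corollary: difference of rhos}, and use $|v^{\epsilon,1,1}_{s,r}(N,N)|\le 1$ to get $\epsilon^{-1}\epsilon^{2-4\zeta}t^2$; that corollary already states this bound uniformly for $0\le t\le T$, so your heuristic derivation in Step 1 is not needed. The only slip is in your hypothetical time-singular fallback, where $t^{1+2\zeta}\lesssim t^2$ fails as $t\to 0$ for small $\zeta$, but that branch never arises.
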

\begin{proof}
The lemma follows directly from Corollary \ref{corollary: difference of rhos}, i.e., 
\begin{eqnarray}\label{calculation for difference of rhos}
&&\mathbb{E}^{\epsilon}_{\mu^\epsilon} \left[\left(\int_{0}^t \frac{j}{2\sqrt \epsilon} H_s(1)\Big(\rho_\epsilon(N-1,s)-\rho_\epsilon(N,s)\Big)\bar\eta_s(N)ds\right)^2\right]\nonumber\\
&&\hspace{1cm}\lesssim\frac{1}{\epsilon}\mathbb{E}^{\epsilon}_{\mu^\epsilon} \left[\left(\int_0^t \Big(\rho_\epsilon(N-1,s)-\rho_\epsilon(N,s)\Big)\bar\eta_s(N) ds \right)^2\right]\nonumber\\
&&\hspace{1cm}=2\frac{1}{\epsilon}\int_0^t dr\int_0^rds \Big(\rho_\epsilon(N-1,s)-\rho_\epsilon(N,s)\Big)\Big(\rho_\epsilon(N-1,r)-\rho_\epsilon(N,r)\Big)v^{\epsilon,1,1}_{s, r}(N,N)\nonumber\\
%&=&\int_0^t dr\int_0^rds\mathbb{E}^{\epsilon}_{\mu^\epsilon} \left[\left(\bar\eta(x,s)-\bar\eta(y,s)\right) \bar\eta(x,r)\right]\nonumber\\
%&&\hskip1.5cm-\int_0^t dr\int_0^rds\mathbb{E}^{\epsilon}_{\mu^\epsilon} \left[\left(\bar\eta(x,s)-\bar\eta(y,s)\right) \bar\eta(y,r)\right]\nonumber\\
&&\hspace{1cm}\leq c\frac{1}{\epsilon}\int_0^t dr\int_0^rdsC_T^2\epsilon^{2-4\zeta}\lesssim \epsilon^{1-4\zeta}t^{2}
\end{eqnarray}
where we use that  $|v^{\epsilon,1,1}_{s, r}(N,N)|\leq 1$.
\end{proof}
 The estimation of \eqref{degree_one_plus_hydro_limit} is proved in the following lemma.
\begin{lemma}
\label{lem:use_of_hydrodynamic_limit}
Let $t\in(0,T]$. For any $\zeta>0$, it holds
\begin{eqnarray}
&&\lim_{\epsilon\to 0}\mathbb{E}^{\epsilon}_{\mu^\epsilon} \left[\left(\int_{0}^t \frac{j}{2\sqrt \epsilon} H_s(1)\Big(\rho(1,s)-\rho_\epsilon(N,s)\Big)\bar\eta_s(N)ds\right)^2\right]=0.
\end{eqnarray} 
\end{lemma}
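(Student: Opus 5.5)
The plan is to follow the same pattern as Lemma \ref{lem:diff_of_rho_epsilon}, using the hydrodynamic convergence \eqref{eq:hydro_conv} in place of Corollary \ref{corollary: difference of rhos}. Write $a_\epsilon(s):=\rho(1,s)-\rho_\epsilon(N,s)$, which is a deterministic function bounded by $1$. Squaring the time integral and applying Fubini as in \eqref{calculation for difference of rhos}, the expectation is bounded by
\[
\frac{C(H)}{\epsilon}\int_0^t dr\int_0^r ds\, |a_\epsilon(s)|\,|a_\epsilon(r)|\,\big|v^{\epsilon,1,1}_{s,r}(N,N)\big| .
\]
The key point is that the prefactor $\epsilon^{-1}$ is exactly compensated by Theorem \ref{thm:vestimate3}. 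For $n+m=2$ it gives $|v^{\epsilon,1,1}_{s,r}(N,N)|\lesssim \epsilon (r-s)^{-1/2}$ when $r-s\le\tilde\Delta$, and $\lesssim\epsilon$ otherwise. Hence the quantity above is at most
\[
C(H)\int_0^t dr\int_0^r ds\,|a_\epsilon(s)||a_\epsilon(r)|\big((r-s)^{-1/2}+1\big).
\]
The kernel $(r-s)^{-1/2}$ is integrable, and $|a_\epsilon|\le 1$, so it only remains to show that $a_\epsilon\to 0$ in a sense strong enough for dominated convergence.

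By \eqref{eq:hydro_conv}, $\sup_{t_0\le s\le t}|a_\epsilon(s)|\to0$ for every fixed $t_0>0$, since $\epsilon N=1$. Split the integration region according to whether $s<t_0$ or $s\ge t_0$. We expect this step to be the main obstacle, because the convergence in Theorem \ref{hydrodynamiclimit} is not uniform down to time $0$.

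\begin{itemize}
\item On $\{s\ge t_0\}$ we also have $r\ge t_0$, so the contribution is bounded by $C(H)\big(\sup_{[t_0,t]}|a_\epsilon|\big)^2(t^{3/2}+t^2)$, which tends to $0$.
\item On $\{s<t_0\}$ we use $|a_\epsilon(s)||a_\epsilon(r)|\le1$. The contribution is then at most $C(H)\int_0^{t_0}ds\int_s^t \big((r-s)^{-1/2}+1\big)dr\lesssim C(H)\,t_0\,(t^{1/2}+t)$.
\end{itemize}

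Taking first $\limsup_{\epsilon\to0}$ and then $t_0\to0$ gives the claim. Alternatively, if one wants a rate, one could use Assumption~1 and the smoothness of $u_0$ to argue that $a_\epsilon(s)\to0$ uniformly on $[0,t]$. This should hold because compatible smooth data removes the initial layer, but checking that compatibility condition is more delicate, so the $t_0$-splitting argument is the safer route.

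The same argument applies verbatim to the left-boundary analogue with $\rho(-1,s)-\rho_\epsilon(-N,s)$ and $\bar\eta_s(-N)$.
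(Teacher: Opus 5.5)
Your proof is correct and follows the paper's argument. The paper also uses Theorem~\ref{thm:vestimate3} with $n=m=1$ to bound the expectation by the integral of $|a_\epsilon(s)||a_\epsilon(r)|$ against the integrable kernel $(r-s)^{-1/2}\mathbf 1_{\{r-s\le\tilde\Delta\}}+\mathbf 1_{\{r-s>\tilde\Delta\}}$. It then concludes by dominated convergence, using boundedness and the convergence for each $s>0$ from \eqref{eq:hydro_conv}. Your splitting at $t_0$ is simply an explicit version of that dominated-convergence step, and it has the small merit of showing why the non-uniformity of \eqref{eq:hydro_conv} near time zero is harmless.
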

\begin{proof}
By Theorem \eqref{thm:vestimate3} with $n=m=1$, we can bound from above the expectation appearing in the statement of the lemma  by a constant times
\[
\int_0^t dr\int_0^r ds\,
\left|\rho(1,s)-\rho^\varepsilon(N,s)\right|
\left|\rho(1,r)-\rho^\varepsilon(N,r)\right|
\left[
(r-s)^{-1/2}
\mathbf{1}_{\{r-s\leq\widetilde{\Delta}\}}
+
\mathbf{1}_{\{r-s>\widetilde{\Delta}\}}
\right].
\]
By the hydrodynamic limit in \eqref{eq:hydro_conv}, for every $s>0$, the fact that  both $\rho$ and $\rho^\varepsilon$ are bounded  and that the function
\[
(s,r)\longmapsto
(r-s)^{-1/2}
\mathbf{1}_{\{r-s\leq\widetilde{\Delta}\}}
+
\mathbf{1}_{\{r-s>\widetilde{\Delta}\}}
\]
is integrable on the set
$\{(s,r):0<s<r<t\},$
it is enough to apply dominated convergence theorem. 
\end{proof}

Finally, we estimate the  $\mathbb{L}^2$-norm  of \eqref{degree_four} which is a consequence of the next lemma.
\begin{lemma}\label{lem:estimation of v2}
Let $t\in(0,T]$. Then, for any $\zeta>0$ there is a constant $c>0$ such that
\begin{eqnarray}
&&\mathbb{E}^{\epsilon}_{\mu^\epsilon} \left[\left(\int_0^t\frac{j}{2\sqrt \epsilon} H_s(1)\bar\eta_s(N)\bar\eta_s(N-1)\,ds\right)^2\right]\lesssim \epsilon^{\zeta}\left(t^{1+\zeta }\mathbf 1_{t<1}+t^{2}\mathbf 1_{t\geq 1}\right).
\end{eqnarray} 
\end{lemma}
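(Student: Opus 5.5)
We follow the scheme of Lemma \ref{lem:diff_eta}. Expanding the square and using Fubini's theorem together with the symmetry in $(s,r)$, and bounding $|H_s(1)|$ by $\|H\|_\infty$, the expectation in the statement is at most a constant times
\begin{equation*}
\frac{1}{\epsilon}\int_0^t dr\int_0^r ds\,\Big|\mathbb{E}^{\epsilon}_{\mu^\epsilon}\big[\bar\eta_s(N)\bar\eta_s(N-1)\bar\eta_r(N)\bar\eta_r(N-1)\big]\Big|
=\frac{1}{\epsilon}\int_0^t dr\int_0^r ds\,\big|v^{\epsilon,2,2}_{s,r}\big((N-1,N),(N-1,N)\,|\,\mu^\epsilon\big)\big|.
\end{equation*}
Both $(N-1,N)$ at time $s$ and at time $r$ consist of distinct sites, so Theorem \ref{thm:vestimate3} applies with $m=n=2$. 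This is the case $n+m\geq 4$, $n=m$, which is exactly where the estimate is better than order $\epsilon$.

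We then split the inner integral according to $r-s\leq\tilde\Delta$ or $r-s>\tilde\Delta$, as in the proof of Lemma \ref{lem:diff_eta}.

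\emph{Short time differences.} On $\{r-s\leq\tilde\Delta\}$, \eqref{bound:space_time_v_estimate1} gives $|v^{\epsilon,2,2}_{s,r}|\lesssim\epsilon^{1+\zeta}(r-s)^{-1+\zeta}$. The resulting contribution is bounded by
\begin{equation*}
\epsilon^{\zeta}\int_0^t dr\int_0^r (r-s)^{-1+\zeta}ds=\frac{\epsilon^{\zeta}}{\zeta}\int_0^t r^{\zeta}dr\lesssim \epsilon^{\zeta}t^{1+\zeta}.
\end{equation*}
This is where the time exponent $-1+\zeta>-1$ in Theorem \ref{thm:vestimate3} is essential: the singularity at $r=s$ must remain integrable.

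\emph{Long time differences.} On $\{r-s>\tilde\Delta\}$, \eqref{bound:space_time_v_estimate2} gives $|v^{\epsilon,2,2}_{s,r}|\lesssim\epsilon^{1+\zeta}$, so this contribution is at most $\epsilon^{\zeta}t^2$. Moreover it vanishes unless $t>\tilde\Delta$.

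\emph{Combining.} For $t<1$ we have $t^2\leq t^{1+\zeta}$, and for $t\geq 1$ we have $t^{1+\zeta}\leq t^2$. Hence the sum of the two pieces is $\lesssim\epsilon^{\zeta}\big(t^{1+\zeta}\mathbf 1_{t<1}+t^2\mathbf 1_{t\geq1}\big)$, which is the claimed bound. The constant $c$ in the statement depends on $\zeta$ (through the factor $1/\zeta$), on $\|H\|_\infty$ and on $j$.

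There is no real obstacle once Theorem \ref{thm:vestimate3} is available: the whole difficulty has been pushed into the degree-four space-time estimate with exponent $\epsilon^{1+\zeta}$. The only points to check are that the case $m=n=2$ falls under the $n=m$ branch of \eqref{bound:space_time_v_estimate1}, and that the $1/\epsilon$ prefactor coming from the $1/\sqrt{\epsilon}$ normalisation is beaten by $\epsilon^{1+\zeta}$. Note that the space estimate alone would not suffice: Theorem \ref{thm:vestimate2} only gives order $\epsilon^{1-2\zeta}$ for $n=2$, which the $1/\epsilon$ prefactor would not beat.
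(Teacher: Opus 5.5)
Your proposal is correct and follows essentially the same route as the paper: expand the square via Fubini to reduce to $\epsilon^{-1}\int_0^t dr\int_0^r ds\,|v^{\epsilon,2,2}_{s,r}|$, split at $r-s=\tilde\Delta$, and apply Theorem \ref{thm:vestimate3} with $n=m=2$ on each piece. One small point: your final comparison ($t^2\leq t^{1+\zeta}$ for $t<1$, and the reverse for $t\geq 1$) tacitly uses $\zeta\leq 1$, which is harmless since $\zeta$ is taken small throughout.
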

\begin{proof}
The estimate is a direct result of Theorem \ref{thm:vestimate3} for $n=m=2$, i.e., 
\begin{equation*}
\begin{split}
\mathbb{E}^{\epsilon}_{\mu^\epsilon} \left[\left(\int_0^t\frac{j}{2\sqrt \epsilon} H_s(1)\bar\eta_s(N)\bar\eta_s(N-1)\, ds\right)^2\right]
&\lesssim\frac{1}{ \epsilon}\int_{0}^t dr \int_0^r  ds |v_{s, r}^{\epsilon,2,2}(N-1, N, N-1, N)|\nonumber\\
&\leq \frac{1}{ \epsilon}\int_{0}^t dr \int_0^r  ds\sup_{\underline{x}\in\Lambda_N^{2\neq}}|v_{s, r}^{\epsilon,2,2}(\underline{x}, N-1, N)|\nonumber\\
&\leq \frac{1}{ \epsilon}\int_{0}^t dr \int_0^r  ds\sup_{\underline{x}\in\Lambda_N^{2\neq}}|v_{s, r}^{\epsilon,2,2}(\underline{x}, N-1, N)|{\bf 1}_{r-s\leq \tilde\Delta}\nonumber\\
&+\frac{1}{ \epsilon}\int_{0}^t dr \int_0^r  ds\sup_{\underline{x}\in\Lambda_N^{2\neq}}|v_{s, r}^{\epsilon,2,2}(\underline{x}, N-1, N)|{\bf 1}_{r-s> \tilde\Delta}\nonumber\\
&\lesssim \epsilon^{\zeta}t^{1+\zeta}
+\epsilon^{\zeta}t^2
\lesssim \epsilon^{\zeta}
\left(t^{1+\zeta}\mathbf{1}_{t<1}
+t^2\mathbf{1}_{t\geq 1}\right).
\end{split}\end{equation*}
\end{proof}
We now complete the closure of the martingale decomposition for $K=2$ given in \eqref{intpartofmart_1} i.e.
\begin{align}
M_t^\epsilon(H)& \;:=\;\ Y_t^\epsilon (H_t)- Y^\epsilon _0(H_0)-\int_{0}^t  Y_s^\epsilon\Big(\frac 12\partial_u^2 H_s+\partial_s H_s\Big)ds.\end{align}
We observe that the last display can also be generalized to any $K$ using the results of Appendix  \ref{ap:action_gen}, together with the results of  Theorem \ref{thm:vestimate2} and \ref{thm:vestimate3}.
Now we discuss the conditions that we have to impose on the function $H$ in order to recover \eqref{characterization}. 
Observe that for $H$ a $C^2$ function, if we take the function $T_{t,s}H$ where $T_{t,s}$ solves \eqref{eq:Robin_equation_line}, then  from the first equality in  \eqref{eq:Robin_equation_line}, the rightmost term on the first line of last display vanishes, since $\partial_s T_{t,s}H+\frac{1}{2}\partial_u^2T_{t,s}H=0$. Above, we have already used the boundary conditions  in \eqref{eq:Robin_equation_line} to control the contribution from the boundary generator.  Up to here we have that 
\begin{align}
M_t^\epsilon(H)=\ Y_t^\epsilon (H)- Y^\epsilon _0(T_{t,0}H)
\end{align}
 plus terms that vanish as $\epsilon\to 0$. 
 Applying the same martingale central limit argument used in the proof of Corollary \ref{cor:conv_mart} to the time-dependent test function $H_s=T_{t,s}H$, we obtain
the convergence of 
$M_t^\epsilon(T_{t,\cdot}H)$ to a  mean-zero Gaussian random variable $W_t(H)$ which has  variance
\[
\mathbb E[W_t(H)^2]
=
\int_0^t\|T_{t,s}H\|_{L^2(\rho_s)}^2\,ds.
\]
where $\rho(u,s)$  is the solution of the hydrodynamic equation \eqref{eq:Robin_equation} and the $\mathbb L^2$-norm is defined in \eqref{eq:norm}. Moreover, if we assume also that the sequence $\{Y_t^\epsilon\}_\epsilon$  is tight, which will be proved in the next section, then taking the limit through a subsequence $\{Y_t^\epsilon\}_{\epsilon_k}$, we see that the limit points satisfy \eqref{characterization} 
  as desired.

\section{Tightness}\label{sec:tightness}

In order to prove tightness, we recall that
$\mathbb S=C^\infty([-1,1])$ is a nuclear Fr\'echet space when endowed
with the seminorms defined in \eqref{semi-norm}. As a consequence, by
Mitoma's criterion (that we recall below), it is enough to show tightness of the sequence of
real-valued processes $\{Y_t^\epsilon(H)\}_{\epsilon>0}$, for every
$H\in\mathbb S$.

\begin{theorem}[Mitoma's criterion - Theorem 4.1 of \cite{mitoma1983tightness}]
A sequence of processes $\{X^\epsilon_t; t \in [0,T]\}_{\epsilon>0}$ in $\mathcal{D}([0,T], \mathbb {S}')$ is tight with respect to the Skorohod topology if, and only if, for every $H \in \mathbb{S}$, the sequence of real-valued processes $\{X^\epsilon_t(H); t \in [0,T]\}_{\epsilon}$ is tight with respect to the Skorohod topology of $\mathcal {D}([0,T], \mathbb{R})$.
\end{theorem}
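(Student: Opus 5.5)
Necessity is immediate. For fixed $H\in\mathbb S$ the evaluation map $\pi_H:\mathcal D([0,T],\mathbb S')\to\mathcal D([0,T],\mathbb R)$, $X\mapsto X_\cdot(H)$, is continuous for the Skorohod topologies, since $\mathbb S'$ carries the strong dual topology and $\phi\mapsto\phi(H)$ is continuous on it. Continuous images of tight families are tight, so $\{X^\epsilon_\cdot(H)\}_\epsilon$ is tight. All the work is in sufficiency, and I would follow the classical route of Mitoma, exploiting that $\mathbb S$ is nuclear Fr\'echet. Concretely, I realise $\mathbb S$ as a projective limit of Hilbert spaces $\mathcal H_k$, the completions under Hilbertian norms equivalent to the system \eqref{semi-norm}, chosen so that for every $k$ there is $k'>k$ with the embedding $\mathcal H_{k'}\hookrightarrow\mathcal H_k$ Hilbert--Schmidt. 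Then $\mathbb S'=\bigcup_k\mathcal H_{-k}$.

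\emph{Step 1 (uniform equicontinuity, the key step).} Fix $\delta>0$. I define, on $\mathbb S$,
\[
\Phi(H):=\sup_\epsilon \mathbb E\Big[\sup_{t\le T}|X^\epsilon_t(H)|\wedge 1\Big].
\]
I first need $\Phi(H)\to0$ as $H\to0$ along each line $\lambda H$ with $\lambda\to0$; this follows from tightness of $\{X^\epsilon(H)\}$, which gives $\sup_\epsilon\mathbb P(\sup_t|X_t^\epsilon(H)|>M)\to0$ as $M\to\infty$. The function $\Phi$ is lower semicontinuous on $\mathbb S$: it is a supremum of functions that are continuous, by dominated convergence, because each $X^\epsilon_t$ is a continuous functional and the paths are c\`adl\`ag. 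It is also subadditive and symmetric. The closed sets $\{H:\Phi(\lambda H)\le\delta\ \forall|\lambda|\le 1/n\}$ cover $\mathbb S$, so Baire's theorem applied to the Fr\'echet space $\mathbb S$ gives one with nonempty interior. Subadditivity then yields $k$ and $c>0$ such that $\Phi(H)\le 2\delta$ whenever $\|H\|_{\mathcal H_k}\le c$. This is the step I expect to be the main obstacle: the lower semicontinuity of the supremum over $t$ must be handled carefully, and the passage from the Baire conclusion to a seminorm ball needs care.

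\emph{Step 2 (Hilbert--Schmidt transfer).} I pick $k'>k$ with $\mathcal H_{k'}\hookrightarrow\mathcal H_k$ Hilbert--Schmidt, and an orthonormal basis $\{e_j\}$ of $\mathcal H_{k'}$ lying in $\mathbb S$. Step 1 gives a quadratic bound $\mathbb E[\sup_t|X^\epsilon_t(H)|^2\wedge 1]\lesssim\delta+c^{-2}\|H\|^2_{\mathcal H_k}$. I would obtain it via the standard Gaussian-averaging trick: average $1-\cos$ against a Gaussian measure on finite-dimensional spans. Summing over $j$ then gives, outside an event of probability $O(\delta)$,
\[
\sup_{t\le T}\|X^\epsilon_t\|^2_{\mathcal H_{-k'}}=\sup_t\sum_j X^\epsilon_t(e_j)^2\lesssim \sum_j\|e_j\|^2_{\mathcal H_k}<\infty,
\]
uniformly in $\epsilon$. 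Applying the same argument with $k''>k'$, the ball of $\mathcal H_{-k'}$ is relatively compact in $\mathcal H_{-k''}$, and so the marginals at each time are tight there.

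\emph{Step 3 (modulus of continuity).} I control the Skorohod modulus $w'$ in $\mathcal H_{-k''}$ by splitting $X^\epsilon$ into its projection on the span of $e_1,\dots,e_J$ and the tail. For the finite-dimensional part, the modulus is controlled by the one-dimensional tightness of the finitely many $X^\epsilon(e_j)$, via a union bound. The tail is bounded by $\sup_t\big(\sum_{j>J}X^\epsilon_t(e_j)^2\big)^{1/2}$, and this is uniformly small in probability for $J$ large by Step 2 applied at the level $k''$. Tightness in $\mathcal D([0,T],\mathcal H_{-k''})$ then follows from the standard criterion (compact containment plus modulus), and the continuous injection $\mathcal H_{-k''}\hookrightarrow\mathbb S'$ transfers tightness to $\mathcal D([0,T],\mathbb S')$.
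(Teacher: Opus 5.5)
The paper gives no proof of this statement; it only quotes Mitoma's Theorem~4.1. Your outline follows Mitoma's route: Baire category for a uniform equicontinuity bound, a Minlos-type Gaussian average against a Hilbert--Schmidt embedding for compact containment, then finite-dimensional approximation. Steps~1 and~2 are sound in substance.

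\begin{itemize}
\item Lower semicontinuity in Step~1 is actually the easy part. The map $H\mapsto\sup_t|x_t(H)|\wedge 1$ is a supremum of continuous functions. Fatou's lemma and the supremum over $\epsilon$ preserve lower semicontinuity on the metrizable space $\mathbb S$.
\item In Step~2 the supremum over $t$ passes outside the Gaussian average, since $\sup_t\int\leq\int\sup_t$.
\item You still have to apply the quadratic bound to $\lambda H$ with $\lambda$ small, to make the exceptional probability $O(\delta)$.
\item You should take $\{e_j\}$ with span dense in $\mathbb S$ (Gram--Schmidt on a dense sequence). Only then is $\sum_j x(e_j)^2$ really $\|x\|^2_{\mathcal H_{-k'}}$.
\end{itemize}

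The genuine gap is the finite-dimensional part of Step~3. The modulus $w'$ of an $\mathbb R^J$-valued path uses a single partition for all coordinates, so it is not controlled by the coordinatewise moduli. Tightness of each $X^\epsilon(e_j)$ in $\mathcal D([0,T],\mathbb R)$ therefore does not give tightness of $(X^\epsilon(e_1),\dots,X^\epsilon(e_J))$ in $\mathcal D([0,T],\mathbb R^J)$, and no union bound repairs this.

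For example, take $x^\epsilon_t=(\mathbf 1_{[1/2,T]}(t),\mathbf 1_{[1/2+\epsilon,T]}(t))$. Both coordinates converge in $\mathcal D([0,T],\mathbb R)$, yet $w'(x^\epsilon,\theta)\geq 1$ for every $\theta>\epsilon$. Your Step~3 uses only coordinatewise tightness, so it would ``prove'' a false statement.

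What you must use is that the hypothesis holds for every $H$, hence by linearity for all linear combinations. In the example, the difference of the coordinates is the spike $\mathbf 1_{[1/2,1/2+\epsilon)}(t)$, which is not tight; this is exactly what the hypothesis excludes. There are two ways to exploit this.
\begin{itemize}
\item Tightness of $X^\epsilon(e_i)$ and $X^\epsilon(e_i+e_j)$, $i,j\leq J$, implies tightness in $\mathcal D([0,T],\mathbb R^J)$. Only after this is a union bound over compact sets legitimate.
\item More economically, use Jakubowski's criterion: compact containment plus tightness of $f(X^\epsilon)$ for a point-separating family of continuous functions closed under addition, here $\{x\mapsto x(H):H\in\mathbb S\}$. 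This lets you drop Step~3 entirely. Step~2 already supplies compact containment, because closed balls of $\mathcal H_{-k'}$ are compact in $\mathcal H_{-k''}$, hence in $\mathbb S'$.
\end{itemize}

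Two smaller corrections to Step~3.
\begin{itemize}
\item The level $k$ from Step~1, and hence $k',k''$, depends on $\delta$. What you obtain is, for each $\delta$, a compact subset of $\mathcal D([0,T],\mathcal H_{-k''(\delta)})$ of mass at least $1-O(\delta)$. That suffices in $\mathbb S'$, but it is not tightness in one fixed $\mathcal D([0,T],\mathcal H_{-k''})$, where the processes need not even live.
\item The tail should be bounded deterministically on the Step~2 event, not by rerunning the Gaussian bound. Let $f_j$ and $s_j\to 0$ be the singular vectors and singular values of $\mathcal H_{k''}\hookrightarrow\mathcal H_{k'}$. Then $\sum_{j>J}x(f_j)^2\leq s_{J+1}^2\|x\|^2_{\mathcal H_{-k'}}$. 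Rerunning the Gaussian bound at level $k''$ only gives probabilities with an $O(\delta)$ floor that does not vanish as $J\to\infty$.
\end{itemize}
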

Recall \eqref{martingaleM}. In order to show that $\{Y^\epsilon_t(H)\}_{\epsilon>0}$ is tight, it is enough to show that  \begin{equation*}
\{Y^\epsilon_0(H)\}_{\epsilon>0}
 	\ , \ \{[M^\epsilon _t(H)]_{t \geq 0}\}_{\epsilon>0} 
		 \textrm{ and } \left\{\int_0^t  L_\epsilon Y^\epsilon_s(H) ds \right\}_{\epsilon>0}
\end{equation*} are tight. 
\subsection{Initial time}
In this subsection  we show that $\{Y^\epsilon_0(H)\}_{\epsilon>0}$ is tight.
To this end, it is enough to show that
\begin{equation*}
    \lim_{A \to \infty} \limsup_{\epsilon\to 0} \mathbb{P}_{\mu^\epsilon} [|Y_0^\epsilon(H)| > A] = 0.
\end{equation*}

From \eqref{density field}  and  from
 Markov's inequality, for every $A > 0$ and for every $\epsilon>0$, 
\begin{align*}
\mathbb{P}_{\mu^\epsilon} [|Y_0^\epsilon(H)| > A] &\leq \frac{1}{A^2} \mathbb{E}^\eps_{\mu^\epsilon}[|Y_0^\epsilon(H)|^2] \\
&= \frac{\epsilon}{A^2}  \Big( \sum_{x\in \Lambda_N} [H(\epsilon x) ]^2 \mathbb{E}^\eps_{\mu^\epsilon} [(\eta_{0}(x)-\rho^\epsilon_0(x))^2] + \sum_{\substack{x, y\in \Lambda_N\\ y \neq x}} H(\epsilon x)H(\epsilon y)C^{2,\epsilon}_{0}(x,y) ,
\end{align*}
where $C_t^{2,\epsilon}(x,y)$ was defined in \eqref{eq:correlation}. Note that $C^{2,\epsilon}_{0}(x,y)\equiv0$ given the choice of our initial measure, see \textbf{Assumption 1}. Therefore, the rightmost term in last display is equal to zero, and  
the leftmost term in last display is of order $O(A^{-2})$ and so the limit is proved.  
In fact, we observe that we can prove the convergence of the fluctuation field at time $t=0$ to a Gaussian process by using characteristic functions, in the following  manner:
\begin{equation}\begin{split}
\log  \mathbb E^\eps_{\mu^\eps}[\exp\{iY_0^\eps(H)\}]&=\log \prod_{x\in\Lambda_N} \mathbb E^\eps_{\mu^\eps}[\exp\{i\sqrt \eps H(\eps x )(\eta_{0}(x)-\rho^\epsilon_0(x) )\}]  \\&=\sum_{x\in\Lambda_N}\log  \mathbb E^\eps_{\mu^\eps}[\exp\{i\sqrt \eps H(\eps x )(\eta_{0}(x)-\rho^\epsilon_0(x) )\}]  
\end{split}\end{equation}
and by Taylor expansion the last display can be written as :
\begin{equation}
\sum_{x\in\Lambda_N}\log \Big( 1- \frac{\eps}{2} H^2(\eps x )E^\eps_{\mu^\eps}[(\eta_{0}(x)-\rho^\epsilon_0(x) )^2]+O(\eps^{3/2})\Big),  
\end{equation}
but again from Taylor expansion, from {\bf{Assumption 1}} and since $u_0$ is a smooth function the last display converges, as $\eps\to0$, to {$-\frac{1}{2}\int_{-1}^1H^2(u)\chi(u_0(u))du.$} This shows that the characteristic function of the initial density field $Y_0(H)$ coincides with the one of the Gaussian process with the variance given above. 
\subsection{The integral terms}

To prove tightness of the integral terms we use the Kolmogorov-Centsov's criterion, that we state here for convenience.
\begin{proposition}[Kolmogorov-Centsov criterion - Problem 2.4.11 of \cite{karatzas2014brownian}]
\label{KolCen}

A sequence $\{X^\epsilon_t ; t \in [0, T]\}_{ \epsilon>0}$ of
continuous, real-valued, stochastic processes is tight with respect to the uniform topology of $\mathcal C([0, T];\mathbb{R})$ if the sequence of real-valued random variables $\{X^\epsilon_0\}_{\epsilon>0}$ is tight and there are constants $K, \gamma_1,\gamma_2 > 0$ such that, for any $t,s \in [0, T]$ and any $\epsilon>0$, it holds 
\begin{equation*}
    \mathbb{E}^\eps_{\mu^\epsilon}[|X^\epsilon_t - X^\epsilon_s|^{\gamma_1}] \leq K |t-s|^{1+\gamma_2}.
\end{equation*}
\end{proposition}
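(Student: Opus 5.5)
The statement is the classical Kolmogorov–Centsov tightness criterion, and the plan is to reduce it to a uniform Hölder-moment bound obtained by dyadic chaining. The starting point is the standard characterization of relative compactness in $\mathcal C([0,T];\mathbb R)$ (Arzelà–Ascoli combined with Prokhorov's theorem; Theorem 2.4.10 of \cite{karatzas2014brownian}). It says that $\{X^\epsilon\}_{\epsilon>0}$ is tight if and only if two conditions hold. First, $\{X^\epsilon_0\}_{\epsilon>0}$ is tight, which is assumed. Second, for every $\eta>0$,
\[
\lim_{\delta\to0}\sup_{\epsilon>0}\mathbb P\Big(w(X^\epsilon,\delta)>\eta\Big)=0,
\qquad w(f,\delta):=\sup_{|t-s|\le\delta,\ s,t\in[0,T]}|f(t)-f(s)|.
\]
So the whole task is to control the modulus of continuity uniformly in $\epsilon$. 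I will do this through a bound that uses only the constants $K,\gamma_1,\gamma_2,T$ and never the particular process.

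Fix $\alpha\in(0,\gamma_2/\gamma_1)$ and rescale time so that $T=1$. Let $D_n=\{k2^{-n}:0\le k\le 2^n\}$ be the dyadic points of level $n$, and set
\[
M^\epsilon_n:=\max_{1\le k\le 2^n}\big|X^\epsilon_{k2^{-n}}-X^\epsilon_{(k-1)2^{-n}}\big|.
\]
Bounding the maximum by the sum and applying the hypothesis to each increment gives
\[
\mathbb E\big[(M^\epsilon_n)^{\gamma_1}\big]\le\sum_{k=1}^{2^n}\mathbb E\big[|X^\epsilon_{k2^{-n}}-X^\epsilon_{(k-1)2^{-n}}|^{\gamma_1}\big]\le K\,2^{-n\gamma_2}.
\]
Next comes the usual chaining step. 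Take $s<t$ in $D=\bigcup_n D_n$ with $2^{-(m+1)}<t-s\le 2^{-m}$. Expanding $s$ and $t$ in binary and walking from level $m$ downward gives
\[
|X^\epsilon_t-X^\epsilon_s|\le 2\sum_{n\ge m}M^\epsilon_n .
\]
Hence
\[
H^\epsilon:=\sup_{s\ne t\in D}\frac{|X^\epsilon_t-X^\epsilon_s|}{|t-s|^\alpha}\le 2^{1+\alpha}\sum_{n\ge0}2^{n\alpha}M^\epsilon_n .
\]
Since $X^\epsilon$ is continuous, the same bound holds with $D$ replaced by $[0,1]$.

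The next step is to bound $\mathbb E[(H^\epsilon)^{\gamma_1}]$ uniformly in $\epsilon$, and the argument depends on $\gamma_1$. If $\gamma_1\ge1$, I use Minkowski's inequality in $L^{\gamma_1}$:
\[
\|H^\epsilon\|_{L^{\gamma_1}}\lesssim\sum_n 2^{n\alpha}K^{1/\gamma_1}2^{-n\gamma_2/\gamma_1}<\infty .
\]
If $\gamma_1<1$, I use the subadditivity of $x\mapsto x^{\gamma_1}$ instead:
\[
\mathbb E[(H^\epsilon)^{\gamma_1}]\lesssim\sum_n 2^{n\alpha\gamma_1}K2^{-n\gamma_2}<\infty .
\]
In both cases the series converges because $\alpha\gamma_1<\gamma_2$. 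This yields a constant $C=C(K,\gamma_1,\gamma_2,\alpha,T)$, independent of $\epsilon$, with $\mathbb E[(H^\epsilon)^{\gamma_1}]\le C$.

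To conclude, note that $w(X^\epsilon,\delta)\le\delta^\alpha H^\epsilon$. Markov's inequality then gives
\[
\sup_\epsilon\mathbb P\big(w(X^\epsilon,\delta)>\eta\big)\le\frac{C\,\delta^{\alpha\gamma_1}}{\eta^{\gamma_1}}\longrightarrow0
\quad\text{as }\delta\to0 .
\]
Together with the assumed tightness of $X^\epsilon_0$, this proves the claim. The only delicate point is the chaining inequality together with keeping every constant free of $\epsilon$; in particular one must remember to handle $\gamma_1<1$ separately, where $L^{\gamma_1}$ is not a normed space. Everything else is routine.
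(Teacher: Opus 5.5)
Your proposal is correct. The chaining bound $H^\epsilon\le 2^{1+\alpha}\sum_{n\ge0}2^{n\alpha}M^\epsilon_n$, the $\epsilon$-uniform estimate on $\mathbb E[(H^\epsilon)^{\gamma_1}]$ (with the case $\gamma_1<1$ correctly handled by subadditivity), and the reduction to the modulus-of-continuity criterion are all sound.

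The paper gives no proof of its own; it simply cites Problem 2.4.11 of \cite{karatzas2014brownian}. The textbook solution is the same combination of Kolmogorov--Centsov dyadic chaining with Theorem 2.4.10. There the chaining is run through probability bounds on the dyadic increments rather than your moment bound on the H\"older constant, which is a purely cosmetic difference.
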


\subsubsection{The bulk term}
We first treat the bulk term, i.e., the term of the form
$$\mathbf{Z}_t^\epsilon(H):=\int_0^t Y_s^\epsilon (\Delta_\epsilon H)ds.$$
 We note that from \eqref{density field} for any $H\in\mathbb S$, we have that 
\begin{equation*}
Y^\epsilon _s(H)\;=\;\sqrt \epsilon\sum_{x\in\Lambda_N}H(\epsilon x)
\Big(\eta_{s}(x)-\rho_\epsilon(x,s)\Big)+\sqrt \epsilon\sum_{x\in\Lambda_N}H(\epsilon x)
\Big(\rho_\epsilon(x,s)-\rho^\epsilon_s(x)\Big),
\end{equation*}
where $\rho_\eps(s,x)$ is the solution to \eqref{eq:linearized}. 

From \eqref{eq:important_1}, the rightmost term in last display equals to 
\begin{equation*}
-\sqrt \epsilon\sum_{x\in\Lambda_N}H(\epsilon x) v_{{1}}^{\epsilon}(x,s|\mu^{\epsilon})
\end{equation*}
and now we treat separately the two cases.

Since $H\in\mathbb S$, the discrete
Laplacian satisfies
\begin{equation}\label{eq:discrete_laplacian_l1}
\sum_{x\in\Lambda_N}
|\Delta_\epsilon H(\epsilon x)|
\leq \frac{C_H}{\epsilon}.
\end{equation}
Indeed, at interior sites, Taylor expansion gives
$|\Delta_\epsilon H(\epsilon x)|\leq C_H$,
while at the two boundary sites
$|\Delta_\epsilon H(\epsilon x)|\leq C_H/\epsilon$.
Thus, \eqref{eq:discrete_laplacian_l1} holds without
imposing any boundary conditions on $H$. First note that if
\begin{equation}
\mathbf{X}_t^\epsilon=\int_{0}^t ds\, \sqrt \epsilon\sum_{x\in\Lambda_N}\Delta_\epsilon H(\epsilon x)
\Big(\eta_{s}(x)-\rho_\epsilon(x,s)\Big),
\end{equation}
then by squaring the integral we have that 
\begin{equation}
\mathbb{E}^{\epsilon}_{\mu^\epsilon}\left[|\mathbf{X}_t^\epsilon-\mathbf{X}_s^\epsilon|^2\right]=2\epsilon\int_{s}^t\int_{s}^rdr\,d\tau\sum_{x,y\in \Lambda_N}\Delta_{\epsilon}H(\epsilon x)\Delta_{\epsilon}H( \epsilon y){v^{\epsilon,1,1}_{\tau,r}(y, x)}.
\end{equation}

By \eqref{eq:discrete_laplacian_l1} and
Theorem \ref{thm:vestimate3}, the last display
is bounded from above by: 
\begin{eqnarray}\label{tightness_est}
&&\epsilon^{-1}\int_{s}^t\int_{s}^rdr\,d\tau \sup_{x,y} |v^{\epsilon,1,1}_{\tau,r}( x,y)|\\ &\leq&\epsilon^{-1}\int_{s}^t\int_{s}^rdr\,d\tau \sup_{x,y} |v^{\epsilon,1,1}_{\tau,r}( x,y)|\left({\bf 1}_{r-\tau\leq \tilde\Delta}+{\bf 1}_{r-\tau> \tilde\Delta}\right)\nonumber\\
& \lesssim&\left((t-s)^{3/2}+(t-s)^2\right).
\end{eqnarray}
Now, for 
\begin{equation*}
\mathbf{Y}^\epsilon_t=\int_0^t ds\sqrt \epsilon\sum_{x\in\Lambda_N}\Delta_\epsilon H(\epsilon x) v_{1}^{\epsilon}(x,s|\mu^{\epsilon})
\end{equation*}
we evaluate as
\begin{equation}
\mathbb{E}^{\epsilon}_{\mu^\epsilon}\left[|\mathbf{Y}_t^\epsilon-\mathbf{Y}_s^\epsilon|^2\right]=2\epsilon\int_{s}^t\int_{s}^rdr\,d\tau\sum_{x,y\in \Lambda_N}\Delta_{\epsilon}H(\epsilon x)\Delta_{\epsilon}H\,(\epsilon y) v_{1}^{\epsilon}(x,r|\mu^{\epsilon})v_{1}^{\epsilon}(y,\tau|\mu^{\epsilon})
\end{equation}
and as above, by using Theorem \ref{thm:vestimate2}, this last display is bounded from above by 
\begin{equation}
\epsilon^{-1}\int_{s}^t\int_{s}^rdr\,d\tau \sup_{x,y} |v_1^\epsilon(x,r|\mu^\epsilon)| |v_1^\epsilon(y,\tau|\mu^\epsilon) |\lesssim\epsilon^{1-4\zeta}(t-s)^{2}.
\end{equation}
\subsubsection{The boundary terms}

For the boundary terms we define
\begin{equation*}
\widetilde{\mathbf{X}}_t^\epsilon=\int_{0}^t ds\epsilon^{-1/2}\left(\bar\eta_{s}(x)-\bar\eta_{s}(y)\right)
\quad \textrm{and}\quad 
\widehat{\mathbf{X}}_t^\epsilon=\int_{0}^t ds\epsilon^{-1/2}\prod_{x\in\mathfrak X}\bar\eta_{s}(x)
\end{equation*}
where $x,y\in I_\pm$ with $|x-y|=1$,
and $\mathfrak X\subset I_\pm$.
Similar to what we did to treat the bulk term and \eqref{tightness_est}
\begin{eqnarray}
\mathbb{E}^{\epsilon}_{\mu^\epsilon}\left[|\widetilde{\mathbf{X}}_t^\epsilon-\widetilde{\mathbf{X}}_s^\epsilon|^2\right]&=&\epsilon^{-1}\int_{s}^t\int_{s}^rdr\,d\tau\mathbb{E}^{\epsilon}_{\mu^\epsilon}\left[\left(\bar\eta_{r}(x)-\bar\eta_{r}(y)\right)\left(\bar\eta_{\tau}(x)-\bar\eta_{\tau}(y)\right)\right]\nonumber\\
&\lesssim&  \epsilon^{-1}\int_{s}^t\int_{s}^rdr\,d\tau \sup_{w\in\{x,y\}}\left(|v^{\epsilon,1,1}_{\tau,r}(y,w)|+|v^{\epsilon,1,1}_{\tau,r}(x,w)|\right)
\nonumber\\
&\lesssim&\left((t-s)^{3/2}+(t-s)^2\right).
\end{eqnarray}
{The term
$\mathbb{E}^{\epsilon}_{\mu^\epsilon}
\left[
|\widehat{\mathbf{X}}_t^\epsilon-\widehat{\mathbf{X}}_s^\epsilon|^2
\right]$
is estimated analogously using Theorem~\ref{thm:vestimate3}, with
$n=m=1$ when $|\mathfrak X|=1$ and the case $n+m\geq 4$ when
$|\mathfrak X|\geq 2$.}

Finally, we consider the contribution corresponding to
\eqref{degree_one_plus_hydro_limit}. Up to a bounded
multiplicative coefficient, this term is of the form
\[
R_t^\epsilon
=
\frac{1}{\sqrt{\epsilon}}
\int_0^t
\big(\rho(1,r)-\rho_\epsilon(N,r)\big)
\bar\eta_r(N)\,dr.
\]
By Theorem~\ref{thm:vestimate3} with $n=m=1$,
and using the boundedness of $\rho$ and $\rho_\epsilon$,
we obtain, uniformly in $\epsilon$,
\[
\begin{aligned}
\mathbb E_{\mu^\epsilon}^\epsilon
\left[|R_t^\epsilon-R_s^\epsilon|^2\right]
&\lesssim
\int_s^t dr\int_s^r d\tau\,
\left[
(r-\tau)^{-1/2}
\mathbf 1_{\{r-\tau\leq\widetilde\Delta\}}
+
\mathbf 1_{\{r-\tau>\widetilde\Delta\}}
\right]\\
&\lesssim
(t-s)^{3/2}+(t-s)^2.
\end{aligned}
\]
Since $R_0^\epsilon=0$, the preceding estimate
and the Kolmogorov--Centsov criterion imply
that $\{R^\epsilon\}_\epsilon$ is tight.
The corresponding term at the left boundary
is treated analogously.

This concludes the proof of tightness for all the integral terms.
In the next subsection, we establish the tightness of the sequence of martingales. More precisely, we show that the sequence converges to a mean-zero Gaussian process, which in particular implies its tightness.

\subsection{Convergence of the sequence of martingales}
In this section we prove the convergence of the sequence of martingales $\left\{ M_t^\epsilon(H); t\in [0,T]\right\}_{\epsilon>0}$ using  Theorem VIII.3.12  in \cite{jacod2013limit}. In fact, we use the statement of that theorem as in Proposition 4 of \cite{ahmed2022microscopic}:   

\begin{proposition}[\cite{ahmed2022microscopic}]\label{jacod_shir}
Let $t \in [0,T] \mapsto C_t  \in [0, \infty)$ be a deterministic continuous  function of the time $t$. Let $\{M_t^\epsilon ;  t \in [0,T]\}_{\epsilon>0}$ be a sequence of square-integrable real-valued martingales with c\`adl\`ag trajectories defined on a probability space $(\Omega,\mathcal  F, \mathbb P)$. Let $\{\langle M^\epsilon\rangle_t;  t \in [0,T]\}$ denote the quadratic variation of $\{M_t^\epsilon\; ; \; t \in [0,T]\}$. Assume that
\begin{itemize}
\item[i)] For each $\epsilon>0$, the quadratic variation process $\{\langle M^\epsilon\rangle_t;  t \in [0,T]\}$ has continuous trajectories $\mathbb P$ a.s.;
\item[ii)] the maximal jump satisfies
\begin{equation}\label{eq:maximal_jump}
\lim_{\epsilon\to 0} \mathbb E\Big[ \sup_{0 \le s \le T} \big|M_s^\epsilon-M_{s-}^\epsilon\big| \Big] =0;
\end{equation}
Above $\mathbb E$ denotes the expectation w.r.t. $\mathbb P$.
\item[iii)] For each $t \in [0,T]$, the sequence of random variables $\{\langle M^\epsilon
\rangle_t \}_{\epsilon>0}$ converges in probability to the deterministic path  $\{C_t \; ;\;  t \in [0,T]\}$.
\end{itemize}
Then the sequence $\{M_t^\epsilon;  t \in [0,T]\}_{\epsilon>0}$ converges in law in $\mathcal D ([0,T], \mathbb R)$ to a martingale $\{M_t; t \in [0,T]\}$ with quadratic variation $t \mapsto C_t$. Moreover $\{M_t; t \in [0,T]\}$ is a mean zero Gaussian process. 
\end{proposition}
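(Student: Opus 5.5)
This is a citation of a theorem from Jacod--Shiryaev, Theorem VIII.3.12, in the form of Proposition 4 of \cite{ahmed2022microscopic}. The plan is therefore to reduce it to the martingale functional central limit theorem rather than to reprove it from scratch.

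\textbf{Step 1: tightness.} First I would show that $\{M^\epsilon\}_\epsilon$ is tight in $\mathcal D([0,T],\mathbb R)$ and that every limit point is continuous. By Aldous' criterion it suffices to control, for stopping times $\tau\le T$ and small $\delta>0$,
\[
\mathbb E\big[(M^\epsilon_{\tau+\delta}-M^\epsilon_\tau)^2\big]=\mathbb E\big[\langle M^\epsilon\rangle_{\tau+\delta}-\langle M^\epsilon\rangle_\tau\big].
\]
By iii) and the continuity of $C$, together with the monotonicity of $\langle M^\epsilon\rangle$ and a Dini-type argument, $\langle M^\epsilon\rangle\to C$ uniformly on $[0,T]$ in probability. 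The right-hand side is then small, after truncating on the event where $\langle M^\epsilon\rangle_T$ is close to $C_T$. Condition ii) makes the limit points C-tight, i.e.\ supported on continuous paths.

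\textbf{Step 2: identification of the limit.} Along a subsequence let $M^\epsilon\Rightarrow M$. I would show that $M$ and $M_t^2-C_t$ are martingales by passing to the limit in
\[
\mathbb E\big[(M^\epsilon_t-M^\epsilon_s)\Phi\big]=0,
\qquad
\mathbb E\big[\big((M^\epsilon_t)^2-\langle M^\epsilon\rangle_t-(M^\epsilon_s)^2+\langle M^\epsilon\rangle_s\big)\Phi\big]=0,
\]
where $\Phi$ is a bounded continuous function of the path up to time $s$. Passing to the limit requires uniform integrability of $(M^\epsilon_t)^2$.

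\textbf{Step 3: conclusion.} Since $M$ is continuous with deterministic quadratic variation $C$, Lévy's characterization (via a time change by $C$) shows that $M$ is a Gaussian process with independent increments, mean zero and variance $C_t$. Uniqueness of this law gives convergence of the full sequence.

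\textbf{Main obstacle: uniform integrability in Step 2.} Convergence in probability in iii) alone does not give $L^1$ convergence of $\langle M^\epsilon\rangle_t$. I would handle this by localization. Stop at $\sigma^\epsilon=\inf\{t:\langle M^\epsilon\rangle_t> C_T+1\}$; then $\mathbb P(\sigma^\epsilon\le T)\to0$. Burkholder--Davis--Gundy bounds $\mathbb E\big[\sup_t |M^\epsilon_{t\wedge\sigma^\epsilon}|^4\big]$ in terms of $\mathbb E\big[\langle M^\epsilon\rangle^2_{\sigma^\epsilon}\big]$ and the jump contribution. That term is controlled by the quadratic-variation bound plus the maximal-jump term, which is small by ii) for $L^1$, though for fourth moments it needs an extra truncation of jumps. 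This is exactly where the continuity of $\langle M^\epsilon\rangle$ in i) is used: it ensures $\langle M^\epsilon\rangle_{\sigma^\epsilon}\le C_T+1$, with no overshoot.

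In our application I would only verify i)--iii) for $M^\epsilon_t(H)$, with $C_t=\int_0^t\|H_s\|^2_{L^2(\rho_s)}ds$:
\begin{itemize}
\item i) holds because $\langle M^\epsilon\rangle_t=\int_0^t\Gamma^\epsilon_s\,ds$.
\item ii) holds because each jump of $M^\epsilon$ is $O(\sqrt\epsilon)$.
\item iii) is the substantive check. I would compute $\Gamma^\epsilon_s$ explicitly; its bulk part is $\sum_x\epsilon(\nabla H)^2(\eta(x)-\eta(x+1))^2$, plus boundary flip terms. Convergence to $C_t$ in $L^2$ would then follow by replacing $(\eta(x)-\eta(x+1))^2$ and the products $D_\pm\eta$ by their means, using Theorems \ref{thm:vestimate2}--\ref{thm:vestimate3} and the hydrodynamic limit, as in the boundary estimates above.
\end{itemize}
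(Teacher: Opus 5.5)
Your route differs from the paper's. The paper gives no argument of its own and simply invokes \cite[Theorem VIII.3.12]{jacod2013limit}, whereas you sketch a self-contained proof: Aldous tightness, identification of the limit through the two martingale identities, then L\'evy's characterization. Steps 1 and 3 are sound. In Step 1, though, do the truncation through the stopped process $M^\epsilon_{\cdot\wedge\sigma^\epsilon}$ and bounded convergence, or through Lenglart's inequality, rather than by restricting an expectation identity to an event that is not $\mathcal F_\tau$-measurable.

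The genuine gap is the one you flag: uniform integrability of $(M^\epsilon_{t\wedge\sigma^\epsilon})^2$ in Step 2. The ``extra truncation of jumps'' is not available from ii), which only gives $\mathbb E[\sup_s|\Delta M^\epsilon_s|]\to0$. Stopping at the first large jump keeps that jump in the stopped process. Removing the large jumps changes the predictable bracket by the compensator of $\sum_s(\Delta M^\epsilon_s)^2\mathbf 1_{\{|\Delta M^\epsilon_s|>\eta\}}$, which ii) does not control.

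In fact the step cannot be closed. Take $\langle M^\epsilon\rangle$ to be the predictable bracket, which is what i) and the application $\langle M^\epsilon\rangle_t=\int_0^t\Gamma^\epsilon_s\,ds$ require. Then the conclusion can fail under i)--iii) as written. Take a Brownian motion $W$, $c\in(0,1)$, $\lambda_\epsilon\to0$, $h_\epsilon=\sqrt{c/\lambda_\epsilon}$, an independent Poisson process $N^\epsilon$ of rate $\lambda_\epsilon$, and
\[
M^\epsilon_t=\sqrt{1-c}\,W_t+h_\epsilon\big(N^\epsilon_t-\lambda_\epsilon t\big).
\]
Then $\langle M^\epsilon\rangle_t=(1-c)t+h_\epsilon^2\lambda_\epsilon t=t$, and $\mathbb E[\sup_{s\le T}|\Delta M^\epsilon_s|]=h_\epsilon(1-e^{-\lambda_\epsilon T})\le\sqrt{c\lambda_\epsilon}\,T\to0$. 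Yet $M^\epsilon\Rightarrow\sqrt{1-c}\,W$, whose quadratic variation is $(1-c)t\neq t$. Here $M^\epsilon_t$ is bounded in $L^2$, so the limit is still a martingale. But $(M^\epsilon_t)^2$ is not uniformly integrable, which is precisely where your Step 2 needs it.

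What is missing is a Lindeberg-type control of the jumps. One option is $\mathbb E[\sup_{s\le T}|\Delta M^\epsilon_s|^2]\to0$, the usual hypothesis in the predictable-bracket form of the martingale FCLT. Another, as in the paper's application, is a deterministic bound $\sup_s|\Delta M^\epsilon_s|\le\delta_\epsilon\to0$; there $\delta_\epsilon=2\sqrt\epsilon\|H\|_\infty$.

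Under the deterministic bound your localization does close. $L^\epsilon=[M^\epsilon]-\langle M^\epsilon\rangle$ is a martingale with $\langle L^\epsilon\rangle\le\delta_\epsilon^2\langle M^\epsilon\rangle$, so
\[
\mathbb E\big[[M^\epsilon]_{T\wedge\sigma^\epsilon}^2\big]\le 2(C_T+1)^2+2\delta_\epsilon^2(C_T+1).
\]
Burkholder--Davis--Gundy then bounds $\mathbb E[\sup_{t\le T}|M^\epsilon_{t\wedge\sigma^\epsilon}|^4]$ uniformly in $\epsilon$, which gives the uniform integrability you need. Alternatively, the version phrased with the optional bracket, $[M^\epsilon]_t\to C_t$, needs only $\mathbb E[\sup_s|\Delta M^\epsilon_s|]\to0$.

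So the paper's use of the proposition in Corollary \ref{cor:conv_mart} is unaffected. Your argument for the statement as written, however, cannot be completed without strengthening ii).
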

From the previous result we conclude that:
\begin{corollary}\label{cor:conv_mart}
For $H\in\mathbb S$ the sequence of martingales
${M_t^\epsilon(H);t\in [0,T]}{\epsilon}$ converges in the topology of
$\mathcal D([0,T],\mathbb R)$, as $\epsilon\to 0$, towards a mean-zero Gaussian process
$W_t(H)$ with quadratic variation given by
\begin{equation*}
\int_0^t
\|H\|_{L^2(\rho_s)}^2\,ds,
\end{equation*}
where $\rho(u,s)$ is the solution of the hydrodynamic equation
\eqref{eq:Robin_equation} and the $\mathbb L^2$-norm is defined in
\eqref{eq:norm}.
\end{corollary}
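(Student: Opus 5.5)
We verify the three hypotheses of Proposition \ref{jacod_shir} for $M^\epsilon_t(H)$ defined in \eqref{martingaleM}, with $C_t=\int_0^t\|H\|^2_{L^2(\rho_s)}ds$. The quadratic variation is $\langle M^\epsilon(H)\rangle_t=\int_0^t\Gamma^\epsilon_s(H)ds$, a time integral of a bounded function of $\eta_s$. It is therefore continuous in $t$, which gives i). For ii), each jump of the process changes either one occupation variable (birth/death) or two (stirring). Each such change moves $Y^\epsilon_t(H)$ by at most $2\sqrt\epsilon\|H\|_\infty$, while the integral terms are continuous. Hence $\sup_s|M^\epsilon_s-M^\epsilon_{s-}|\leq 2\sqrt\epsilon\|H\|_\infty\to0$ deterministically.

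The substance is iii). We first compute $\Gamma^\epsilon_s(H)$ explicitly using the carré du champ of each piece of the generator. The stirring part gives
\[
\frac{\epsilon^{-2}}{2}\sum_{x=-N}^{N-1}\epsilon\big(H(\epsilon(x+1))-H(\epsilon x)\big)^2(\eta_s(x+1)-\eta_s(x))^2,
\]
which equals $\frac{\epsilon}{2}\sum_x(\nabla_\epsilon H(\epsilon x))^2(\eta_s(x+1)-\eta_s(x))^2$ up to $O(\epsilon)$. The boundary part gives $\frac j2\sum_{x\in I_\pm}H(\epsilon x)^2D_\pm\eta_s(x)$, because $(\eta^{(x)}(x)-\eta(x))^2=1$ and the rate is $\epsilon^{-1}\frac j2 D_\pm\eta(x)$ times $\epsilon$. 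By a Taylor expansion, the boundary contribution is $\frac j2H(\pm1)^2\sum_{x\in I_\pm}D_\pm\eta_s(x)+O(\epsilon)$.

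We then show $L^2(\mathbb P_{\mu^\epsilon})$ convergence of $\int_0^t\Gamma^\epsilon_sds$ to $C_t$, splitting into mean and variance.

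For the mean, we write $(\eta(x+1)-\eta(x))^2=\eta(x+1)+\eta(x)-2\eta(x)\eta(x+1)$. We expand the expectations through $\rho_\epsilon$ and the $v$-functions: $\mathbb E[\eta_s(x)\eta_s(x+1)]=\rho_\epsilon(x,s)\rho_\epsilon(x+1,s)+\rho_\epsilon v_1+\rho_\epsilon v_1+v_2$. Theorem \ref{thm:vestimate2} gives $|v_1|+|v_2|\lesssim\epsilon^{1-2\zeta}$. Theorem \ref{hydrodynamiclimit} gives $\rho_\epsilon(x,s)\to\rho(\epsilon x,s)$ uniformly for $s\geq t_0$, and the region $s<t_0$ contributes $O(t_0)$ since everything is bounded. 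Together these give a bulk limit $\int_0^t\int\chi(\rho(u,s))(\partial_uH)^2du\,ds$, using $2\rho-2\rho^2=2\chi(\rho)$ and the prefactor $\frac12$. For the boundary, we expand $\mathbb E[D_+\eta_s(x)]$ as a product of $\rho_\epsilon$'s plus $v_n$-corrections, all bounded via Theorem \ref{thm:vestimate2}. The limit is $\sum_{x\in I_+}(1-\rho)\rho^{N-x}$ evaluated at $\rho=\rho(1,s)$, and this sum telescopes to $1-\rho(1,s)^K$. Hence the boundary contribution is $\frac12\widetilde D_+(\rho(1,s))H(1)^2$, and symmetrically at $-1$. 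This matches \eqref{eq:norm}. Here we also need $\rho_\epsilon(x,s)-\rho_\epsilon(N,s)\to0$ for $x\in I_+$, which follows from Corollary \ref{corollary: difference of rhos} since $K$ is fixed.

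For the variance, the bulk part is the hard step, where I expect the main obstacle. We must show
\[
\mathbb E\Big[\Big(\int_0^t\epsilon\sum_x(\nabla_\epsilon H)^2\big(f_x(\eta_s)-\mathbb E f_x(\eta_s)\big)ds\Big)^2\Big]\to0,
\qquad f_x=(\eta(x+1)-\eta(x))^2.
\]
We will center $f_x$ in terms of the variables $\bar\eta$. This writes it as a linear combination of $\bar\eta_s(x)$, $\bar\eta_s(x+1)$ and $\bar\eta_s(x)\bar\eta_s(x+1)$, plus deterministic $v$-terms that cancel. The square then becomes $\epsilon^2\sum_{x,y}\int\int$ of space-time $v$-functions $v^{m,n}_{s,r}$ with $m,n\in\{1,2\}$. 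By Theorem \ref{thm:vestimate3} these are $\lesssim\epsilon(r-s)^{-1/2}$ or better, which is integrable. Since $\epsilon^2\cdot N^2=O(1)$, this yields a bound $O(\epsilon)$. The boundary variance involves only $O(K)$ sites with an $O(1)$ prefactor. Its square is bounded by $\int\int$ of space-time $v$-functions of degree at most $2K$, again small by Theorem \ref{thm:vestimate3}. The diagonal terms $x=y$, where the sites coincide, are handled by the trivial bound $1$ times the $\epsilon^2 N$ counting factor. Combining mean and variance gives iii), and Proposition \ref{jacod_shir} yields the claim.
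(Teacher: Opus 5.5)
Your proof is correct. Items i) and ii) coincide with the paper's argument, but for iii) you take a genuinely different route.

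\emph{The paper's route.} It computes $\Gamma^\epsilon_s(H)$ and rewrites the boundary sums via $\sum_{x\in I_+}D_+\eta(x)=1-\prod_{x\in I_+}\eta(x)$ and its analogue on $I_-$. It then invokes the Replacement Lemma and the hydrodynamic limit of \cite{erignoux2020hydrodynamics} to get convergence in distribution of $\int_0^t\Gamma^\epsilon_s(H)\,ds$. Convergence in probability follows because the limit is deterministic.

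\emph{Your route.} You prove $\mathbb L^2(\mathbb P_{\mu^\epsilon})$ convergence directly. The mean is controlled by Theorem~\ref{thm:vestimate2} together with the uniform convergence \eqref{eq:hydro_conv}. Your telescoping sum $\sum_{k=0}^{K-1}(1-\rho)\rho^k=1-\rho^K$ plays the role of the paper's product identity. The variance is shown to be $O(\epsilon)$ via Theorem~\ref{thm:vestimate3}. For this you expand the centred observables in the monomials $\bar\eta_s(x)$, $\bar\eta_s(x+1)$, $\bar\eta_s(x)\bar\eta_s(x+1)$ in the bulk, and in monomials of degree at most $K$ on $I_\pm$ at the boundary.

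\emph{What each buys.} Your argument is self-contained within the paper's correlation machinery and gives a quantitative rate. It also treats the bulk quadratic term $\eta_s(x)\eta_s(x+1)$ explicitly rather than delegating it to a replacement lemma. The paper's argument needs only that $\mu^\epsilon$ be associated with a profile as in \eqref{eq:mea_associated}. Yours inherits Assumption 1 and $K\geq 2$ from the $v$-estimates, though these are in force throughout the paper anyway.

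\emph{Two points of precision.} First, for $(m,n)=(2,2)$, Theorem~\ref{thm:vestimate3} gives $\epsilon^{1+\zeta}(r-s)^{-1+\zeta}$. This is not pointwise below $\epsilon(r-s)^{-1/2}$, but it is integrable, with time integral of order $\epsilon^{1+\zeta}/\zeta$, so your $O(\epsilon)$ bound stands. Second, the deterministic $v$-terms do not literally cancel. You may drop them because a variance is bounded by the second moment about any constant, or simply because they are $O(\epsilon^{1-2\zeta})$.
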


\begin{proof}
The proof of Proposition \ref{jacod_shir} is a simple consequence of  \cite[Theorem VIII.3.12]{jacod2013limit}. Now we verify the three conditions of Proposition \ref{jacod_shir}.
First,  we observe that ii) is a consequence of the fact that  
\begin{equation*}
\lim_{\epsilon\to 0} \mathbb E^\epsilon_{\mu^\eps}\Big[ \sup_{s\le t} \big| M^\epsilon_s(H) -
M^\epsilon_{s^-}(H)\big| \Big]\;=\; 0,
\end{equation*}
since at each time of a ring of a clock of a Poisson process, the configuration $\eta$ only changes its value in at most two sites, which in turn implies that 
\begin{equation*}
\lim_{\epsilon\to 0}\sup_{s\le t} \big| M_s^\epsilon(H) -
M^\epsilon_{s^-}(H)\big| =\lim_{\epsilon\to 0}\sup_{s\le t} \big| Y_s^\epsilon(H) -
Y^\epsilon_{s^-}(H)\big| \leq \lim_{\epsilon\to 0} \sqrt \epsilon 2 \|H\|_\infty=0.
\end{equation*}
For items i) and iii), we note that from \eqref{quadratic}, the quadratic variation of the martingale $M_s^\epsilon(H)$ is given by 
$\int_0^t \Gamma^\epsilon_s(H)ds$ and 
a simple computation shows  that 
\begin{equation*}
\begin{split}
\Gamma_s^\epsilon(H)=\frac{\epsilon}{2} \sum_{x=-N}^{N-1} (\nabla^+_\epsilon H(\epsilon x))^2 (\eta_{s}(x)-\eta_{s}(x+1))^2+\frac{j}{2}\sum_{x\in I_\pm}D_\pm\eta_{s}(x) (H(\epsilon x))^2.
\end{split}
\end{equation*}
Item i) follows from the fact that the number of particles is bounded  and from the observation that the integral in time of a bounded function is a continuous function  $\mathbb P$ a.s.   
To analyse the convergence, we observe that, 
\begin{equation}\label{eq:boundary_terms_new}
 \sum_{x\in I_+ } D_+  \eta (x)=1{-}\prod_{x\in I_+}\eta (x)
\end{equation}
and 
\begin{equation}\label{eq:boundary_terms_new_1}
 \sum_{x\in I_- } D_-  \eta (x)=\sum_{j=1}^K(-1)^{j+1}\sum_{A_j\subset I_-} \prod_{x\in A_j}\eta(x),
\end{equation}
where for $j=1,\dots, K$ the sets $A_j\subset I_-$ with cardinality $j
$. Since the measure $\mu^\epsilon $ satisfies \eqref{eq:mea_associated}, then, from the Replacement Lemma proved in  \cite[Lemma A.3]{erignoux2020hydrodynamics} and by the hydrodynamic limit proved in  \cite[Theorem 2.7]{erignoux2020hydrodynamics}  we conclude that 
$
\int_{0}^t\Gamma_s^\epsilon( H)ds,
$
converges in distribution, as $\epsilon\to 0$, to 
\begin{equation*}\label{eq:covariance}
\begin{split}
 & \int_0^t \int_{-1}^1\chi(\rho_s(u)) \big(\partial_u H(u)\big)^2du \; ds \\
  +&\int_0^t \left\{\frac{j}{2}(1-\rho_s(1)^{K})(H(1))^2+ \frac{j}{2}\Big[1-(1-\rho_s(-1))^{K}\Big](H(-1))^2\right\}\,ds =\int_0^t\|H\|_{L^2(\rho_s)}^2ds.
\end{split}
\end{equation*}
Since the last expression is deterministic, the convergence holds in probability. Hence condition iii) of Proposition~\ref{jacod_shir} is satisfied. Therefore, conditions i)--iii) of Proposition~\ref{jacod_shir} hold, which concludes the proof.
\end{proof}

\appendix
\section{Appendix}
In this appendix we collect several computations that have been used throughout the article. We begin by presenting explicit calculations concerning the action of the generator on the occupation variables.
\subsection{The action of the generator}\label{ap:action_gen}
A simple computation shows that for $x=-(N-1),\dots, N-1$$$L_0\eta(x)= \frac 12\Big(\eta(x+1)+\eta(x-1)-2\eta(x)\Big)$$ and 
$L_0\eta(N)= \frac 12\Big(\eta(N-1)-\eta(N)\Big)$ and $L_0\eta(-N)=\frac 12\Big(\eta(-(N-1))-\eta(-N)\Big)$. To be consistent with the notation introduced above, see \eqref{def_laplacian} we write $L_0\eta(x)=\frac 12\Delta \eta(x). $
Now, for  the boundary action, we note that a simple computation shows that
$L_{b,\pm}\eta(x)=\frac{j}{2}D_\pm\eta(x)(1-2\eta(x))$
for $x\in I_\pm$.
\smallskip

{\textbf{The discrete profile:}}
Recall \eqref{eq:exp_eta}.
From the previous computations and Kolmogorov's equation we  can conclude that 
\begin{equation}\label{eq:kolm_disc_prof}
\partial_t \rho_t^\epsilon (x)=\frac 12\Delta_\epsilon  \rho_t^\epsilon (x)+\frac {\epsilon^{-1}j}{2} \textbf{1}_{x\in I_{+}}\mathbb E^\epsilon_{\mu^\eps}[D_+\eta_{t}(x)]- \frac {\epsilon^{-1}j}{2} \textbf{1}_{x\in I_{-}}\mathbb E^\epsilon_{\mu^\eps}[D_-\eta_{t}(x)].
\end{equation}

{\textbf{The action on the density field:}}
Now we compute the action of the generator on the density fluctuation field. A simple computation shows that 
$$L_0Y^\epsilon _s(H) =\frac{\sqrt\epsilon}{2}  \sum_{x\in \Lambda_N} H(\epsilon x) \Delta \eta_{s} (x)=\frac{\sqrt \epsilon}{2} \sum_{x\in \Lambda_N} \Delta H(\epsilon x) \eta_{s} (x),
$$
$$L_{b,\pm}Y^\epsilon _s(H) =\pm\sqrt \epsilon \frac j2 \sum_{x\in I_\pm } H(\epsilon x) D_\pm  \eta_{s} (x).
$$
Recall \eqref{eq:gen_total}, then
$$L_\epsilon Y^\epsilon _s(H) =\frac{\sqrt \epsilon}{2} \sum_{x\in \Lambda_N} 
\Delta_\epsilon H(\epsilon x) \eta_{s} (x)+\frac {j}{2\sqrt \epsilon} \sum_{x\in I_\pm } H(\epsilon x) D_\pm  \eta_{s} (x).$$
Moreover, from \eqref{eq:kolm_disc_prof} we see that

\begin{equation*}
\begin{split}\partial_s Y^\epsilon _s(H) &=-\frac{\sqrt{\epsilon}}{2}\sum_{x\in \Lambda_N} H(\epsilon x) \Delta_\epsilon  \rho_s^\epsilon (x)\\&-\frac{j}{2\sqrt \epsilon}\sum_{x\in I_+} H(\epsilon x)\mathbb E^\epsilon_{\mu^\eps}[D_+\eta_{s}(x)]+ \frac{j}{2\sqrt \epsilon}\sum_{x\in I_-} H(\epsilon x)\mathbb E^\epsilon_{\mu^\eps}[D_-\eta_{s}(x)].\end{split}\end{equation*}

By a summation by parts last expression becomes 
\begin{equation*}
\begin{split}\partial_s Y^\epsilon _s(H) &=-\frac{\sqrt{\epsilon}}{2}\sum_{x\in \Lambda_N} \Delta_\epsilon H(\epsilon x)  \rho_s^\epsilon (x)\\&-\frac{j}{2\sqrt \epsilon}\sum_{x\in I_+} H(\epsilon x)\mathbb E^\epsilon_{\mu^\eps}[D_+\eta_{s}(x)]+ \frac{j}{2\sqrt \epsilon}\sum_{x\in I_-} H(\epsilon x)\mathbb E^\epsilon_{\mu^\eps}[D_-\eta_{s}(x)]. \end{split}\end{equation*}

\subsection{Treating boundary terms}\label{ap:boundary_terms}
Recall that, just below \eqref{eq:boundary_term_final}, and in order to give a more transparent presentation for the closure of the martingale, we presented it for the case $K=2$. Below, and for completeness, we present the computations for general $K$. To that end,  recall  the form of $D_+$ given in \eqref{eq:boundary_operators}. For any $s\in[0,T]$, we have the identity
\begin{equation}\label{eq:boundary_terms}
 \sum_{x\in I_+ } \Big(D_+  \eta_s (x)-\mathbb E^\epsilon_{\mu^\eps}[D_+\eta_s(x)]\Big)=-\Big(\prod_{x\in I_+}\eta_s (x)-\mathbb{E}^\eps_{\mu^\epsilon}\Big[\prod_{x\in I_+}\eta_s (x)\Big]\Big).
\end{equation}

For $j=1,\dots, K$, denoting by $A_j$ the subsets of $I_+$ with cardinality equal to $j$, we get 
\begin{equation*}\label{eq:prod_eta}\begin{split}
 \prod_{x\in I_+}\eta (x)&=\sum_{j=1}^K\sum_{A_j\subset I_+} \Big\{\prod_{x\in A_j} \bar\eta_s(x)\prod_{y\in A_j^c}\rho_\epsilon(y,s)\Big\}+\prod_{x\in I_+}\rho_\epsilon(x,s)\\&=\sum_{x\in I_+}  \Big\{\bar\eta_s(x)\prod_{y\in I_+\setminus \{x\}}\rho_\epsilon(y,s)\Big\}+\sum_{j=2}^K\sum_{A_j\subset I_+} \Big\{\prod_{x\in A_j} \bar\eta_s(x)\prod_{y\in A_j^c}\rho_\epsilon(y,s)\Big\}+\prod_{x\in I_+}\rho_\epsilon(x,s).\end{split}
\end{equation*}
Now we focus on the  leftmost term on the last line of last display and we rewrite it as 
\begin{equation*}\begin{split}
&\sum_{x\in I_+\setminus\{N\}}  \Big\{\bar\eta_s(x)\prod_{y\in I_+\setminus \{x\}}\rho_\epsilon(y,s)\Big\}+\bar\eta_s(N)\prod_{y\in I_+\setminus\{N\}}\rho_\epsilon(y)\\=&\sum_{x\in I_+\setminus\{N\}}  \Big\{(\bar\eta_s(x)-\bar\eta_s(N))\prod_{y\in I_+\setminus \{x\}}\rho_\epsilon(y,s)\Big\}+\bar\eta_s(N)\sum_{x\in I_+}  \prod_{y\in I_+\setminus \{x\}}\rho_\epsilon(y,s).\end{split}
\end{equation*}
We write the rightmost term of the last display as
\begin{equation*}
\begin{split}
&\bar\eta_s(N)\sum_{x\in I_+}
\prod_{y\in I_+\setminus\{x\}}\rho_\epsilon(y,s)\\
&=K\bar\eta_s(N)(\rho_\epsilon(N,s))^{K-1}
+\bar\eta_s(N)R_\epsilon(s),
\end{split}
\end{equation*}
where
\begin{equation}\label{eq:R_epsilon}
R_\epsilon(s):=
\sum_{x\in I_+}
\left[
\prod_{y\in I_+\setminus\{x\}}\rho_\epsilon(y,s)
-(\rho_\epsilon(N,s))^{K-1}
\right].
\end{equation}
Since $0\leq\rho_\epsilon(y,s)\leq 1$, the
telescopic product identity gives
\begin{equation*}
\begin{split}
|R_\epsilon(s)|
&\leq
\sum_{x\in I_+}
\sum_{y\in I_+\setminus\{x\}}
|\rho_\epsilon(y,s)-\rho_\epsilon(N,s)|\\
&=(K-1)\sum_{y\in I_+}
|\rho_\epsilon(y,s)-\rho_\epsilon(N,s)|.
\end{split}
\end{equation*}
By Corollary~\ref{corollary: difference of rhos},
for fixed $K$ and $0\leq s\leq T$,
\begin{equation}\label{eq:R_epsilon_bound}
\sup_{0\leq s\leq T}|R_\epsilon(s)|
\leq C_{T,K}\epsilon^{1-2\zeta}.
\end{equation}

Putting together all the previous computations we get 
\begin{equation*}\begin{split}
 \prod_{x\in I_+}\eta_s (x)&=\sum_{j=2}^K\sum_{A_j\subset I_+} \Big\{\prod_{x\in A_j} \bar\eta(x)\prod_{y\in A_j^c}\rho_\epsilon(y,s)\Big\}+\prod_{x\in I_+}\rho_\epsilon(x,s)\\&+\sum_{x\in I_+\setminus\{N\}}  \Big\{(\bar\eta_s(x)-\bar\eta_s(N))\prod_{y\in I_+\setminus \{x\}}\rho_\epsilon(y,s)\Big\}\\&+\bar\eta_s(N)R_\epsilon(s)+K\bar\eta_s(N)(\rho_\epsilon(N,s))^{K-1}.\end{split}
\end{equation*}
From this we conclude that  
\begin{equation}\label{eq:prod_eta_final}\begin{split}
 \prod_{x\in I_+}\eta_s(x)&=\sum_{j=2}^K\sum_{A_j\subset I_+} \Big\{\prod_{x\in A_j} \bar\eta_s(x)\prod_{y\in A_j^c}\rho_\epsilon(y,s)\Big\}+\prod_{x\in I_+}\rho_\epsilon(x,s)\\&+\sum_{x\in I_+\setminus\{N\}}  \Big\{(\bar\eta_s(x)-\bar\eta_s(N))\prod_{y\in I_+\setminus \{x\}}\rho_\epsilon(y,s)\Big\}\\&+\bar\eta(N)R_\epsilon(s)+K\bar\eta_s(N)(\rho_\epsilon(N,s))^{K-1}.\end{split}
\end{equation}
Observe that the expectation of the last display is equal to 
\begin{equation}\label{eq:exp_prod_eta_final}\begin{split}
\mathbb E^\epsilon_{\mu^\eps}\Big[ \prod_{x\in I_+}\eta_s (x)\Big]&=\sum_{j=2}^K\sum_{A_j\subset I_+} \Big\{\mathbb  E^\epsilon_{\mu^\eps}\Big[\prod_{x\in A_j} \bar\eta_s(x)\Big]\prod_{y\in A_j^c}\rho_\epsilon(y,s)\Big\}+\prod_{x\in I_+}\rho_\epsilon(x,s)
\\&+\sum_{x\in I_+\setminus\{N\}}  \Big\{(\mathbb E^\epsilon_{\mu^\eps}[\bar\eta_s(x)]-\mathbb E^\epsilon_{\mu^\eps}[\bar\eta_s(N)])\prod_{y\in I_+\setminus \{x\}}\rho_\epsilon(y,s)\Big\}\\&+\mathbb E^\epsilon_{\mu^\eps} [\bar\eta_s(N)]R_\epsilon(s)+K\mathbb E^\epsilon_{\mu^\eps}[\bar\eta_s(N)](\rho_\epsilon(N,s))^{K-1}.
\end{split}
\end{equation}
Putting together \eqref{eq:prod_eta_final} and \eqref{eq:exp_prod_eta_final} we see that 
\begin{equation}\label{eq:boundary_terms_final}\begin{split}
 &\sum_{x\in I_+ } \Big(D_+  \eta_s (x)-\mathbb E^\epsilon_{\mu^\eps}[D_+\eta_s(x)]\Big)\\
=-&\sum_{j=2}^K\sum_{A_j\subset I_+} \Big\{\prod_{x\in A_j} \bar\eta_s(x)\prod_{y\in A_j^c}\rho_\epsilon(y,s)\Big\}+\sum_{j=2}^K\sum_{A_j\subset I_+} \Big\{\mathbb E^\epsilon_{\mu^\eps}\Big[\prod_{x\in A_j} \bar\eta_s(x)\Big]\prod_{y\in A_j^c}\rho_\epsilon(y,s)\Big\}
\\-&\sum_{x\in I_+\setminus\{N\}}  \Big\{(\bar\eta_s(x)-\bar\eta_s(N))\prod_{y\in I_+\setminus \{x\}}\rho_\epsilon(y,s)\Big\}\\+&\sum_{x\in I_+\setminus\{N\}}  \Big\{(\mathbb E^\epsilon_{\mu^\eps}[\bar\eta_s(x)]-\mathbb E^\epsilon_{\mu^\eps}[\bar\eta_s(N)])\prod_{y\in I_+\setminus \{x\}}\rho_\epsilon(y,s)\Big\}
\\-&\bar\eta_s(N)R_\epsilon(s)-K\bar\eta_s(N)(\rho_\epsilon(N,s))^{K-1}\\+&\mathbb E^\epsilon_{\mu^\eps} [\bar\eta_s(N)]R_\epsilon(s)+K\mathbb E^\epsilon_{\mu^\eps}[\bar\eta(N,s)](\rho_\epsilon(N,s))^{K-1}.
\end{split}
\end{equation}
For the left boundary the computations are completely analogous and for that reason they are omitted.
Now we are back to \eqref{eq:boundary_term_final}. 
From the above computations,  the leftmost term in \eqref{eq:boundary_term_final} can be rewritten as  
\begin{equation*}
\begin{split}
&\underbrace{-\frac{j}{2\sqrt \epsilon} H_s(1)\sum_{j=2}^K\sum_{A_j\subset I_+} \Big\{\prod_{x\in A_j} \bar\eta_{s}(x)\prod_{y\in A_j^c}\rho_\epsilon(y,s)\Big\}}_{\textbf{A}}\\+&\underbrace{\frac{j}{2\sqrt \epsilon} H_s(1)\sum_{j=2}^K\sum_{A_j\subset I_+} \Big\{v_{|A_j|}^{\epsilon}(A_j,s)\prod_{y\in A_j^c}\rho_\epsilon(y,s)\Big\}}_{\textbf{B}}\\-&\underbrace{\frac{j}{2\sqrt \epsilon} H_s(1)\sum_{x\in I_+\setminus\{N\}}  \Big\{(\bar\eta_{s}(x)-\bar\eta_{s}(N))\prod_{y\in I_+\setminus \{x\}}\rho_\epsilon(y,s)\Big\}}_{\textbf{C}}\\
+&\underbrace{{\frac{j}{2\sqrt \epsilon} H_s(1)\sum_{x\in I_+\setminus\{N\}}  \Big\{(\mathbb E^\epsilon_{\mu^\eps}[\bar\eta_s(x)]-\mathbb E^\epsilon_{\mu^\eps}[\bar\eta_s(N)])\prod_{y\in I_+\setminus \{x\}}\rho_\epsilon(y,s)\Big\}}}_{\textbf{D}}\\
-&\underbrace{\frac{j}{2\sqrt \epsilon} H_s(1)\bar\eta_{s}(N)R_\epsilon(s)}_{\textbf{E}}\\-&\underbrace{\frac{j}{2\sqrt \epsilon} H_s(1)K\bar\eta_{s}(N)(\rho_\epsilon(N,s))^{K-1}}_{\textbf{F}}.
\end{split}
\end{equation*}
We can obtain analogous identities with respect to the left boundary but we omit them. 
Observe that the square of the  $L^2$-norm of the time integral of  {\textbf{A}} can be bounded from above by a constant 
$C_{j,H,K}\epsilon ^\zeta(t^{1+\zeta}\mathbf 1_{t<1}+t^2 \mathbf 1_{t\geq 1})$ for some $\zeta>0$, using Theorem \ref{thm:vestimate3}. Indeed,  since $|\rho_\epsilon(s,y)|\leq 1$ for all $s$ and $y$, we have that
\begin{equation*}
\begin{split}
&\mathbb E^\epsilon_{\mu^\eps} \Big[\Big(\int_0^t\frac{j}{2\sqrt \epsilon} H_s(1)\sum_{j=2}^K\sum_{A_j\subset I_+} \Big\{\prod_{x\in A_j} \bar\eta_{s}(x)\prod_{y\in A_j^c}\rho_\epsilon(y,s)\Big\}\Big)^2\Big]\\
\leq &\sum_{j=2}^K\sum_{A_j\subset I_+} \frac{C_{j,H}}{\epsilon} \int_0^t\int_0^r \mathbb E^\epsilon_{\mu^\eps} \Big[\prod_{x\in A_j}\bar \eta_{s}(x)\prod_{\tilde x\in A_j}\bar \eta_{r}(\tilde x)\Big] dsdr\leq C_{j,H,K}\epsilon^\zeta(t^{1+\zeta}\mathbf 1_{t<1}+t^2 \mathbf 1_{t\geq 1}).
\end{split}
\end{equation*}
Analogously, from Theorem \ref{thm:vestimate3} the $L^2$-norm of the time integral of the term {\bf{B}}  can be bounded from above by $\epsilon$ to some positive exponent. From Corollary \ref{cor:vestimate3_difference}, the $\mathbb L^2$-norm of the time integral of the term {\bf{C}} can be bounded from above by $C_{j,H,K}\epsilon^{1-4\zeta}(t^{1+\zeta}\mathbf 1_{t<1}+t^2 \mathbf 1_{t\geq 1})$ for some $\zeta>0$.  {Since $\mathbb E^\epsilon_{\mu^\eps}[\bar\eta_s(x)]=v^{\eps}_1(x,s)$ and $\mathbb E^\epsilon_{\mu^\eps}[\bar\eta_s(N)]=v^{\eps}_1(N,s)$, by Theorem \ref{thm:vestimate2}, the time integral of the term {\bf{D}} is bounded as follows
$$\int_{0}^t\frac{j}{2\sqrt \epsilon} H_s(1)\sum_{x\in I_+\setminus\{N\}}  \Big\{(\mathbb E^\epsilon_{\mu^\eps}[\bar\eta_s(x)]-\mathbb E^\epsilon_{\mu^\eps}[\bar\eta_s(N)])\prod_{y\in I_+\setminus \{x\}}\rho_\epsilon(y,s)\Big\}ds\leq C_{j,H,K}\frac{\eps^{1-2\zeta}}{\sqrt{\eps}}t,$$
where we use $\prod_{y\in I_+\setminus \{x\}}|\rho_\epsilon(s,y)|\leq 1$.} Finally, 
to compute the $ L^2$-norm of the time integral of the term {\bf{E}} we use  Corollary  C.2 of \cite{birmpa2025nonequilibriumfluctuationsstirringprocess} which says that:

\begin{corollary}\label{corollary: difference of rhos}
Let $x\in \{-N,\dots, N\}$ and $0\leq t\leq T$. There exists a constant $C_T$ which depends on $T$ such that for $0<\eps<1$
\begin{eqnarray*}
\sup_{x,y\in\Lambda_N:|x-y|\leq 1}|\rho_\epsilon(x,t)-\rho_\epsilon(y,t)|\leq C_T \epsilon^{1-2\zeta}
\end{eqnarray*}
\end{corollary}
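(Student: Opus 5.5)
The plan is to use the Duhamel (mild) representation of \eqref{eq:linearized} and to take a discrete gradient in the spatial variable. Let $p_\tau(x,z)$, $x,z\in\Lambda_N$, be the transition kernel of the continuous-time simple random walk on $\Lambda_N$ with reflecting boundary conditions. This is the semigroup generated by $\frac12\Delta$, with $\Delta$ as in \eqref{eq:disc_laplacian}. In the time scale of \eqref{eq:linearized} the kernel is evaluated at $\epsilon^{-2}\tau$. Duhamel's formula gives
\begin{equation*}
\rho_\epsilon(x,t)=\sum_{z\in\Lambda_N}p_{\epsilon^{-2}t}(x,z)u_0(\epsilon z)+\frac{j}{2\epsilon}\int_0^t\Big(\sum_{z\in I_+}p_{\epsilon^{-2}(t-s)}(x,z)D_+\rho_\epsilon(z,s)-\sum_{z\in I_-}p_{\epsilon^{-2}(t-s)}(x,z)D_-\rho_\epsilon(z,s)\Big)ds .
\end{equation*}
I would first check that $0\le\rho_\epsilon\le1$ for all times, by a maximum-principle argument for \eqref{eq:linearized}: at $\rho_\epsilon(x)=1$ the $D_+$ term vanishes and the $D_-$ term has the correct sign, and symmetrically at $0$. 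Hence $0\le D_\pm\rho_\epsilon\le1$.

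Next fix $y=x+1$ and take differences term by term. For the initial term I would use that the discrete gradient intertwines the reflecting walk with the walk on bonds absorbed at the ends. Consequently
\begin{equation*}
\nabla\big(P_{\epsilon^{-2}t}u_0(\epsilon\,\cdot)\big)=Q_{\epsilon^{-2}t}\nabla u_0(\epsilon\,\cdot),
\end{equation*}
where $Q$ is a sub-Markov (absorbed) semigroup. This yields the bound $\epsilon\|u_0'\|_\infty$ uniformly in $t$. This step is where the smoothness of $u_0$ in \textbf{Assumption 1} enters. The boundary term is bounded by
\begin{equation*}
\frac{j}{2\epsilon}\,K\int_0^t\sup_{z}\big|p_{\epsilon^{-2}\tau}(x+1,z)-p_{\epsilon^{-2}\tau}(x,z)\big|\,d\tau .
\end{equation*}

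The key input is the gradient bound for the reflecting kernel,
\begin{equation*}
\sup_{x,z}|p_\sigma(x+1,z)-p_\sigma(x,z)|\lesssim \frac{1}{1+\sigma}+\frac{1}{N},
\end{equation*}
uniformly in $\sigma\le \epsilon^{-2}T$. The $1/N$ accounts for the fact that the kernel converges to the uniform distribution, whose gradient is zero, so this term is in fact harmless. I would prove it by the method of images: write $p_\sigma$ on $\Lambda_N$ as a periodized sum of free kernels $G_\sigma$ on $\mathbb Z$ over reflected images. Then use the local-CLT gradient estimate $|G_\sigma(w+1)-G_\sigma(w)|\lesssim (1+\sigma)^{-1}e^{-cw^2/\sigma}$, and sum over images. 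The image sum gives at most an $O(1+\sqrt\sigma/N)$ number of effective contributions, each of size $\sigma^{-1}$.

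Plugging in $\sigma=\epsilon^{-2}\tau$, the boundary contribution becomes
\begin{equation*}
\epsilon^{-1}\int_0^t\frac{d\tau}{1+\epsilon^{-2}\tau}\lesssim \epsilon\log(1+\epsilon^{-2}T)\le C_T\epsilon^{1-2\zeta},
\end{equation*}
for any $\zeta>0$ once $\epsilon<1$. A possible extra piece from the image sum of order $\epsilon^{-1}\int_0^t\epsilon^{2}\tau^{-1/2}\,d\tau\lesssim\epsilon\sqrt T$ is also harmless. Adding the two contributions gives the claim, uniformly in $t\le T$, including the endpoint pairs near $\pm N$.

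The main obstacle is the uniform gradient estimate on the reflecting kernel near the boundary sites in $I_\pm$, where the points $x$ and $z$ may both be at distance $O(1)$ from the wall. There the image terms do not decay in space. The plan there is to pair each image with its reflection so that the gradient of the pair is still a difference of free kernels at neighbouring points, to which the free gradient bound applies. The logarithm lost in the time integral is exactly why the statement carries $\epsilon^{1-2\zeta}$ rather than $\epsilon$.
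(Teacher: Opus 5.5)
Your proposal is correct and follows essentially the same route as the proof the paper relies on (it imports the statement from the companion article rather than reproving it): Duhamel's formula for \eqref{eq:linearized}, an $O(\epsilon)$ bound on the gradient of the initial term from the smoothness of $u_0$, and a kernel-gradient bound of order $(1+\sigma)^{-1}$, which turns the boundary contribution into $\epsilon^{-1}\int_0^t (1+\epsilon^{-2}\tau)^{-1}\,d\tau\lesssim \epsilon\log(1+\epsilon^{-2}T)\le C_T\epsilon^{1-2\zeta}$. One slip to fix: the additive $1/N$ in your displayed key bound is \emph{not} harmless, since $\frac{jK}{2\epsilon}\int_0^t N^{-1}\,d\tau=\frac{jK}{2}\,t$ is of order one; what your image sum actually gives is $(1+\sigma)^{-1}(1+\sqrt{\sigma}/N)$, i.e.\ the extra piece $1/(N\sqrt{\sigma})$ that you do integrate correctly at the end (in fact $(1+\sigma)^{-1}$ alone holds uniformly in $\sigma$ and $N$, e.g.\ by spectral decomposition, and the near-wall case needs no pairing of images, because the free gradient bound is uniform in its argument and only the $2K$ sites of $I_\pm$ enter).
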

We leave the details to the reader but it is enough to follow the same steps as presented above for the case $K=2$.  We now return to \eqref{intpartofmart_2}. Combining the
rightmost term in its second line with the third line,
we obtain
\begin{equation}\label{intpartofmart_3}
\begin{split}
-&\frac{1}{2\sqrt{\epsilon}}
\Big(
\partial_u H_s(1)
+jK\rho_s(1)^{K-1}H_s(1)
\Big)\bar\eta_s(N)
+R_s^\epsilon,
\end{split}
\end{equation}
where
\begin{equation*}
R_s^\epsilon
:=
-\frac{jK}{2\sqrt{\epsilon}}H_s(1)
\Big(
\rho_\epsilon(N,s)^{K-1}
-\rho_s(1)^{K-1}
\Big)\bar\eta_s(N).
\end{equation*}

By the Robin boundary condition
\eqref{eq:Robin_equation_line}, the first term in
\eqref{intpartofmart_3} vanishes identically.

Moreover, since $0\leq \rho_\epsilon(N,s),\rho_s(1)\leq 1$,
we have
\[
\left|
\rho_\epsilon(N,s)^{K-1}
-\rho_s(1)^{K-1}
\right|
\leq
(K-1)
\left|
\rho_\epsilon(N,s)-\rho_s(1)
\right|.
\]
Therefore, applying Theorem \ref{thm:vestimate3}
with $n=m=1$, together with the hydrodynamic convergence
\eqref{eq:hydro_conv}, and arguing as in
Lemma \ref{lem:use_of_hydrodynamic_limit}, we obtain
\[
\lim_{\epsilon\to0}
\mathbb E^\epsilon_{\mu^\epsilon}
\left[
\left(
\int_0^t R_s^\epsilon\,ds
\right)^2
\right]=0.
\]

The analogous contribution from the left boundary
is treated in the same way. This completes the
closure of the martingale for arbitrary fixed $K$.

\bibliographystyle{plain}  
\bibliography{references}

\end{document}